\documentclass[12pt,leqno]{article}
\usepackage[mathscr]{eucal}
\usepackage{amsmath,amssymb,latexsym,theorem,bbm}
\usepackage[english]{babel}
\usepackage{color,url}
\usepackage{soul}

\newcommand{\CC}{\mathbb{C}}

\newcommand{\NN}{\mathbb{N}}

\newcommand{\RR}{\mathbb{R}}

\newcommand{\ZZ}{\mathbb{Z}}

\newcommand{\cA}{{\mathcal A}}

\newcommand{\cD}{{\mathcal D}}

\newcommand{\cF}{{\mathcal F}}

\newcommand{\cN}{{\mathcal N}}

\newcommand{\dd}{\mathrm{d}}
\newcommand{\ee}{\mathrm{e}}
\newcommand{\ii}{\mathrm{i}}

\newcommand{\EE}{\operatorname{\mathbb{E}}}

\newcommand{\PP}{\operatorname{\mathbb{P}}}

\newcommand{\sign}{\operatorname{sign}}

\renewcommand{\Re}{{\operatorname{Re}}}
\renewcommand{\Im}{{\operatorname{Im}}}

\newcommand{\vare}{\varepsilon}

\renewcommand{\mid}{\,|\,}

\renewcommand{\leq}{\leqslant}
\renewcommand{\geq}{\geqslant}

\newcommand{\distr}{\stackrel{\cD}{\longrightarrow}}
\newcommand{\distre}{\stackrel{\cD}{=}}

\newcommand{\bbone}{\mathbbm{1}}

\newcommand{\proofend}{\hfill\mbox{$\Box$}}

\numberwithin{equation}{section}

\theoremstyle{change} \theorembodyfont{\em}
\newtheorem{Lem}{Lemma.}[section]
\newtheorem{Thm}[Lem]{Theorem.}
\newtheorem{Pro}[Lem]{Proposition.}

\newtheorem{Def}[Lem]{Definition.}

\theorembodyfont{\rm}
\newtheorem{Rem}[Lem]{Remark.}
\newtheorem{Ex}[Lem]{Example.}

\def\OnlyOnArXiv#1#2{\ifthenelse{\equal{#1}{Y}}{#2}{}}

\newenvironment{proof}{\noindent{\bf Proof.}}{\proofend}

\allowdisplaybreaks

\begin{document}

\begin{center}
 {\bfseries\Large On variation functions of lacunary series}

\vspace*{3mm}

{\sc\large
  M\'aty\'as $\text{Barczy}^{*,\diamond}$,
  Peter $\text{Kern}^{**}$ }

\end{center}

\vskip0.2cm

\noindent
 * HUN-REN–SZTE Analysis and Applications Research Group,
   Bolyai Institute, University of Szeged,
   Aradi v\'ertan\'uk tere 1, H--6720 Szeged, Hungary.

\noindent
 ** Faculty of Mathematics and Natural Sciences, Heinrich Heine University D\"usseldorf, Universit\"atsstra{\ss}e 1, 
    D-40225 D\"usseldorf, Germany.

\noindent E-mails: barczy@math.u-szeged.hu (M. Barczy),
                   kern@hhu.de (P. Kern).

\noindent $\diamond$ Corresponding author.

\vskip0.2cm

\renewcommand{\thefootnote}{}
\footnote{\textit{2020 Mathematics Subject Classifications\/}:
  Primary 26A16, 42A55. Secondary 26A45, 42A16, 60F99, 60G42.}
\footnote{\textit{Key words and phrases\/}:
 variation function, Fourier series, lacunary series, Weierstrass function, Takagi function, Lipschitz continuity, martingale central limit theorem. }
\vspace*{0.2cm}
\footnote{A part of this research was carried out when M\'aty\'as Barczy visited Heinrich Heine University D\"usseldorf in June and July, 2024
 thanks to a scholarship in the programme Research Stays for University Academics and Scientists, 2024,
 granted by the German Academic Exchange Service (DAAD).}

\vspace*{-10mm}

\begin{abstract}
We study variation functions of some lacunary series
 along the sequence of $b$-adic partitions, where $b>1$ is an integer.
By such a lacunary series, we mean that in the definition of the well-known Weierstrass function, 
 the Lipschitz continuous cosine and sine functions are replaced by a continuous and piecewise continuously differentiable 
 periodic function whose Fourier series has zero $(jb)^{\mathrm{th}}$-Fourier coefficients for all non-zero integers $j$.
In the proofs, we use a probabilistic approach applying a martingale central limit theorem. 
Our result extends some recent results of Han, Schied and Zhang (2021), 
 and can be applied when one chooses the periodic function in question, 
 for example, as the Takagi function, a Weierstrass function or a certain lacunary series itself. 
\end{abstract}


\section{Introduction}
\label{section_intro}

Given a periodic function, studying connections between its Fourier series and its $p$-variation,
where $p\geq 1$, has a long history,
 at least it goes back to Young \cite{You}, who studied  relationships between Fourier series and functions of bounded variation.
Later, Wiener \cite{Wie} investigated the relationship between the quadratic variation of a periodic function and the convergence of its Fourier series approximation.
For Marcinkiewicz's result on the order of Fourier coefficients of a $2\pi$-periodic function 
 having bounded $p$-variation on $[0,2\pi]$, where $p\geq 1$, see, e.g., 
 Dudley and Norvai\v{s}a \cite[Theorem 11.2]{DudNor}.
Without claiming to completeness, we just mention the more recent papers of
 F\"ul\"op and M\'oricz \cite{FulMor} and Ghodadra \cite{Gho} on the order of magnitude of multiple Fourier coefficients of functions
 of bounded variation, and  of bounded $p$-variation having lacunary Fourier series, respectively.
Motivated by Marcinkiewicz's result, Manstavi\v{c}ius \cite{Man} computed the Fourier coefficients of the linear fractional stable motion  
 and of the closely related Riemann–Liouville process, and investigated the rate of their decay
 with the intention of finding the values of $p$ for which the stochastic processes in question
 are of bounded $p$-variation. 
In this paper, instead of $p$-variation, we investigate $\Theta$-variation of some lacunary series that are special Weierstrass-type functions
 along the sequence of $b$-adic partitions, where $b>1$ is an integer and $\Theta$ is the function given in \eqref{def_Theta}.
Our result has been motivated by and extends some recent results of Han et al.\ \cite{HanSchZha} on the Takagi and Weierstrass functions.
In the formulation of the assumptions of our result and in its proof, Fourier series play a crucial role.

Throughout this paper, let $\NN$, $\ZZ_+$, $\ZZ$, $\RR$, $\RR_+$, $\RR_{++}$ and $\CC$ denote the sets of positive integers,
 non-negative integers, integers, real numbers, non-negative real numbers, positive real numbers and complex numbers, respectively.
The indicator function of a subset $A$ of $\RR$ is denoted by $\bbone_A$.
The Dirac measure concentrated at a point $x\in\RR$ is denoted by $\delta_{x}$.
An interval in $\RR$ will be called nondegenerate if it contains at least two distinct points.
All the random variables will be defined on a common probability space $(\Omega,\cA,\PP)$.
Equality and convergence of distributions of random variables is denoted by $\distre$ 
and $\distr$, respectively.

\begin{Def}\label{Def_cont_Theta_variation}
Let $\Theta:[0,1)\to\RR_+$ be a continuous and increasing
 function with $\Theta(0)=0$ and let $f:[0,1]\to\RR$ be a continuous function. 
For a fixed $b\in\NN\setminus \{1\}$, let
 $\Pi_n:=\{jb^{-n}: j=0,1,\ldots,b^n\}$, $n\in\NN$, be the (refining) sequence of $b$-adic partitions of $[0,1]$.
If there exists a continuous function $\langle f \rangle^{\Theta}:[0,1]\to \RR_+$ such that
 \begin{align}\label{Theta_var_Def}
  V^{\Theta,t}_n(f) := \sum_{j=0}^{\lfloor tb^n\rfloor} \Theta(\vert f((j+1)b^{-n}) - f(jb^{-n})\vert)
      \to \langle f \rangle^{\Theta}(t) \qquad \text{as \ $n\to\infty$}
 \end{align}
 for all $t\in[0,1]$, then the function $\langle f \rangle^{\Theta}$ is said to be the continuous $\Theta$-variation function
 of $f$ along the sequence of $b$-adic partitions $\Pi_n$, $n\in\NN$.
\end{Def}

In the sum $V^{\Theta,t}_n(f)$ in \eqref{Theta_var_Def}, the function $f$ defined on $[0,1]$ is formally evaluated at $1+b^{-n}>1$ if $t=1$ and $j=b^n$.
To handle this, we assume here and in the sequel that, for a function
 $f$ defined on $[0,1]$, we extended the domain of $f$ to $\RR_+$ by setting
 $f(t):=f(1)$ for $t>1$.
Note also that, in Definition \ref{Def_cont_Theta_variation}, the function $f$ is uniformly continuous and hence
 there exists $n_0\in\NN$ such that $\vert f((j+1)b^{-n}) - f(jb^{-n})\vert<1$ for all $n\geq n_0$ and
 $j\in\{0,\ldots,b^n\}$. 
Thus $\Theta(\vert f((j+1)b^{-n}) - f(jb^{-n})\vert)$ is well-defined for all $n\geq n_0$ and $j\in\{0,\ldots,b^n\}$.
With the choice of $\Theta(x):=x^p$, $x\in[0,1)$, where $p\geq 1$, the continuous $\Theta$-variation function of $f$ in Definition \ref{Def_cont_Theta_variation} 
 coincides with the usual notion of a continuous $p^\mathrm{th}$-variation function of $f$ along the sequence of $b$-adic partitions $\Pi_n$, $n\in\NN$,
 see, e.g., Cont and Perkowski \cite[Definition 1.1 and Lemma 1.3]{ConPer}.

Throughout this paper, we will use the continuous function $\Theta:[0,1)\to\RR_+$ given by
 \begin{align}\label{def_Theta}
   \Theta(x):=\begin{cases}
              \frac{x}{\sqrt{-\ln(x)}} & \text{if $x\in(0,1)$,}\\
              0 & \text{if $x=0$,}
            \end{cases}
 \end{align}
that has already been considered in Han et al.\ \cite{HanSchZha}. 
Note that $\Theta$ is strictly increasing and strictly convex, since $\Theta'(x)>0$ and 
 $\Theta''(x)>0$ for all $x\in(0,1)$, as can be easily checked.

According to Folland \cite[page 32]{Fol}, 
 we say that a function $\phi:[0,1]\to\RR$ is piecewise continuous if there exist at most finitely many points $0\leq t_1<\cdots<t_k\leq1$ 
 for some $k\in\ZZ_+$ such that $\phi$ is continuous except at the points $t_1,\ldots,t_k$ 
 and the left- and right-hand limits $\phi(t_{\ell}-)$ and $\phi(t_\ell+)$ exist in $\RR$ for all $\ell\in\{1,\ldots,k\}$. 
In case $t_{1}=0$ or $t_{k}=1$, we only require the existence of $\phi(t_{1}+)$ or $\phi(t_{k}-)$ in $\RR$, respectively.
Similarly, we say that a function $\phi:[0,1]\to\RR$ is piecewise continuously differentiable if there exist 
 at most finitely many points $0< t_1<\cdots<t_k<1$ for some $k\in\ZZ_+$ such that $\phi$ is continuously differentiable 
 except at the points $t_1,\ldots,t_k$, and the left- and right-hand limits $\phi'(t_{\ell}-)$ and $\phi'(t_\ell+)$, $\ell\in\{1,\ldots,k\}$, 
 and, additionally, $\phi'(0+)$ and $\phi'(1-)$  exist in $\RR$.

In this paper, we will study the existence of a continuous $\Theta$-variation function 
 of a function $f:[0,1]\to\RR$ along the sequence of $b$-adic partitions $\Pi_{n}=\{jb^{-n}: j=0,1,\ldots,b^n\}$, $n\in\NN$, where $b\in\NN\setminus\{1\}$ and $f$ is given by
 \begin{align}\label{help7}
   f(t):=\sum_{m=0}^\infty b^{-m}\phi(b^m t),\qquad t\in[0,1],
 \end{align}
 where we assume that $\phi:\RR\to\RR$ fulfills the following conditions:\\[-9mm]
  \begin{enumerate}\itemsep=-0.5mm
    \item[(H1)] $\phi$ is periodic with period $1$ and vanishes on $\ZZ$,
    \item[(H2)] $\phi$ is continuous and its restriction to $[0,1]$ is piecewise continuously differentiable,
    \item[(H3)] the $(jb)^{\mathrm{th}}$-Fourier coefficient of $\phi$ is zero for all $j\in \ZZ\setminus\{0\}$.
  \end{enumerate}

The function $f$ defined in \eqref{help7} is a lacunary series (function) that fulfills the so-called Hadamard gap condition, 
 since $b^{m+1}/b^m= b>1$ for each $m\in\ZZ_+$.
For details on lacunary series, we refer to Aistleitner et al.\ \cite{AisBerTic}.
Note that we will also study some properties of lacunary trigonometric functions in Lemma \ref{lacunaryHoelder} and Example \ref{Ex_lacunary}.
Lacunary sums can be considered as random variables on the probability space 
 $[0,1]$ equipped with the Borel sets and Lebesgue measure, and they often mimic the behaviour 
 of sums of independent and identically distributed random variables provided that the Hadamard gap condition holds, see Aistleitner et al.\ \cite[Section 7]{AisBerTic}.
This property of lacunary sums is usually called ''almost independent'' behaviour in the literature.
Recently, Aistleitner et al.\ \cite{AisGanKabProRam} have studied large deviation principles for lacunary sums that satisfy the Hadamard gap condition
 or some other gap condition.
Very recently, Formica et al.\ \cite{ForOstSir} have investigated precise exponential tail estimates for trigonometric
 lacunary series.
 
We also mention that the function $f$ defined in \eqref{help7} is called a special Weierstrass-type function as well 
 (see, e.g., the Introduction of Barczy and Kern \cite{BarKer}).

\begin{Rem}\label{Rem_hypotheses}
(i) The hypotheses (H1) and (H2) imply that $\phi$ is Lipschitz continuous, see Lemmas \ref{Lemma_Lipcont_1} and \ref{Lemma_Lipcont_2}.

(ii). We call the attention that the parameter $b\in\NN\setminus\{1\}$ in the definition of $f$ in \eqref{help7} and in hypothesis (H3) 
 is the same as the parameter $b$ of the sequence of $b$-adic partitions along which the existence of a continuous $\Theta$-variation
 function of $f$ will be investigated.

(iii).  
According to Theorem 2.5 in Folland \cite{Fol}, by the hypotheses (H1) and (H2), 
 the Fourier series of $\phi$ converges to $\phi$ absolutely and uniformly on $\RR$. 
Let $(c_j)_{j\in\ZZ}$ denote the Fourier coefficients of $\phi$
 (see \eqref{Fourier_coefficients}).
Note also that the hypotheses (H1) and (H2) together with Theorem 2.6 in Folland \cite{Fol} 
 implies that $\sum_{j\in\ZZ}j^{2}|c_j|^{2}<\infty$. 
Moreover, if hypothesis (H1) and $\sum_{j\in\ZZ}|j\,c_j|<\infty$ hold, then the proof of the second part of Theorem 2.6 
 in Folland \cite{Fol} shows that $\phi$ is continuously differentiable and thus fulfills (H2).
\proofend
\end{Rem} 

Let us also introduce the following hypothesis:
  \begin{itemize}
   \item[(H2$^{\ast}$)] $\phi$ is H\"older continuous with exponent $\beta\in(0,1]$ such that it has an absolutely convergent Fourier series,
         i.e., $\sum_{j\in\ZZ} \vert c_j\vert<\infty$, where $c_j$, $j\in\ZZ$, denote the Fourier coefficients of $\phi$
         (see \eqref{Fourier_coefficients}).
  \end{itemize}

In some cases, we will also consider the function $f$ defined in \eqref{help7} when the hypothesis (H2) is replaced by 
 (H2$^{\ast}$), which, in the presence of (H1), is weaker than (H2) (see part (i) of Remark \ref{Rem_hypotheses_2}).

\begin{Rem}\label{Rem_hypotheses_2}
(i). If the hypotheses (H1) and (H2$^{\ast}$) hold, then the Fourier series of $\phi$ converges to $\phi$ absolutely and uniformly on $\RR$,
 see, e.g., Katznelson \cite[pages 31 and 32]{Kat}.
Further, if the hypotheses (H1) and (H2) hold, then the hypothesis (H2$^*$) with $\beta=1$ holds as well (following from Lemmas \ref{Lemma_Lipcont_1} and \ref{Lemma_Lipcont_2}).

(ii).
If the hypothesis (H1) holds and $\phi$
 is H\"older continuous with exponent $\beta\in(\frac{1}{2},1]$, then $\phi$ has an absolutely
 convergent Fourier series due to Bernstein's theorem, see, e.g., Katznelson \cite[Theorem 6.3 on page 33]{Kat} or 
 Stein and Shakarchi \cite[part (d) of Exercise 16 on page 93]{SteSha}.
Further, according to Zygmund's theorem (see, e.g., Katznelson \cite[Theorem 6.4 on page 33]{Kat}),
 if the hypothesis (H1) holds, $\phi$ is H\"older continuous with exponent $\beta\in(0,1]$
 and is of bounded variation, then $\phi$ has an absolutely convergent Fourier series.
\proofend
\end{Rem}

Next, we will recall a probabilistic representation of $V^{\Theta,1}_n(f)$ (due to Han et al.\ \cite[formula (2.2)]{HanSchZha})
 in terms of the expectation of some appropriate random variable.
In order to do so, first, we introduce some notations.
For each $m\in\NN$ and $j\in\ZZ_+$, let
 \begin{align*}
  \lambda_{m,j}:=\frac{\phi((j+1)b^{-m}) - \phi(jb^{-m})}{b^{-m}},
 \end{align*}
 which is the slope of the line connecting $(jb^{-m},\phi(jb^{-m}))$ with $((j+1)b^{-m},\phi((j+1)b^{-m}))$.
Let $(U_n)_{n\in\NN}$ be a sequence of independent and identically distributed random variables such that
 $U_1$ is uniformly distributed on the finite set $\{0,1,\ldots,b-1\}$.
Further, let $R_0:=0$, and, for each $m\in\NN$, let us define the random variables 
 \begin{align}\label{help33}
  R_m:=\sum_{i=1}^m U_i b^{i-1} \qquad \text{and}\qquad Y_m:=\lambda_{m,R_m}.
 \end{align}
One can easily check that $R_m$ is uniformly distributed on the set $\{0,\ldots,b^m-1\}$ for each $m\in\ZZ_+$.

If $\phi:\RR\to\RR$ is H\"older continuous with exponent $\beta\in(0,1]$, then there exists a constant $C\in\RR_{++}$ such that
 \begin{align}\label{phi_Holder}
    \vert \phi(x) - \phi(y)\vert \leq C \vert x - y\vert^\beta, \qquad x,y\in\RR,
 \end{align}
and consequently we have
 \begin{align}\label{Y_estimate}
  \vert Y_m\vert\leq C{b^{m(1-\beta)}}, \qquad  m\in\NN.
 \end{align}
If the hypotheses (H1) and (H2) are fulfilled, then, by Lemmas \ref{Lemma_Lipcont_1} and \ref{Lemma_Lipcont_2},
 we have that $\phi$ is Lipschitz continuous, i.e., $\beta=1$, and hence $|Y_{m}|\leq C$ for all $m\in\NN$, i.e., the sequence $(Y_m)_{m\in\NN}$ is bounded.

We now provide an expression for $V^{\Theta,1}_n(f)$, $n\in\NN$.
The result itself is not entirely new, it is due to Han et al.\ \cite[formula (2.2)]{HanSchZha},
 but we present it under a weaker hypothesis (supposing the H\"older continuouity of $\phi$ instead of Lipschitz continuity)
 and clarify the well-definedness of the expression in question for sufficiently large \ $n\in\NN$ as well.

\begin{Lem}\label{Lem_p_var_expression}
Let $b\in\NN\setminus\{1\}$ and $\phi:\RR\to\RR$ be a function satisfying the hypothesis (H1) and suppose that $\phi$ is H\"older continuous with exponent $\beta\in(0,1]$.
Then the function $f$ given by \eqref{help7} is well-defined and continuous.
Further, we have that
 \begin{align}\label{help15_Theta}
   V^{\Theta,1}_n(f) = b^n \EE\left( \Theta\left( b^{-n}\left\vert \sum_{m=1}^n Y_m \right\vert \right) \right)
                        \qquad \text{for sufficiently large \ $n\in\NN$},
 \end{align}
 where the functions $\Theta:[0,1)\to\RR_+$ and $f:[0,1]\to\RR$ are given by \eqref{def_Theta} and \eqref{help7}, respectively.
\end{Lem}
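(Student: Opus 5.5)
The proof has three parts: show that $f$ is well defined and continuous, compute the increments of $f$ on the $b$-adic grid, and rewrite the sum as an expectation. For the first part, Hölder continuity together with $\phi(0)=0$ and periodicity shows that $\phi$ is bounded: $|\phi(x)|\leq C\,\mathrm{dist}(x,\ZZ)^\beta\leq C$. Hence the series \eqref{help7} is dominated by $\sum_m C b^{-m}<\infty$. By the Weierstrass M-test it converges uniformly on $[0,1]$, so $f$ is well-defined and continuous as a uniform limit of continuous functions.

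For the increments, fix $n\in\NN$ and $j\in\{0,\ldots,b^n-1\}$. For $m\geq n$, both $b^m(j+1)b^{-n}$ and $b^m jb^{-n}$ are integers, so by (H1) those terms vanish. Thus
\[
f((j+1)b^{-n})-f(jb^{-n})=\sum_{m=0}^{n-1} b^{-m}\bigl(\phi(b^{m-n}(j+1))-\phi(b^{m-n}j)\bigr).
\]
Substitute $k=n-m\in\{1,\ldots,n\}$. The $m$-th term becomes $b^{k-n}(\phi((j+1)b^{-k})-\phi(jb^{-k}))=b^{-n}\lambda_{k,j}$. By 1-periodicity, $\lambda_{k,j}=\lambda_{k,\,j \bmod b^k}$. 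The increment therefore equals $b^{-n}\sum_{k=1}^n\lambda_{k,\,j\bmod b^k}$. The term $j=b^n$ in $V^{\Theta,1}_n(f)$ has increment $f(1)-f(1)=0$ by the extension convention, and it contributes $\Theta(0)=0$.

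The step needing the most care is matching this with the random variables $Y_m$. Since $R_n$ is uniform on $\{0,\ldots,b^n-1\}$ and $R_n \bmod b^k=R_k$ (the base-$b$ digits $U_1,\ldots,U_k$), we get $Y_k=\lambda_{k,R_n\bmod b^k}$ for $k\leq n$. Averaging over $j=R_n$ then gives
\[
\frac{1}{b^n}\sum_{j=0}^{b^n-1}\Theta\Bigl(b^{-n}\Bigl|\sum_{k=1}^n\lambda_{k,j\bmod b^k}\Bigr|\Bigr)=\EE\,\Theta\Bigl(b^{-n}\Bigl|\sum_{k=1}^nY_k\Bigr|\Bigr),
\]
which is \eqref{help15_Theta} after multiplying by $b^n$.

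It remains to check well-definedness for large $n$, i.e.\ that the argument of $\Theta$ lies in $[0,1)$. By \eqref{Y_estimate},
\[
b^{-n}\Bigl|\sum_{k=1}^n Y_k\Bigr|\leq Cb^{-n}\sum_{k=1}^n b^{k(1-\beta)}.
\]
For $\beta=1$ this bound is $Cnb^{-n}$. For $\beta<1$ it is at most $C'b^{-n\beta}$. In both cases it tends to $0$, so it is $<1$ for all $n$ large enough. The same bound covers all increments uniformly in $j$, so every summand in the definition of $V^{\Theta,1}_n(f)$ is also well defined for large $n$.
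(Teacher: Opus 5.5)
Your proof is correct and follows the paper's route: boundedness of $\phi$ plus the Weierstrass M-test gives well-definedness and continuity of $f$, and the bound \eqref{Y_estimate} gives $b^{-n}\vert\sum_{m=1}^n Y_m\vert\to 0$, so that $\Theta$ is evaluated inside $[0,1)$ for large $n$. The only difference is that the paper does not prove the identity itself; it cites formula (2.2) of Han et al. You carry that step out correctly: you derive $f((j+1)b^{-n})-f(jb^{-n})=b^{-n}\sum_{k=1}^n\lambda_{k,\,j \bmod b^k}$, identify $Y_k=\lambda_{k,\,R_n \bmod b^k}$, and also handle the $j=b^n$ term and the well-definedness of the left-hand side.
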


The proof of Lemma \ref{Lem_p_var_expression} can be found in Appendix \ref{App_C}.

Next, we recall some recent results of Han et al.\ \cite{HanSchZha} on the $\Theta$-variation function of the Takagi function and a Weierstrass function 
 that motivated our study.

\begin{Thm}[Han et al.\ {\cite[Theorems 1.2 and 1.3]{HanSchZha}}]\label{Thm_HSZ_Thms_12_13}
Let $b\in\NN\setminus\{1\}$, $\Theta:[0,1)\to\RR_+$ be given by \eqref{def_Theta}, 
 and let $f:[0,1]\to\RR$ be given by \eqref{help7} with
\begin{itemize}
 \item[(i)] $\phi(t):=\min_{z\in\ZZ}\vert z-t\vert$, $t\in\RR$. (In this case $f$ is called the Takagi function.) Then
      \[
        \langle f \rangle^{\Theta}(t)
           = \begin{cases}
               t\cdot \sqrt{\frac{2}{\pi \ln(b)}}  &  \text{if $b$ is even,}\\[1mm]
               t\cdot \sqrt{\frac{2}{\pi\ln(b)}\cdot \frac{b+1}{b-1} }  &  \text{if $b$ is odd,}
            \end{cases}\qquad t\in[0,1].
      \]
  \item[(ii)] $\phi(t):=\nu \sin(2\pi t) + \varrho \cos(2\pi t) - \varrho$, $t\in\RR$, where $\nu,\varrho\in\RR$.
      (In this case $f$ is called a Weierstrass function.)
       Then 
       \[
        \langle f \rangle^{\Theta}(t)
         = t\cdot 2\sqrt{\frac{\pi(\nu^2+\varrho^2)}{\ln(b)}},
         \qquad t\in[0,1].
       \]
\end{itemize}
\end{Thm}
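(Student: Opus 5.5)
The plan is to prove (i) and (ii) by one common scheme: a central limit theorem for $S_n:=\sum_{m=1}^n Y_m$, fed into the representation \eqref{help15_Theta} of Lemma \ref{Lem_p_var_expression}. In both cases $\phi$ is Lipschitz, so $|Y_m|\leq C$ and Lemma \ref{Lem_p_var_expression} applies. It gives
\[
V^{\Theta,1}_n(f)=\EE\Bigl(\frac{|S_n|}{\sqrt{n\ln b-\ln|S_n|}}\,\bbone_{\{S_n\neq 0\}}\Bigr).
\]
Suppose we know that $S_n/\sqrt n\distr \sigma N$ with $N$ standard normal, and that $\sup_n\EE(S_n^2)/n<\infty$. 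Since $|S_n|\leq Cn$, we have $0\leq \ln|S_n|\leq \ln(Cn)=o(n)$ on $\{S_n\neq0\}$. Hence $V^{\Theta,1}_n(f)=(1+o(1))\,\EE|S_n|/\sqrt{n\ln b}$, and uniform integrability gives the limit $\sigma\,\EE|N|/\sqrt{\ln b}=\sigma\sqrt{2/(\pi\ln b)}$. So everything reduces to identifying the structure of $(Y_m)$ and computing $\sigma^2$.

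\emph{Takagi function.} Write $x=R_mb^{-m}$; its base-$b$ digits, read from the top, are $U_m,U_{m-1},\dots,U_1$. Since $\phi$ has slope $+1$ on $[0,\frac12]$ and $-1$ on $[\frac12,1]$, we get $Y_m=1$ if $[x,x+b^{-m}]\subset[0,\frac12]$ and $Y_m=-1$ if $[x,x+b^{-m}]\subset[\frac12,1]$.
\begin{itemize}
\item If $b$ is even, $\frac12$ is a grid point. Then $Y_m=1$ if $U_m<b/2$ and $Y_m=-1$ otherwise, so the $Y_m$ are i.i.d.\ Rademacher variables. The classical CLT gives $\sigma=1$.
\item If $b$ is odd, put $k=(b-1)/2$. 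The only straddling interval is the one with $x=\frac12-\frac12 b^{-m}$, i.e.\ all $m$ digits equal to $k$; it is symmetric about $\frac12$, so $Y_m=0$ there. In general, $Y_m=\pm1$ according to whether the first digit different from $k$ among $U_m,U_{m-1},\dots,U_1$ is $<k$ or $>k$, and $Y_m=0$ if there is no such digit.
\end{itemize}
In the odd case, with respect to $\cF_m:=\sigma(U_1,\dots,U_m)$ we get $\EE(Y_m\mid\cF_{m-1})=Y_{m-1}/b$. Indeed, if $U_m\neq k$ then $Y_m=\pm1$ symmetrically, and if $U_m=k$ then $Y_m=Y_{m-1}$. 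Thus $Z_m:=Y_m-Y_{m-1}/b$ are bounded martingale differences, and $\sum_{m\leq n}Z_m=(1-\frac1b)S_n+O(1)$. Their conditional variance is $\EE(Z_m^2\mid\cF_{m-1})=\frac{b-1}{b}-\frac{b-1}{b^2}Y_{m-1}^2$. Here $Y_{m-1}^2=1$ except on an event of probability $b^{-(m-1)}$, so $\frac1n\sum_{m\leq n}\EE(Z_m^2\mid\cF_{m-1})\to\frac{(b-1)^2}{b^2}$ in $L^1$. The martingale CLT then gives $\sigma^2=\frac{(b-1)^2}{b^2}\cdot\frac{b^2}{(b-1)^2}\cdot\frac{b+1}{b-1}$. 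The factor $\frac{b+1}{b-1}$ is seen more transparently from the covariances: $\EE Y_m^2=1-b^{-m}$ and $\EE(Y_mY_{m'})=b^{-(m-m')}+O(b^{-m})$ for $m'<m$, so $\mathrm{Var}(S_n)\sim n(1+\frac{2}{b-1})$.

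\emph{Weierstrass function.} Write $\phi(x)=\Re(a\ee^{2\pi\ii x})-\varrho$ with $a:=\varrho-\ii\nu$, and set $A_m:=a\,b^m(\ee^{2\pi\ii b^{-m}}-1)\to 2\pi\ii a$. Then $Y_m=\Re(A_m\ee^{2\pi\ii\theta_m})$ with $\theta_m=R_mb^{-m}=U_m/b+\theta_{m-1}/b$. Averaging $\ee^{2\pi\ii U_m/b}$ over $U_m$ gives $0$, so $(Y_m)$ is a bounded martingale difference sequence for $(\cF_m)$. Its conditional variance is $\frac12|A_m|^2+\frac12\Re\bigl(A_m^2\,\EE(\ee^{4\pi\ii\theta_m}\mid\cF_{m-1})\bigr)$. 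For $b\geq3$ the second term vanishes identically. For $b=2$ it does not, and I expect this to be the main technical point: one has to show that $\frac1n\sum_{m\leq n}\ee^{4\pi\ii\theta_m}\to0$ in $L^2$. I would do this by bounding $|\EE\,\ee^{4\pi\ii(\theta_m-\theta_{m'})}|$ geometrically in $m-m'$, using the fresh digits between the two levels. The martingale CLT then yields $\sigma^2=\frac12|2\pi a|^2=2\pi^2(\nu^2+\varrho^2)$, and hence the constant $\sqrt{2\pi^2(\nu^2+\varrho^2)}\cdot\sqrt{2/(\pi\ln b)}=2\sqrt{\pi(\nu^2+\varrho^2)/\ln b}$.

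\emph{General $t$.} First take $t=\ell b^{-N}$. Every $j<\ell b^{n-N}$ has top $N$ digits (at level $n$) forming a number $<\ell$. So $V^{\Theta,t}_n(f)$ equals the same expectation computed under the conditioning that $(U_n,\dots,U_{n-N+1})$ lies in a fixed set of probability $t$, multiplied by $t$; the single extra summand with $j=tb^n$ tends to $0$. Those top $N$ digits affect only the $O(N)$ top-level slopes and, through them, a bounded number of terms of $S_n$. For the Takagi case with odd $b$, the dependence of the lower $Y_m$ on these digits is only through runs of digits equal to $k$, which have geometrically small probability. The conditional CLT and the uniform integrability therefore persist, and the limit is $t$ times the constant. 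Finally, $V^{\Theta,t}_n(f)$ is nondecreasing in $t$, the candidate limit is continuous and linear, and $b$-adic rationals are dense; sandwiching gives convergence for every $t\in[0,1]$.
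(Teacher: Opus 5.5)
The paper does not prove this theorem. It is quoted from Han et al., and the paper only \emph{recovers} part (i) for even $b$ and part (ii) as corollaries of Theorem \ref{Thm_main} (Examples \ref{Ex_Takagi} and \ref{Ex_Weierstrass}). The odd-$b$ Takagi case is explicitly left to Han et al., because (H3) fails there and $(Y_m)$ is no longer a martingale difference sequence.

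\textbf{Comparison of routes.} Your proposal keeps the skeleton of the proof of Theorem \ref{Thm_main}: the representation \eqref{help15_Theta}, a CLT plus uniform integrability as in Lemma \ref{Lem_HSZ}, and independence of $S_{n-N}$ from the top digits for $t<1$. What differs is that you obtain the CLT case by case:
\begin{itemize}
\item i.i.d.\ Rademacher increments for even $b$;
\item an explicit exponential martingale for the Weierstrass function;
\item a coboundary (Gordin-type) approximation $Z_m=Y_m-Y_{m-1}/b$ for odd $b$.
\end{itemize}
The paper's route buys generality. (H3) yields the martingale property for every admissible $\phi$. Lemma \ref{Lem_quadratic_Z_almost}, via the ergodic Lemma \ref{Lem_weak_conv}, identifies $\lim\frac1n\langle Z\rangle_n=\int_0^1(\phi')^2$ for all $b$ at once. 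Your route is more elementary and, importantly, also covers odd $b$, which the paper's method cannot reach. Your worry about $b=2$ in the Weierstrass case is easily settled: there $\ee^{4\pi\ii\theta_m}=\ee^{2\pi\ii\theta_{m-1}}$, and these are exactly orthogonal in $L^2$. Your monotone sandwich over $b$-adic rationals is a valid substitute for the $\delta$-approximation in Steps 3--4.

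\textbf{An error to fix in the odd-$b$ case.} Since $\EE(Y_m^2\mid\cF_{m-1})=\frac{b-1}{b}+\frac1b Y_{m-1}^2$ and $\EE(Y_m\mid\cF_{m-1})=Y_{m-1}/b$, you get
\[
\EE(Z_m^2\mid\cF_{m-1})=\frac{b-1}{b}+\frac{1}{b}Y_{m-1}^2-\frac{1}{b^2}Y_{m-1}^2=\frac{b-1}{b}+\frac{b-1}{b^2}\,Y_{m-1}^2 ,
\]
with a plus sign, not a minus. Hence $\frac1n\sum_{m\le n}\EE(Z_m^2\mid\cF_{m-1})\to\frac{b^2-1}{b^2}$ almost surely, because $Y_m^2=1$ eventually. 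Then $S_n=\frac{b}{b-1}\bigl(\sum_{m\le n}Z_m-Y_n/b\bigr)$ gives $\sigma^2=\frac{b^2}{(b-1)^2}\cdot\frac{b^2-1}{b^2}=\frac{b+1}{b-1}$ directly, with no extra factor.

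With your minus sign, the martingale CLT would give $\sigma^2=1$, and the factor $\frac{b+1}{b-1}$ you multiply in has no justification. Your covariance computation $\EE S_n^2/n\to\frac{b+1}{b-1}$ is correct, but it contradicts your stated conditional variance rather than supporting it.

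\textbf{Two minor slips.}
\begin{itemize}
\item For the Weierstrass function $S_n$ is not integer-valued, so $\ln|S_n|\ge0$ can fail on $\{0<|S_n|<1\}$. This is harmless: restrict to $\{|S_n|\ge1\}$ in the lower bound, as in Step 2 of the proof of Lemma \ref{Lem_HSZ}.
\item In the general-$t$ step the dependence runs the other way. $Y_m$ is a function of $U_1,\dots,U_m$ only, so $S_{n-N}$ is \emph{exactly} independent of the top $N$ digits, and no estimate on runs of $k$'s is needed.
\end{itemize}
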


In Examples \ref{Ex_Takagi} and \ref{Ex_Weierstrass}, we check that  
 the Takagi function for an even $b$ and the Weiserstrass function considered in Theorem \ref{Thm_HSZ_Thms_12_13} 
 are special cases of the function $f$ defined in \eqref{help7} satisfying the hypotheses (H1), (H2) and (H3).
Our aim is to extend the results of Han et al.\ \cite{HanSchZha} to a function $f$ defined in \eqref{help7},
 where $\phi$ satisfies the hypotheses (H1), (H2) and (H3).

The paper is structured as follows.
Section \ref{Sec_Results}
is devoted to investigate the existence of a continuous $\Theta$-variation function of $f$ given by \eqref{help7},
where $\phi$ satisfies the hypotheses (H1), (H2) and (H3), and to derive a formula for it.
First, under the hypotheses (H1), (H2$^*$) and (H3), we introduce a square-integrable martingale 
related to the function $f$, by considering partial sums of the appropriately scaled random variables $Y_m$, $m\in\NN$
(given in \eqref{help33}), namely, $Z_n:=\sum_{m=1}^n b^{m(\beta-1)}Y_m$, $n\in\NN$,
see Proposition \ref{Pro_Z_martingale}.
Then, under the hypotheses (H1), (H2) and (H3), we prove that 
 its quadratic variation $\langle Z\rangle_n$ multiplied by $1/n$ converges $\PP$-almost surely to 
$\int_0^1 (\phi'(x))^2\,\dd x$ as $n\to\infty$,
 see Lemma \ref{Lem_quadratic_Z_almost}.
It turns out that $\frac{1}{n}\EE(\langle Z\rangle_n)$ also converges to 
$\int_0^1 (\phi'(x))^2\,\dd x$ as $n\to\infty$, see Lemma \ref{Lem_quadratic_Z_exp}.
Theorem \ref{Thm_main} contains our main result, in which, under the hypotheses (H1), (H2) and (H3),
we describe the continuous $\Theta$-variation function of $f$ along the sequence of $b$-adic partitions.
Section \ref{Section_examples} is devoted to some examples. 
In Example \ref{Ex_Takagi}, we apply Theorem \ref{Thm_main} to the Takagi function by recovering a recent result of Han et al.\ \cite[Theorem 1.2]{HanSchZha} 
 (see also part (i) of Theorem \ref{Thm_HSZ_Thms_12_13} for an even $b$).
In Example \ref{Ex_Weierstrass}, we also apply Theorem \ref{Thm_main} to Weierstrass functions by recovering the corresponding result of
 Han et al.\ \cite[Theorem 1.2]{HanSchZha} (see also part (ii) of Theorem \ref{Thm_HSZ_Thms_12_13}).
We emphasize that, in case of the Takagi function, our method of proof is different from the one in Han et al.\ \cite[Theorem 1.2]{HanSchZha}.
As a new example, we apply Theorem \ref{Thm_main} to lacunary trigonometric functions, see Example \ref{Ex_lacunary}.
We close the paper with three appendices which contain some auxiliary results.
In Appendix \ref{App_A}, we recall a result on the comparison of two canonical representations 
of real numbers in $[0,1)$. 
Appendix \ref{App_B} is devoted to some auxiliary results on piecewise continuous and piecewise continuously differentiable functions.
For example, in Lemma \ref{Lemma_Lipcont_1}, we prove that
a continuous and piecewise continuously differentiable function defined on $[0,1]$,
is Lipschitz continuous as well. 
We believe that the results in Appendices \ref{App_A} and \ref{App_B} are well-known in the literature, but we cannot address any reference for it,
 and hence, for completeness, we provide proofs as well.
In Appendix \ref{App_C}, we collect some (auxiliary) results that can partially be found in Han et al.\ \cite{HanSchZha} 
 and are extended to our needs.
Namely, we give an extension of formula (2.3) in the proof of Lemma 2.1 in Han et al.\ \cite{HanSchZha},
see Lemma \ref{Lem_HSZ_aux}.
In Lemma \ref{Lem_HSZ}, we give a detailed proof of Lemma 2.1 in Han et al.\ \cite{HanSchZha},
 because its steps are needed in the proof of Theorem \ref{Thm_main}.
We also give a generalization of Lemma 2.3 in Han et al.\ \cite{HanSchZha} 
to the case of a piecewise continuous function, see Lemma \ref{Lem_weak_conv}.

\section{Continuous $\Theta$-variation function of $f$ given by \eqref{help7}}\label{Sec_Results}

Our aim is to investigate the existence of a continuous $\Theta$-variation function of $f$ given by \eqref{help7} 
 along the sequence of $b$-adic partition $\Pi_{n}$, $n\in\NN$, where $\phi$ satisfies the hypotheses (H1), (H2) and (H3) and $\Theta$ is 
 defined by \eqref{def_Theta}.
Note that Lemma \ref{Lem_p_var_expression} implies that such a function $f$ is indeed well-defined and continuous,
 since the hypotheses (H1), (H2) and Lemmas \ref{Lemma_Lipcont_1} and \ref{Lemma_Lipcont_2} yield that $\phi$ is Lipschitz continuous 
 and hence one can apply Lemma \ref{Lem_p_var_expression} with $\beta=1$.

The next result is a generalization of Lemma 2.2 in Han et al.\  \cite{HanSchZha}
 in the sense that instead of the particular Lipschitz continuous function 
 $\phi$ given in part (ii) of Theorem \ref{Thm_HSZ_Thms_12_13} (corresponding to Weierstrass functions),
 we consider a H\"older continuous function $\phi$ satisfying the hypotheses (H1), (H2*) and (H3).

Recall the definition of $Y_{m}$, $m\in\NN$, in \eqref{help33} incorporating the independent and identically distributed random variables 
 $U_{1},\ldots,U_{m}$, $m\in\NN$, with a uniform distribution on $\{0,\ldots,b-1\}$.
  
\begin{Pro}\label{Pro_Z_martingale}
Let us consider the function $f$ defined by \eqref{help7} such that the hypotheses (H1), (H2$^*$) and (H3) hold.
Then the stochastic process $(Z_n)_{n\in\ZZ_+}$ given by $Z_0:=0$ and
 \begin{align}\label{Def_Z}
    Z_n:=\sum_{m=1}^n b^{m(\beta-1)}Y_m, \qquad n\in\NN,
 \end{align}
 is a zero mean, square-integrable martingale with respect to the filtration $(\cF_n)_{n\in\ZZ_+}$,
 where $\cF_n:=\sigma(U_1,\ldots,U_n)$, $n\in\NN$, and $\cF_0:=\{\emptyset,\Omega\}$. 
\end{Pro}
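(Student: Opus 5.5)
Since $Y_m=\lambda_{m,R_m}$ and $R_m$ is a function of $U_1,\ldots,U_m$, each $Y_m$ is $\cF_m$-measurable, so $(Z_n)$ is adapted. By \eqref{Y_estimate}, $|b^{m(\beta-1)}Y_m|\leq C$, hence every $Z_n$ is bounded and thus square-integrable. It therefore suffices to show
\[
\EE(Y_{m+1}\mid\cF_m)=0 \quad\text{for all } m\in\ZZ_+ .
\]
Taking expectations then gives $\EE(Y_1)=0$, so $\EE(Z_n)=0$ and the martingale property follows.

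To compute the conditional expectation, note that $R_{m+1}=R_m+U_{m+1}b^m$ with $U_{m+1}$ independent of $\cF_m$. This gives
\[
\EE(Y_{m+1}\mid\cF_m)=\frac{b^{m+1}}{b}\sum_{u=0}^{b-1}\Bigl[\phi\bigl((R_m+ub^m+1)b^{-m-1}\bigr)-\phi\bigl((R_m+ub^m)b^{-m-1}\bigr)\Bigr].
\]
Writing $x:=R_mb^{-m-1}$ and $h:=b^{-m-1}$, the bracket is $\phi(x+h+u/b)-\phi(x+u/b)$. It is therefore enough to show that
\[
\psi(y):=\sum_{u=0}^{b-1}\phi\Bigl(y+\frac{u}{b}\Bigr)
\]
is constant on $\RR$. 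Once this holds, the sum equals $\psi(x+h)-\psi(x)=0$. (For $m=0$ the same computation gives $\EE(Y_1)=\sum_u[\phi((u+1)/b)-\phi(u/b)]=\phi(1)-\phi(0)=0$, using (H1) directly.)

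The key step is the constancy of $\psi$, which uses (H3). By Remark \ref{Rem_hypotheses_2}(i), (H1) and (H2$^*$) imply that $\phi(y)=\sum_{j\in\ZZ}c_j\ee^{2\pi\ii jy}$ converges absolutely and uniformly. Hence
\[
\psi(y)=\sum_{j\in\ZZ}c_j\ee^{2\pi\ii jy}\sum_{u=0}^{b-1}\ee^{2\pi\ii ju/b}.
\]
The inner geometric sum equals $b$ if $b\mid j$ and $0$ otherwise. By (H3), $c_{kb}=0$ for every $k\neq0$, so only $j=0$ survives and $\psi\equiv bc_0$. Uniform absolute convergence justifies exchanging the finite sum over $u$ with the series.

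The only delicate points are the measurability and independence bookkeeping in the conditional expectation, handled by the standard freezing lemma since $U_{m+1}$ is independent of $\cF_m$, and the use of absolute convergence of the Fourier series. Neither should present a real obstacle.
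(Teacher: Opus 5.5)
Your proof is correct and follows essentially the same route as the paper: adaptedness and boundedness of $Z_n$, reduction to $\EE(Y_{m+1}\mid\cF_m)=0$ via $R_{m+1}=R_m+U_{m+1}b^m$, and then the absolutely convergent Fourier expansion of $\phi$. In both arguments the geometric sum $\sum_{u=0}^{b-1}\ee^{2\pi\ii ju/b}$ vanishes for $j\notin b\ZZ$, and (H3) removes all other nonzero frequencies. The only difference is presentational: you package the cancellation as the constancy of $\psi(y)=\sum_{u=0}^{b-1}\phi(y+u/b)$, whereas the paper substitutes the Fourier series directly into $Y_n$ and sums termwise.
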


\begin{proof}
Note that $\cF_n=\sigma(R_n)$, $n\in\ZZ_{+}$, since $R_{0}=0$ and $R_n = \sum_{i=1}^n U_i b^{i-1}$ is a canonical $b$-adic expansion for all $n\in\NN$,
 and hence there is a one-to-one correspondence between $R_n$ and $U_i$, $i\in\{1,\ldots,n\}$ for all $n\in\NN$,
 see, e.g., Vakil \cite[Theorem B.1.2]{Vak}.
Therefore, taking into account the definition \eqref{help33} of $Y_m$, $m\in\ZZ_+$, as well,  
 $Z_n$ is measurable with respect to $\cF_n$ for all $n\in\ZZ_{+}$.
Using \eqref{Y_estimate}, we have $\vert Z_n\vert\leq C n$, $n\in\ZZ_{+}$, yielding that $Z_n$ is square-integrable for all $n\in\ZZ_{+}$.
Since, for all $n\in\NN$ ,we have
 \begin{align*}
   \EE(Z_n\mid \cF_{n-1}) & = \EE(Z_{n-1}\mid \cF_{n-1}) + \EE( b^{n(\beta-1)}Y_n\mid \cF_{n-1}) 
                           = Z_{n-1} + b^{n(\beta-1)} \EE(Y_n\mid \cF_{n-1}),
 \end{align*}
 in order to show the martingale property in question it is enough to verify that
 $\EE(Y_n\mid R_{n-1})=0$ for all $n\in\NN$. 
It also implies that $\EE(Y_n)=0$, $n\in\NN$, and hence $\EE(Z_n)=0$, $n\in\ZZ_+$.
 By hypothesis (H2$^{\ast}$), $\phi$ has an absolutely convergent Fourier series 
 \begin{align}\label{phi_Fourier_series}
   \phi(t) = \sum_{j\in\ZZ} c_j \exp(2\pi \ii j t), \qquad  t\in\RR,
 \end{align}
 where $\sum_{j\in\ZZ} \vert c_j\vert <\infty$ holds for the Fourier coefficients
 \begin{align}\label{Fourier_coefficients}
   c_j:=\int_0^1 \phi(t) \exp(-2\pi \ii j t)\,\dd t, \qquad j\in\ZZ.
 \end{align} 
Due to hypothesis (H3), we have $c_j=0$ for all $j\in b\ZZ\setminus\{0\}$.
By \eqref{help33}, we have for all $n\in\NN$
 \begin{equation}\label{help_Fourier_0_5}\begin{split}
   Y_n &= b^n \big( \phi((R_n+1)b^{-n}) - \phi(R_nb^{-n}) \big) \\
       &= b^n \sum_{j\in\ZZ} c_j \Big( \exp(2\pi \ii j (R_n+1)b^{-n}) - \exp(2\pi \ii j R_n b^{-n}) \Big)\\
       &= b^n \sum_{j\notin b\ZZ} c_j \Big( \exp(2\pi \ii j (R_n+1)b^{-n}) - \exp(2\pi \ii j R_n b^{-n}) \Big),
 \end{split}\end{equation}
 where, at the last equality, we also used that, for $j=0$, the term $\exp(2\pi \ii j (R_n+1)b^{-n}) - \exp(2\pi \ii j R_n b^{-n})$ is equal to $1-1=0$.
In what follows, we will use that $R_n = R_{n-1}+U_n b^{n-1}$, $n\in\NN$, where $R_{n-1}$ and $U_n$ are independent,
 $U_n$ is uniformly distributed on $\{0,1,\ldots,b-1\}$ and $R_{n-1}$ is uniformly distributed on $\{0,1,\ldots,b^{n-1}-1\}$.
Using \eqref{help_Fourier_0_5} and $\sum_{j\in\ZZ} \vert c_j\vert <\infty$, for each $r\in\{0,1,\ldots,b^{n-1}-1\}$ we obtain that
 \begin{align*}
 \EE(Y_n\mid R_{n-1}=r)
    & = \EE\left( b^n \sum_{j\notin b\ZZ} c_j \Big( \exp(2\pi \ii j (R_n+1)b^{-n}) - \exp(2\pi \ii j R_n b^{-n}) \Big) \,\bigg\vert\,  R_{n-1}=r \right) \\
    & = b^n \EE\left( \sum_{j\notin b\ZZ} c_j 
                   \Big( \exp(2\pi \ii j (r+ U_n b^{n-1}+1)b^{-n}) - \exp(2\pi \ii j (r+ U_n b^{n-1}) b^{-n}) \Big)
                \right)\\
    & = b^{n-1} \sum_{\ell=0}^{b-1} \sum_{j\notin b\ZZ}c_j
          \Big( \exp(2\pi \ii j (r+\ell b^{n-1}+1)b^{-n}) - \exp(2\pi \ii j (r+\ell b^{n-1}) b^{-n}) \Big)\\
    & = b^{n-1} \sum_{j\notin b\ZZ}c_j \sum_{\ell=0}^{b-1}
        \Bigg[ \exp\Big(2\pi \ii j \Big( (r+1) b^{-n} + \frac{\ell}{b}\Big)\Big) - \exp\Big(2\pi \ii j \Big(r b^{-n} + \frac{\ell}{b}\Big)\Big) \Bigg].
 \end{align*}
Using again that $\sum_{j\in\ZZ} \vert c_j\vert <\infty$, for all $x\in\RR$, we have that
 \begin{align}\label{help_Fourier2}
  \begin{split}
 \sum_{j\notin b\ZZ}c_j \sum_{\ell=0}^{b-1} \exp\Big(2\pi \ii j \Big( x + \frac{\ell}{b}\Big) \Big)
    & =  \sum_{j\notin b\ZZ} c_j \exp(2\pi \ii j x) \sum_{\ell=0}^{b-1} \exp\Big(2\pi \ii j \frac{\ell}{b} \Big) \\
    & = \sum_{j\notin b\ZZ} c_j \exp(2\pi \ii j x)\, \frac{\exp(2\pi \ii j) - 1}{\exp(2\pi \ii \frac{j}{b}) - 1}=0,
 \end{split}
 \end{align}
 where at the second equality we used that $\exp(2\pi \ii \frac{j}{b})\ne 1$ due to $j\notin b\ZZ$.
Applying \eqref{help_Fourier2} with $x=(r+1) b^{-n}$ and $x=r b^{-n}$, respectively, we get
 \[
 \EE(Y_n\mid R_{n-1}=r)=0,\qquad r\in\{0,1,\ldots,b^{n-1}-1\},
 \]
 concluding the proof.
\end{proof}

Let $(\langle Z\rangle_n)_{n\in\ZZ_+}$ denote the quadratic variation process of 
 the square-integrable martingale $(Z_n)_{n\in\ZZ_+}$ introduced in \eqref{Def_Z} with $\beta=1$, 
  i.e., $\langle Z\rangle_0:=0$ and
 \begin{align}\label{Def_Z_quadratic_variation}
   \langle Z\rangle_n:=\sum_{m=1}^n \EE( (Z_m - Z_{m-1})^2 \mid \cF_{m-1})
                      = \sum_{m=1}^n \EE( Y_m^2 \mid \cF_{m-1}),\qquad n\in\NN,
 \end{align}
 where $\cF_n = \sigma(U_1,\ldots,U_n)=\sigma(R_{n})$, $n\in\NN$, and $\cF_0=\{\emptyset,\Omega\}$.
 
\begin{Lem}\label{Lem_quadratic_Z_almost}
Let us consider the function $f$ defined by \eqref{help7} such that $\phi$ fulfills the hypotheses (H1), (H2) and (H3).
Then, for the quadratic variation process $(\langle Z\rangle_n)_{n\in\ZZ_+}$ defined in \eqref{Def_Z_quadratic_variation},
 we have
 \[
   \frac{1}{n}\langle Z\rangle_n \to  \int_0^1 (\phi'(x))^2\,\dd x\in\RR_+ \quad \text{as $n\to\infty$ \ $\PP$-almost surely.}
 \] 
\end{Lem}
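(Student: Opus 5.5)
We need to compute $\EE(Y_m^2\mid \cF_{m-1})$ explicitly as a function of $R_{m-1}$. Given $R_{m-1}=r\in\{0,\ldots,b^{m-1}-1\}$, we have $R_m=r+\ell b^{m-1}$ with $\ell$ uniform on $\{0,\ldots,b-1\}$, so
\[
  \EE(Y_m^2\mid R_{m-1}=r)=\frac{1}{b}\sum_{\ell=0}^{b-1} b^{2m}\Big(\phi\big(rb^{-m}+\tfrac{\ell}{b}+b^{-m}\big)-\phi\big(rb^{-m}+\tfrac{\ell}{b}\big)\Big)^2 .
\]
By (H2), away from the finitely many breakpoints of $\phi'$ (and their translates by $\ell/b$), the mean value theorem gives $b^m(\phi(x+b^{-m})-\phi(x))=\phi'(\xi)$ for some $\xi\in(x,x+b^{-m})$. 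Define $g(x):=\frac1b\sum_{\ell=0}^{b-1}(\phi'(x+\ell/b))^2$, which is piecewise continuous, $1/b$-periodic and bounded (since $\phi$ is Lipschitz by Lemmas \ref{Lemma_Lipcont_1} and \ref{Lemma_Lipcont_2}). Then $\EE(Y_m^2\mid\cF_{m-1})=g(R_{m-1}b^{-m})+\vare_m$, where $|\vare_m|\le \omega(b^{-m})$ whenever the interval $[R_{m-1}b^{-m},(R_{m-1}+1)b^{-m}]$ (shifted by $\ell/b$) avoids breakpoints. Here $\omega$ is a uniform modulus of continuity of $(\phi')^2$ on each piece. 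Otherwise the error is merely bounded by $2C^2$.

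Next, I would identify the point $R_{m-1}b^{-m}=\frac1b\sum_{i=1}^{m-1}U_ib^{i-1-(m-1)}$. Setting $X:=\sum_{i\ge1}U_ib^{-i}$ is not directly usable, since the digits appear reversed: $R_{m-1}b^{-(m-1)}=\sum_{k=1}^{m-1}U_{m-k}b^{-k}$, i.e.\ the most recent digit is the most significant. Because $g$ is $1/b$-periodic, $g(R_{m-1}b^{-m})=g(\tfrac1b R_{m-1}b^{-(m-1)})$, and the map $y\mapsto g(y/b)$ is $1$-periodic. Letting $T x:=bx \bmod 1$ be the $b$-adic shift and $X:=\sum_{i\ge1}U_ib^{-i}$ (uniform on $[0,1)$), we want to relate $R_{m-1}b^{-(m-1)}$ to iterates. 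Reversal is awkward, so instead I would use the following idea: consider the stationary sequence $\xi_m:=\sum_{k\ge1}U_{m-k}b^{-k}$, extended with an independent two-sided i.i.d. sequence $(U_i)_{i\le0}$. Then $|R_{m-1}b^{-(m-1)}-\xi_m|\le b^{-(m-1)}$, and $(\xi_m)$ is a stationary ergodic process (a factor of a Bernoulli shift) with uniform marginal. By Birkhoff's ergodic theorem, $\frac1n\sum_{m=1}^n h(\xi_m)\to\int_0^1 h$ almost surely for $h(y):=g(y/b)$ bounded measurable. Moreover, $\int_0^1 g(y/b)\,\dd y=b\int_0^{1/b}g=\int_0^1(\phi')^2$. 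The problem is that the relevant quantity lives on the original space, but the almost sure statement transfers because it concerns only the joint law of $(U_i)_{i\ge1}$ together with independent extra variables, which can be integrated out by Fubini.

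The main obstacle is replacing $h(R_{m-1}b^{-(m-1)})$ by $h(\xi_m)$ together with the error terms, since $h$ is only piecewise continuous. At continuity points the difference tends to $0$. The bad event that $\xi_m$ lies within $2b^{-(m-1)}$ of the finitely many discontinuities of $h$ (including those coming from breakpoints of $\phi'$) has probability $O(b^{-m})$, which is summable. Borel--Cantelli then shows that only finitely many $m$ are bad almost surely, so those terms contribute $O(1/n)$. On good indices, uniform continuity of $h$ on each closed piece gives errors tending to $0$, and their Cesàro averages vanish. Combining these steps yields $\frac1n\langle Z\rangle_n\to\int_0^1(\phi'(x))^2\,\dd x$ almost surely. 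A simpler alternative to the ergodic theorem, which I would try first, is a direct strong law: decompose $h$ by its Fourier series. Since $(\phi')^2$ need not be smooth, I would instead approximate $h$ in $L^1$ by step functions on $b$-adic intervals, where the averages become frequencies of digit blocks of i.i.d. digits, handled by the ordinary SLLN.
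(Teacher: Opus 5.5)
Your argument is correct and is essentially the paper's proof: the explicit formula for $\EE(Y_m^2\mid\cF_{m-1})$, the mean value theorem away from breakpoints with a Borel--Cantelli exclusion, and Birkhoff's ergodic theorem for the two-sided stationary sequence $\xi_m$, which is exactly $X_{m-1}$ from Lemma \ref{Lem_weak_conv}; your bound $\PP(\mathrm{dist}(\xi_m,\text{bad points})\le 2b^{-(m-1)})=O(b^{-m})$, using the uniform law of $\xi_m$, is a simpler substitute for the paper's canonical $b$-adic comparison. Only your closing step-function alternative is imprecise: $L^1$ approximation does not control $\frac1n\sum_m |h-s|(R_{m-1}b^{-(m-1)})$, so you would need $b$-adic step functions $s_-\le h\le s_+$ with $\int_0^1(s_+-s_-)<\vare$, which exist because $h$ is Riemann integrable.
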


\begin{Rem}\label{phizero}
(i). Note that the restriction of $\phi$ onto $[0,1]$ is piecewise continuously differentiable due to hypothesis (H2),
 and thus $\phi'$ exists on $(0,1)$ except for at most finitely many exceptional points $0<t_{1}<\cdots<t_{k}<1$ for some $k\in\ZZ_{+}$,
and $\phi'(0+)$ and $\phi'(1-)$ exist as well.
We can extend the domain $[0,1]\setminus\{t_i : i=1,\ldots,k\}$ of $\phi'$ by adding arbitrary values at the exceptional points in question. 
This extension is Riemann-integrable on $[0,1]$, since it is continuous Lebesgue almost everywhere. 
Further, the Lebesgue integral of the square of the extension in question on $[0,1]$
 does not depend on the values of the extension at the possible exceptional points. 
On the probability space $[0,1]$ equipped with the Borel $\sigma$-algebra and the Lebesgue measure restricted to $[0,1]$, 
 every extension defines the same element in $L^{2}([0,1])$, so that we keep the same notion $\phi'$ also for the extension.

(ii). If the hypotheses (H1) and (H2) hold and $\phi$ is not the identically zero function, 
 then $\int_{0}^{1}(\phi'(x))^{2}\,\dd x>0$. 
Indeed, on the contrary, assume that $\int_0^1 (\phi'(x))^2 \,\dd x=0$.
Since $\phi'$ is piecewise continuous, we get $\phi'(x)=0$ for all $x\in(t_{j},t_{j+1})$ with $j\in\{0,\ldots,k\}$, where $t_{0}:=0$ and $t_{k+1}:=1$. 
Hence for all $j\in\{0,\ldots,k\}$, there exists a constant $C_{j}\in\RR$ such that  $\phi(x)=C_{j}$ for all $x\in(t_{j},t_{j+1})$. 
By the hypotheses (H1) and (H2), $\phi$ is continuous and $\phi(0)=0=\phi(1)$, which implies that $\phi(x)=0$ for all $x\in[0,1]$.
\proofend
\end{Rem}

\noindent{\bf Proof of Lemma \ref{Lem_quadratic_Z_almost}.}
By hypothesis (H1) and Lemma \ref{Lemma_piececont_bound_meas}, 
 for any fixed $\ell\in\{0,\ldots,b-1\}$, the function $[0,1]\ni x\mapsto g(x):=(\phi'(\frac{x+\ell}{b}))^{2}$ is bounded 
 and piecewise continuous (where $\phi'$ is meant in the sense of part (i) of Remark \ref{phizero}).
Therefore, by the boundedness of $\phi'$ and Lemma \ref{Lem_weak_conv}, we get as $n\to\infty$
\begin{equation}\label{as-R-int-mean-value}\begin{split}
 \frac{1}{n}\sum_{m=1}^{n}
          \Bigg(\phi'\left(\frac{R_{m-1}\,b^{-(m-1)}+\ell}{b}\right)\Bigg)^2\to\int_{0}^{1}\Bigg(\phi'\left(\frac{x+\ell}{b}\right)\Bigg)^2\,\dd x \qquad \text{$\PP$-almost surely.}
\end{split}\end{equation}
Recall that, under the hypotheses (H1) and (H2), the Fourier series of $\phi$ converges to $\phi$ absolutely and uniformly on $\RR$,
 see part (iii) of Remark \ref{Rem_hypotheses}.
Using \eqref{help_Fourier_0_5} and  that $R_n = R_{n-1}+U_n b^{n-1}$, $n\in\NN$, where $R_{n-1}$ and $U_n$ are independent,
 $U_n$ is uniformly distributed on $\{0,1,\ldots,b-1\}$ and $R_{n-1}$ is uniformly distributed on $\{0,1,\ldots,b^{n-1}-1\}$,
 similarly as in the proof of Proposition \ref{Pro_Z_martingale},
 for each $m\in\NN\setminus\{1\}$ and $r\in\{0,1,\ldots,b^{m-1}-1\}$, we have that
 \begin{align*}
 &\EE(Y_m^2\mid R_{m-1}=r)\\
 &\quad= \EE\left( \left( b^m \sum_{j\notin b\ZZ} c_j \Big( \exp(2\pi \ii j (R_m+1)b^{-m}) - \exp(2\pi \ii j R_m b^{-m}) \Big) \right)^2 \,\Bigg\vert\, R_{m-1}=r \right) \\
 &\quad= b^{2m-1} \sum_{\ell=0}^{b-1} \left(\sum_{j\notin b\ZZ}c_j
           \Big( \exp(2\pi \ii j (r+\ell b^{m-1}+1)b^{-m}) - \exp(2\pi \ii j (r+\ell b^{m-1}) b^{-m}) \Big) \right)^2\\
 &\quad= b^{2m-1} \sum_{\ell=0}^{b-1} \left(\sum_{j\notin b\ZZ}c_j
           \Bigg[ \exp\Big(2\pi \ii j \Big( (r+1)b^{-m}+ \frac{\ell}{b}\Big)\Big) - \exp\Big(2\pi \ii j \Big(r b^{-m} + \frac{\ell}{b}\Big)\Big) \Bigg] \right)^2.
 \end{align*}
Hence, using that $\cF_n = \sigma(R_n)$, $n\in\ZZ_+$ (explained at the beginning of the proof of Proposition \ref{Pro_Z_martingale}),
 by \eqref{phi_Fourier_series} and hypothesis (H3), we get that
\begin{align}\label{help_quad_1}
 \begin{split}
  \frac{1}{n}\langle Z\rangle_n 
  &= \frac{1}{n}\sum_{m=1}^n b^{2m-1}
        \sum_{\ell=0}^{b-1} \Bigg(\sum_{j\notin b\ZZ}c_j
                \bigg[ \exp\Big(2\pi \ii j \Big( (R_{m-1}+1)b^{-m}+ \frac{\ell}{b}\Big)\Big) \\
  &\phantom{= \frac{1}{n}\sum_{m=2}^n b^{2m-1} \sum_{\ell=0}^{b-1} \Bigg(\sum_{j\notin b\ZZ}c_j\bigg[\,}               
                     - \exp\Big(2\pi \ii j \Big(R_{m-1} b^{-m} + \frac{\ell}{b}\Big)\Big) \bigg] \Bigg)^2\\
  & = \frac{1}{b}\sum_{\ell=0}^{b-1} \frac{1}{n} \sum_{m=1}^n
          \Bigg(\frac{\phi(\frac{(R_{m-1}+1)b^{-(m-1)}+\ell}{b})-\phi(\frac{R_{m-1}\,b^{-(m-1)}+\ell}{b})}{b^{-m}}\Bigg)^2.
 \end{split}         
 \end{align} 
For each $\ell\in\{0,\ldots,b-1\}$ and $m\in\NN$, define the random intervals
 $$
 I_{m,\ell}=\left[\frac{R_{m-1}b^{-(m-1)}+\ell}{b},\frac{(R_{m-1}+1)b^{-(m-1)}+\ell}{b}\right],
 $$
 and let $t_{1},\ldots,t_{k}$ (where $k\in\ZZ_+$) be the at most finitely many exceptional points, where the restriction of $\phi$ onto $(0,1)$ 
 is not continuously differentiable. 
Let $t_0:=0$ and $t_{k+1}:=1$.
Using Lemmas \ref{Lemma_Lipcont_1} and \ref{Lemma_Lipcont_2}, hypotheses (H1) and (H2) yield that $\phi$ is Lipschitz continuous.
To ensure 
\begin{equation}\label{BClimsup*}
\PP\left(\limsup_{m\to\infty}\{t_j\in I_{m,\ell}\}\right)=0\quad\text{ for all }j\in\{0,1,\ldots,k,k+1\}\text{ and }\ell\in\{0,\ldots,b-1\},
\end{equation}
by the Borel-Cantelli lemma, it suffices to show that
\begin{equation}\label{BClimsup*suff}
\sum_{m=1}^{\infty}\PP\left(t_j\in I_{m,\ell}\right)<\infty\quad\text{ for all }j\in\{0,1,\ldots,k,k+1\}\text{ and }\ell\in\{0,\ldots,b-1\}.
\end{equation}
Since $R_{m-1}$ is uniformly distributed on $\{0,\ldots,b^{m-2}-1\}$ for all $m\geq 2$, $m\in\NN$, we get
\begin{align*}
\PP\left(t_j\in I_{m,\ell}\right) & =\PP\left(\frac{R_{m-1}b^{-(m-1)}+\ell}{b}\leq t_{j}\leq\frac{(R_{m-1}+1)b^{-(m-1)}+\ell}{b}\right)\\
& =\PP\left( b^m t_{j}-b^{m-1}\ell-1 \leq R_{m-1}\leq b^mt_j-b^{m-1}\ell\right)\leq\frac2{b^{m-2}},
\end{align*}
which proves \eqref{BClimsup*suff}. Let $J_{\ell}$ be the set consisting of those $m\in\NN$ such that $I_{m,\ell}$ contains 
 at least one of the points $t_0,\ldots, t_{k+1}$.
Similarly, as in the proof of Lemma \ref{Lem_weak_conv}, we can conclude from \eqref{BClimsup*} that $J_{\ell}$ is finite $\PP$-almost surely.
From the Lipschitz continuity of $\phi$ (of which the Lipschitz constant is denoted by $C$ according to \eqref{phi_Holder}),
 we can deduce that, for all $\ell\in\{0,\ldots, b-1\}$, we have that $\PP$-almost surely as $n\to\infty$
\begin{equation}\label{except-nr-1}
0\leq \frac{1}{n}\sum_{m\in\{1,\ldots,n\}\cap J_{\ell}}
          \Bigg(\frac{\phi(\frac{(R_{m-1}+1)b^{-(m-1)}+\ell}{b})-\phi(\frac{R_{m-1}\,b^{-(m-1)}+\ell}{b})}{b^{-m}}\Bigg)^2
          \leq\frac{1}{n}\,|J_{\ell}|\,C^2\to0.
\end{equation}
By hypothesis (H2) and Lemma \ref{Lemma_piececont_bound_meas}, we have that $\phi'$ is bounded on $[0,1]$,
 and hence there exists  $\widetilde C\in\RR_{++}$ such that  
 $(\phi'(x))^{2}\leq\widetilde C$ for all $x\in[0,1]$, yielding that, for all $\ell\in\{0,\ldots,b-1\}$, $\PP$-almost surely
 it holds that  
\begin{equation}\label{except-nr-2}
0\leq \frac{1}{n}\sum_{m\in\{1,\ldots,n\}\cap J_{\ell}} \left(\phi'\left(\frac{R_{m-1}b^{-(m-1)}+\ell}{b}\right)\right)^2\leq \frac{1}{n}\,|J_{\ell}|\,\widetilde C\to0
 \qquad \text{as $n\to\infty$,}
\end{equation}
regardless of the values of $\phi'$ at the possible exceptional points $t_{1},\ldots,t_{k}$ 
(see part (i) of Remark \ref{phizero}).
For all $\ell\in\{0,\ldots,b-1\}$ and $m\not\in J_{\ell}$, we know that $\phi$ is continuously differentiable on $I_{m,\ell}$ and hence, by the mean value theorem, 
 there exists a $T_{m,\ell}\in[R_{m-1},R_{m-1}+1]$ such that 
 \begin{equation}\label{mvt-random}
 \phi\left(\frac{(R_{m-1}+1)b^{-(m-1)}+\ell}{b}\right)-\phi\left(\frac{R_{m-1}\,b^{-(m-1)}+\ell}{b}\right)=b^{-m}\,\phi'\left(\frac{T_{m,\ell}\,b^{-(m-1)}+\ell}{b}\right),
 \end{equation}
where
\begin{equation}\label{mvt-random-diff}
 \left|\frac{R_{m-1}b^{-(m-1)}+\ell}{b}-\frac{T_{m,\ell}\,b^{-(m-1)}+\ell}{b}\right|=\left|R_{m-1}-T_{m,\ell}\right|b^{-m}\leq b^{-m}.
 \end{equation}
Note that we do not insist on $T_{m,\ell}$ to be a random variable, 
 but the right-hand side of \eqref{mvt-random} is a random variable, and hence its left-hand side as well.
When the mean value theorem was applied, we simply have chosen an intermediate value for each $\omega\in\Omega\setminus\{m\in J_\ell\}$.
On the compact set $[0,1]\setminus\big(\bigcup_{\ell=0}^{b-1}\bigcup_{m\in J_{\ell}}I_{m,\ell}\big)^{\circ}$, 
 where $\circ$ denotes the interior of a set, the function $(\phi')^{2}$ is uniformly continuous and hence, by \eqref{mvt-random-diff}, 
 for any $\varepsilon>0$ there exists $m(\varepsilon)\in\NN$ with $m(\varepsilon)>\max J_{\ell}$ for all $\ell\in\{0,\ldots, b-1\}$ such that 
 for all $m\geq m(\varepsilon)$ and $\ell\in\{0,\ldots,b-1\}$, we have that
\begin{equation}\label{ucon-random}
\left|\Bigg(\phi'\left(\frac{R_{m-1}\,b^{-(m-1)}+\ell}{b}\right)\Bigg)^2-\Bigg(\phi'\left(\frac{T_{m,\ell}\,b^{-(m-1)}+\ell}{b}\right)\Bigg)^2\right|<\vare
\end{equation}
 holds $\PP$-almost surely. 
Recall that, for any two real sequences $(a_n)_{n\in\NN}$ and $(b_n)_{n\in\NN}$, it holds that
 $\limsup_{n\to\infty} (a_n+b_n) \leq \limsup_{n\to\infty} a_n + \limsup_{n\to\infty} b_n$,
 whenever the right-hand side of the inequality is well-defined.
Altogether, successively using \eqref{help_quad_1}, \eqref{except-nr-1}, \eqref{mvt-random}, \eqref{ucon-random}, \eqref{except-nr-2}, and \eqref{as-R-int-mean-value} as well,
 we get $\PP$-almost surely
\begin{align*}
\limsup_{n\to\infty} \frac{1}{n}\langle Z\rangle_n
 & \leq \frac{1}{b}\sum_{\ell=0}^{b-1} \limsup_{n\to\infty}\frac{1}{n} \sum_{m=1}^n
          \Bigg(\frac{\phi(\frac{(R_{m-1}+1)b^{-(m-1)}+\ell}{b})-\phi(\frac{R_{m-1}\,b^{-(m-1)}+\ell}{b})}{b^{-m}}\Bigg)^2\\
& =\frac{1}{b}\sum_{\ell=0}^{b-1} \limsup_{n\to\infty}\frac{1}{n} \sum_{m\in\{1,\ldots,n\}\setminus J_{\ell}}
          \Bigg(\frac{\phi(\frac{(R_{m-1}+1)b^{-(m-1)}+\ell}{b})-\phi(\frac{R_{m-1}\,b^{-(m-1)}+\ell}{b})}{b^{-m}}\Bigg)^2\\ 
& =\frac{1}{b}\sum_{\ell=0}^{b-1} \limsup_{n\to\infty}\frac{1}{n} \sum_{m\in\{1,\ldots,n\}\setminus J_{\ell}}
          \Bigg(\phi'\left(\frac{T_{m,\ell}\,b^{-(m-1)}+\ell}{b}\right)\Bigg)^{2}\\
& =\frac{1}{b}\sum_{\ell=0}^{b-1} \limsup_{n\to\infty}\frac{1}{n} \sum_{m\in\{m(\vare),\ldots,n\}}
          \Bigg(\phi'\left(\frac{T_{m,\ell}\,b^{-(m-1)}+\ell}{b}\right)\Bigg)^{2}\\          
& \leq\frac{1}{b}\sum_{\ell=0}^{b-1}\limsup_{n\to\infty}\frac{1}{n} \sum_{m\in\{m(\vare),\ldots,n\}\setminus J_{\ell}}
          \Bigg(\Bigg(\phi'\left(\frac{R_{m-1}\,b^{-(m-1)}+\ell}{b}\right)\Bigg)^{2}+\vare\Bigg)\\
& =\frac{1}{b}\sum_{\ell=0}^{b-1}\lim_{n\to\infty}\frac{1}{n} \sum_{m=1}^{n}
          \Bigg(\phi'\left(\frac{R_{m-1}\,b^{-(m-1)}+\ell}{b}\right)\Bigg)^{2}+\vare\\
& =\frac{1}{b}\sum_{\ell=0}^{b-1}\int_{0}^{1}\Bigg(\phi'\left(\frac{x+\ell}{b}\right)\Bigg)^2\,\dd x+\vare
 = \sum_{\ell=0}^{b-1}\int_{\ell/b}^{(\ell+1)/b} (\phi'(y))^2 \,\dd y + \vare\\
& =\int_{0}^{1}\left(\phi'\left(y\right)\right)^2\,\dd y+\vare.
\end{align*}
Analogously, using also the inequality $\liminf_{n\to\infty} (a_n+b_n) \geq \liminf_{n\to\infty} a_n + \liminf_{n\to\infty} b_n$,
 whenever the right-hand side of the inequality is well-defined,
 we get $\PP$-almost surely
  $$\liminf_{n\to\infty} \frac{1}{n}\langle Z\rangle_n\geq \int_{0}^{1}\left(\phi'\left(y\right)\right)^2\,\dd y-\vare,$$
and since $\vare>0$ is arbitrary, this proves the assertion.
\proofend

\begin{Lem}\label{Lem_quadratic_Z_exp}
Let us consider the function $f$ defined by \eqref{help7} such that $\phi$ fulfills the hypotheses (H1), (H2) and (H3).
Then, for the quadratic variation process $(\langle Z\rangle_n)_{n\in\ZZ_+}$ defined in \eqref{Def_Z_quadratic_variation},
 we have
 \[
   \lim_{n\to\infty} \frac{1}{n}\EE(\langle Z\rangle_n) = \int_0^1 (\phi'(x))^2\,\dd x\in\RR_+.
 \]
 \end{Lem}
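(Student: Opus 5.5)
The plan is to deduce the statement from Lemma \ref{Lem_quadratic_Z_almost} via the dominated convergence theorem. By Lemma \ref{Lem_quadratic_Z_almost}, $\frac{1}{n}\langle Z\rangle_n\to\int_0^1(\phi'(x))^2\,\dd x$ $\PP$-almost surely as $n\to\infty$. It therefore suffices to find an integrable (in fact constant) dominating bound for the sequence $\frac{1}{n}\langle Z\rangle_n$, $n\in\NN$.

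For this, recall that under (H1) and (H2) the function $\phi$ is Lipschitz continuous (Lemmas \ref{Lemma_Lipcont_1} and \ref{Lemma_Lipcont_2}). So \eqref{phi_Holder} holds with $\beta=1$ and some constant $C\in\RR_{++}$, and by \eqref{Y_estimate} we have $\vert Y_m\vert\leq C$ for all $m\in\NN$. Consequently $\EE(Y_m^2\mid\cF_{m-1})\leq C^2$ $\PP$-almost surely for each $m\in\NN$, by the monotonicity of conditional expectation. By \eqref{Def_Z_quadratic_variation} this gives
\[
0\leq\frac{1}{n}\langle Z\rangle_n=\frac{1}{n}\sum_{m=1}^n\EE(Y_m^2\mid\cF_{m-1})\leq C^2,\qquad n\in\NN,
\]
$\PP$-almost surely.

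The constant $C^2$ is integrable on the probability space, so the dominated convergence theorem (equivalently, the bounded convergence theorem) yields
\[
\lim_{n\to\infty}\frac{1}{n}\EE(\langle Z\rangle_n)=\EE\Big(\int_0^1(\phi'(x))^2\,\dd x\Big)=\int_0^1(\phi'(x))^2\,\dd x.
\]
This limit is finite and nonnegative, hence lies in $\RR_+$. There is no real obstacle here: the only point needing care is the uniform bound on $Y_m$, which comes from Lipschitz continuity.

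As an alternative cross-check, one could note that $\EE(\langle Z\rangle_n)=\sum_{m=1}^n\EE(Y_m^2)$ and compute $\EE(Y_m^2)=b^{m}\sum_{r=0}^{b^m-1}(\phi((r+1)b^{-m})-\phi(rb^{-m}))^2$, which tends to $\int_0^1(\phi')^2$ by a Riemann-sum and mean-value argument as in the proof of Lemma \ref{Lem_quadratic_Z_almost}. Cesàro averaging would then give the claim. The dominated convergence route is shorter, so it is the one I would follow.
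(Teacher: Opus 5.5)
Your proof is correct and follows essentially the paper's argument. The paper uses the same uniform bound $0\leq \frac{1}{n}\langle Z\rangle_n\leq C^2$ (from the Lipschitz continuity of $\phi$) together with the almost sure convergence of Lemma \ref{Lem_quadratic_Z_almost}; it merely phrases the last step as uniform integrability plus convergence in distribution (Billingsley's Theorem 5.4), whereas you invoke the bounded convergence theorem directly, which is equivalent and slightly more direct.
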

 
\begin{proof}
The sequence $\frac{1}{n}\langle Z\rangle_n$, $n\in\NN$, is uniformly bounded, since, 
 using \eqref{Def_Z_quadratic_variation} and \eqref{Y_estimate} with $\beta=1$, we have that 
 \[
  0\leq \frac{1}{n}\langle Z\rangle_n = \frac{1}{n} \sum_{m=1}^n \EE( Y_m^2 \mid \cF_{m-1}) \leq C^2,\qquad n\in\NN,
 \] 
where $C\in\RR_{++}$ is given by \eqref{phi_Holder}.
Hence the sequence $\frac{1}{n}\langle Z\rangle_n$, $n\in\NN$, is uniformly integrable. 
As a consequence of Lemma \ref{Lem_quadratic_Z_almost}, we also have that 
 $\frac{1}{n}\langle Z\rangle_n \distr \int_0^1 (\phi'(x))^2\,\dd x$ as $n\to\infty$.
It is known that the uniform integrability and weak convergence of $\frac{1}{n}\langle Z\rangle_n$, $n\in\NN$, 
 yield the statement of Lemma \ref{Lem_quadratic_Z_exp} (see, e.g., Billingsley \cite[Theorem 5.4]{Bil1}).
\end{proof} 

Next, we present the main result of the paper.

\begin{Thm}\label{Thm_main}
Let us consider the function $f$ defined by \eqref{help7} such that $\phi$ fulfills the hypotheses (H1), (H2) and (H3).
Then the continuous $\Theta$-variation function of $f$ along the sequence of $b$-adic partitions $\Pi_{n}$, $n\in\NN$, is given by
$$\langle f \rangle^{\Theta}(t)
           = t\cdot \sqrt{ \frac{2}{\pi\ln(b)}\int_0^1 (\phi'(x))^2\,\dd x }, \qquad t\in[0,1].$$
\end{Thm}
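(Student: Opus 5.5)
The plan is to first treat the case $t=1$ and then reduce general $t$ to it by self-similarity. By Lemma \ref{Lem_p_var_expression} with $\beta=1$, for large $n$,
\[
V^{\Theta,1}_n(f)=b^n\EE\Big(\Theta\big(b^{-n}\vert Z_n\vert\big)\Big),
\]
where $Z_n=\sum_{m=1}^n Y_m$ is the martingale of Proposition \ref{Pro_Z_martingale} with $\beta=1$. Write $\sigma^2:=\int_0^1(\phi'(x))^2\,\dd x$. If $\phi\equiv0$, everything vanishes and there is nothing to prove, so by part (ii) of Remark \ref{phizero} we may assume $\sigma^2>0$.

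The Lindeberg condition for the martingale central limit theorem is trivial here, because the increments satisfy $\vert Y_m\vert\leq C$. Together with Lemma \ref{Lem_quadratic_Z_almost}, which gives $\frac1n\langle Z\rangle_n\to\sigma^2$ almost surely, the martingale CLT yields $Z_n/\sqrt n\distr \sigma N$ with $N$ standard normal.

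Next, expand $\Theta$. Setting $x=b^{-n}\vert Z_n\vert$ with $\vert Z_n\vert\leq Cn$, we have $-\ln x=n\ln b-\ln\vert Z_n\vert$, hence
\[
b^n\Theta(b^{-n}\vert Z_n\vert)=\frac{\vert Z_n\vert}{\sqrt{n\ln b-\ln\vert Z_n\vert}}
=\frac{\vert Z_n\vert/\sqrt n}{\sqrt{\ln b-\ln\vert Z_n\vert/n}}.
$$
Since $\ln\vert Z_n\vert/n\leq \ln(Cn)/n\to0$, the denominator tends to $\sqrt{\ln b}$ uniformly on $\{Z_n\neq0\}$ (on $\{Z_n=0\}$ the term is zero anyway). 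Thus it suffices to show
\[
\EE(\vert Z_n\vert/\sqrt n)\to\sigma\,\EE\vert N\vert=\sigma\sqrt{2/\pi}.
\]
This follows from the distributional convergence together with uniform integrability. Uniform integrability holds because $\EE(Z_n^2)/n=\EE(\langle Z\rangle_n)/n$, which is bounded by Lemma \ref{Lem_quadratic_Z_exp} (or simply by $C^2$). Consequently $V^{\Theta,1}_n(f)\to\sqrt{2\sigma^2/(\pi\ln b)}$. This is the part where the appendix Lemma \ref{Lem_HSZ} (the detailed version of Han et al.'s Lemma 2.1) would presumably be invoked.

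For general $t\in[0,1]$, I would use the self-similarity of $f$. Hypothesis (H1) gives $f(x)=\phi(x)+b^{-1}f(bx)$ in the sense of periodic extensions, so increments of $f$ over $[jb^{-n},(j+1)b^{-n}]$ inside a $b$-adic interval $[kb^{-p},(k+1)b^{-p}]$ equal $b^{-p}$ times increments of the periodic $f$ over level-$(n-p)$ intervals, plus the contributions of the first $p$ terms $\sum_{m<p}b^{-m}\phi(b^mt)$. The latter are Lipschitz, so they change each increment by only $O(b^{-n})$. The sum over a level-$p$ block then behaves like $b^{-p}V^{\Theta,1}_{n-p}(f)$ up to the factor $\sqrt{(n-p)\ln b}/\sqrt{n\ln b}\to1$ and an error term. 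Summing over the $\lfloor tb^p\rfloor$ full blocks, adding the $O(b^{-p})$ contribution of the partial block, and letting $n\to\infty$ and then $p\to\infty$ gives the linear function $t\cdot\langle f\rangle^\Theta(1)$.

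The main obstacle is controlling the Lipschitz perturbation inside $\Theta$. The argument of $\Theta$ is of order $b^{-n}\sqrt n$ with fluctuations of order $b^{-n}$, and I need the bound $\vert\Theta(x+\delta)-\Theta(x)\vert\lesssim\delta/\sqrt{n}$ for $x,\delta\lesssim nb^{-n}$. Summed over $b^{n-p}$ intervals, this gives an error of order $b^{-p}/\sqrt n\to0$; it follows from $\Theta'(x)\approx 1/\sqrt{-\ln x}$ on this range. An alternative I would keep in reserve is to rerun the probabilistic representation on a block, with $R$ uniform on the sub-block, yielding the same martingale with an initial shift that does not affect the CLT.
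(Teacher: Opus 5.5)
Your $t=1$ argument is the paper's Step 1: bounded increments give the conditional Lindeberg condition, Lemma \ref{Lem_quadratic_Z_almost} gives the bracket, $\EE(Z_n^2)/n=\EE(\langle Z\rangle_n)/n$ gives uniform integrability, and the computation of Lemma \ref{Lem_HSZ} finishes.

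There is one slip. The denominator $\sqrt{\ln b-\ln\vert Z_n\vert/n}$ does \emph{not} tend to $\sqrt{\ln b}$ uniformly on $\{Z_n\neq 0\}$, because for a general $\phi$ nothing keeps $\vert Z_n\vert$ away from $0$. The bound $\ln\vert Z_n\vert\leq\ln(Cn)$ only controls the denominator from below, which gives the $\limsup$. For the $\liminf$ you must restrict to $\{\vert Z_n\vert\geq 1\}$, where the denominator is at most $\sqrt{\ln b}$. You then note that $\{0<\vert Z_n\vert<1\}$ contributes at most $1/\sqrt{n\ln b}$. This is exactly Step 2 of Lemma \ref{Lem_HSZ}, so invoking that lemma, as you suggest, repairs it.

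For $t\in[0,1)$ your route is genuinely different from the paper's, and its estimates hold.

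The paper stays inside the representation $V^{\Theta,t}_n(f)=b^n\EE\big(\Theta(b^{-n}\vert Z_n\vert)\bbone_{\{b^{-n}R_n\leq t\}}\big)$. Up to an error $\delta\geq b^{-m}$, the event $\{b^{-n}R_n\leq t\}$ depends only on the top digits $R_{m,n}=R_n-R_{n-m}$. These are independent of $Z_{n-m}$, and the last $m$ increments of $Z_n$ are $O(m)$. The paper then redoes both the upper and lower bounds of Lemma \ref{Lem_HSZ} for $Z_{n-m}$ (via Lemma \ref{Lem_HSZ_aux}), multiplied by $\PP(b^{-n}R_n\leq t\pm\delta)\to t\pm\delta$.

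You instead argue deterministically. From $F(x)=\sum_{m<p}b^{-m}\phi(b^mx)+b^{-p}F(b^px)$ with $F$ $1$-periodic, each level-$n$ increment in a level-$p$ block is $b^{-p}$ times a level-$(n-p)$ increment plus a perturbation of size at most $pCb^{-n}$. All increments are at most $Cnb^{-n}$, and $\Theta'$ is increasing with $\Theta'(Cnb^{-n})=O(n^{-1/2})$. Hence the perturbations cost $O(p/\sqrt n)$ summed over all $b^n$ intervals. Meanwhile $\Theta(b^{-p}y)/(b^{-p}\Theta(y))=\sqrt{-\ln y/(-\ln y+p\ln b)}\to 1$ uniformly for $0<y\leq C(n-p)b^{-(n-p)}$. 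Sandwiching between $\lfloor tb^p\rfloor$ and $\lfloor tb^p\rfloor+1$ full blocks (using $\Theta\geq 0$), letting $n\to\infty$ and then $p\to\infty$, gives $t\cdot\langle f\rangle^\Theta(1)$.

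What your reduction buys is that linearity in $t$ becomes a purely structural consequence of self-similarity once the $t=1$ limit exists. It uses only that $\phi$ is Lipschitz, $1$-periodic and vanishes on $\ZZ$, not (H3) or the martingale CLT. The paper's route avoids the functional equation and the analytic estimates on $\Theta'$, reusing the probabilistic machinery already set up for $t=1$. The price is that it repeats the $\limsup$/$\liminf$ arguments instead of reducing to the $t=1$ statement.
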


\begin{Rem}\label{Parseval}
Assume that the conditions of Theorem \ref{Thm_main} hold.
Then $\phi'\in L^{2}([0,1])$ (see part (i) of Remark \ref{phizero}) 
 and it has Fourier coefficients $(2\pi \ii j\,c_j)_{j\in\ZZ}$ following from \eqref{phi_Fourier_series},
 the hypotheses (H1) and (H2) and Theorem 2.2 in Folland \cite{Fol}.
Using Parseval's identity (see, e.g., Folland \cite[page 79]{Fol} or part (a) of Theorem 5.5 in Chapter I of Katznelson \cite{Kat})
 and that $c_{j}=0$ for $j\in b\ZZ\setminus\{0\}$ (by hypothesis (H3)), we have
$$\int_0^1 (\phi'(x))^2\,\dd x=4\pi^2 \sum_{j\notin b\ZZ} j^2 \vert c_j\vert^2,$$
and we may rewrite the statement in Theorem \ref{Thm_main} as
$$
 \langle f \rangle^{\Theta}(t)
           = t\cdot \sqrt{\frac{8\pi}{\ln(b)} \sum_{j\notin b\ZZ} j^2 \vert c_j\vert^2}, \qquad t\in[0,1].
$$
\proofend           
\end{Rem}

\noindent{\bf Proof of Theorem \ref{Thm_main}.}
We follow the method of the proof of Theorem 1.3 in Han et al.\  \cite{HanSchZha}.

If $\phi$ is identically zero, then $f$ is identically zero as well, and in this case, the statement readily holds.
In what follows, assume that $\phi$ is not identically zero. 
In this case we have $\int_0^1 (\phi'(x))^2\,\dd x\in(0,\infty)$ by part (ii) of Remark \ref{phizero}.

{\sl Step 1.}
By Lemma \ref{Lem_quadratic_Z_exp}, we have that
 \begin{align}\label{help36}
  \frac{1}{n} \EE(\langle Z \rangle_n)
      \to  \int_0^1 (\phi'(x))^2\,\dd x
      \qquad \text{as $n\to\infty$.}
 \end{align}
By Proposition \ref{Pro_Z_martingale} and the definition of quadratic variation, $(Z_n^2 - \langle Z \rangle_n)_{n\in\ZZ_+}$ is a martingale
 with respect to the filtration $(\cF_n)_{n\in\ZZ_+}$ such that $Z_0^2 - \langle Z \rangle_0 =0$ (where $(\cF_n)_{n\in\ZZ_+}$ is introduced 
 in  Proposition \ref{Pro_Z_martingale}), and hence $\EE(Z_n^2 - \langle Z \rangle_n)=0$ for all $n\in\ZZ_+$.
Therefore, by \eqref{help36}, we have
 \[
   \frac{1}{n} \EE(Z_n^2) = \frac{1}{n} \EE(\langle Z \rangle_n) \to \int_0^1 (\phi'(x))^2\,\dd x
    \qquad \text{as $n\to\infty$.}
 \]
In particular, the sequence
 \begin{align}\label{help34}
   \Big(\frac{1}{n}\EE(Z_n^2)\Big)_{n\in\NN}\text{ is bounded.}
 \end{align}
Under the hypotheses (H1) and (H2), by Lemmas \ref{Lemma_Lipcont_1} and \ref{Lemma_Lipcont_2}, we have that $\phi$
 is Lipschitz continuous. 
Hence, by \eqref{Y_estimate} with $\beta=1$, we have that $\vert Y_k\vert\leq C$ for all $k\in\NN$.
Therefore, for any $\vare>0$, there exists an $n_\vare\in\NN$ such $Y_k^2 < n\vare^2$ for all $n\geq n_\vare$ and $k\in\NN$.
Consequently, as $n\to\infty$ we have $\PP$-almost surely
 \begin{align}\label{help_Lindeberg}
  \begin{split}
  \sum_{k=1}^n  \EE\left(  \left(\frac{1}{\sqrt{n}} Y_k\right)^2  \bbone_{\{ \vert \frac{1}{\sqrt{n}} Y_k \vert\geq \vare  \}} \bigg| \cF_{k-1} \right)
   = \frac{1}{n} \sum_{k=1}^{n_\vare-1} \EE\left(Y_k^2 \bbone_{\{  Y_k^2 \geq n\vare^2\}} \Big| \cF_{k-1}\right)
     \to 0,
 \end{split}
 \end{align}
 i.e., the conditional Lindeberg condition holds.
Using Proposition \ref{Pro_Z_martingale} with $\beta=1$, Lemma \ref{Lem_quadratic_Z_almost}, \eqref{help_Lindeberg}
 and that $\phi$ is not the identically zero function, we can see that the conditions of the central limit theorem
 for zero-mean, square-integrable martingale arrays (see, e.g., Theorem 3.2 and Corollary 3.1 in Hall and Heyde \cite{HalHey})
 are satisfied, and hence we can obtain that
  \begin{align}\label{help_Z_CLT}
  \frac{1}{\sqrt{n}} Z_n = \frac{1}{\sqrt{n}} \sum_{k=1}^n Y_k \distr \cN\left(0,  \int_0^1 (\phi'(x))^2\,\dd x \right)
  \qquad \text{as $n\to\infty$,}
  \end{align}
  where $\int_0^1 (\phi'(x))^2\,\dd x \in(0,\infty)$ by part (ii) of Remark \ref{phizero}.
Using \eqref{Y_estimate} with $\beta=1$, \eqref{help34} and \eqref{help_Z_CLT}, one can see that
 the conditions of Lemma \ref{Lem_HSZ} are satisfied for the sequence $(Z_n)_{n\in\ZZ_+}$
 with $\sigma^2:=\int_0^1 (\phi'(x))^2\,\dd x$. Consequently, we obtain that 
 \begin{align}\label{help35}
   b^n \EE\left( \Theta\left( b^{-n} \vert Z_n \vert \right) \right)
      \to \sqrt{ \frac{2}{\pi\ln(b)} \int_0^1 (\phi'(x))^2\,\dd x }
      \qquad \text{as $n\to\infty$.}
 \end{align}
 Using \eqref{help15_Theta} and \eqref{help35}, we have the statement for $t=1$.

{\sl Step 2.}
Similarly, as on page 5 in Han et al.\  \cite{HanSchZha}, one can extend the result to the case $t\in[0,1)$,
 detailed as follows.
Let $t\in[0,1)$ be fixed.
By \eqref{Theta_var_Def}, the paragraph after Definition \ref{Def_cont_Theta_variation},
 and using that $R_n$ is uniformly distributed on $\{0,1,,\ldots,b^n-1\}$, we get that
 \begin{align*}
  V^{\Theta,t}_n(f) &= \sum_{k=0}^{\lfloor tb^n\rfloor} \Theta(\vert f((k+1)b^{-n}) - f(kb^{-n})\vert)
                      =  \sum_{k=0}^{b^n-1} \Theta(\vert f((k+1)b^{-n}) - f(kb^{-n})\vert) \bbone_{[0,t]}(kb^{-n}) \\
                     &= b^n \EE\big( \Theta(\vert f((R_n+1)b^{-n}) - f(R_n b^{-n})\vert) \bbone_{[0,t]}(R_n b^{-n}) \big)
                     = b^n \EE\big( \Theta(b^{-n}\vert Z_n \vert) \bbone_{[0,t]}(R_n b^{-n}) \big)  
 \end{align*}
 for sufficiently large $n\in\NN$, where, at the last equality, we used that, by page 3 in Han et al.\ \cite{HanSchZha} (or by the proof of Lemma 3.1 in Barczy and Kern \cite{BarKer}), 
 \begin{align*}
  f((R_n+1)b^{-n}) - f(R_n b^{-n}) = b^{-n}\sum_{k=1}^n Y_k = b^{-n} Z_n \quad\text{ for all }n\in\NN.
 \end{align*}
Note that, by Lemma \ref{Lem_p_var_expression}, $\EE\big( \Theta(b^{-n}\vert Z_n \vert) \bbone_{[0,t]}(R_n b^{-n}) \big)$
 is indeed well-defined for sufficiently large $n\in\NN$.
Therefore, using \eqref{def_Theta},  for sufficiently large $n\in\NN$, we get that 
 \begin{align*}
  V^{\Theta,t}_n(f) 
    &= b^n \EE\left( \frac{b^{-n} \vert Z_n\vert}{\sqrt{-\ln(b^{-n} \vert Z_n\vert)}}
                                    \bbone_{\{ b^{-n} \vert Z_n\vert \ne 0\}} \bbone_{[0,t]}(R_n b^{-n})  \right)\\
    &= \EE\left( \frac{\vert Z_n\vert}{\sqrt{ n\ln(b) - \ln(\vert Z_n\vert)}}
                                    \bbone_{\{ \vert Z_n\vert > 0 \} }\bbone_{ \{ b^{-n} R_n\leq t \}}  \right).                                   
 \end{align*}   
In what follows, let $\delta>0$ and $m\in\NN$ be such that $b^{-m}\leq \delta$.
Further, let 
 \begin{align*}
   R_{m,n} := R_n - R_{n-m}
           = \sum_{i=1}^n U_i b^{i-1} - \sum_{i=1}^{n-m} U_i b^{i-1}
           = \sum_{i=n-m+1}^n U_i b^{i-1}\qquad\text{ for all }n\geq m.
 \end{align*}
Hence, using that $R_{n-m}$ is nonnegative, it holds that 
 \begin{align}\label{help_Han_tsmall1_01}
   \{ b^{-n} R_n\leq t \} 
      = \{ b^{-n} (R_{m,n} + R_{n-m}) \leq t \}
      \subseteq \{ b^{-n} R_{m,n} \leq t \}
      \qquad\text{ for all }n\geq m.
 \end{align}
 
{\sl Step 3.} 
Let $t\in[0,1)$ be fixed.
We show that
 \begin{align}\label{help_Han_7}
  \limsup_{n\to\infty} V^{\Theta,t}_n(f) 
     \leq  t\cdot \sqrt{\frac{2\sigma^2}{\pi\ln(b)}},
 \end{align} 
 where recall that $\sigma^2 = \int_0^1 (\phi'(x))^2\,\dd x$.
By Step 1 in the proof of Lemma \ref{Lem_HSZ}, for all $\gamma\in(0,\ln(b))$, there exists an $n_0\in\NN$ such that 
 $n\ln(b) - \ln(Cn)>n\gamma$ for all $n\geq n_0$,
 and hence $\PP$-almost surely it holds that
 \[
  \sqrt{n\ln(b) - \ln(\vert Z_n\vert)}\bbone_{\{ \vert Z_n \vert>0 \}} \geq \sqrt{n\gamma}\bbone_{\{ \vert Z_n \vert>0 \}} 
   \qquad \text{for all $n\geq n_0$,}
 \]
 where $C>0$ is given by \eqref{Y_estimate} with $\beta=1$, i.e., $\vert Y_n\vert\leq C$ for all $n\in\NN$.
Hence, if $n\geq \max(m,n_0)$, then, using that $Z_n = Z_{n-m} + \sum_{k=n-m+1}^n Y_k$ for all $n\geq m$ and \eqref{help_Han_tsmall1_01},
 we get 
 \begin{align}\label{help_Han_tsmall1_02}
 \begin{split}
 V^{\Theta,t}_n(f) 
  &\leq \EE\left( \frac{\vert Z_n\vert}{\sqrt{n\gamma}}
                                    \bbone_{\{ \vert Z_n\vert > 0 \} }\bbone_{ \{ b^{-n} R_n\leq t \}}  \right)
  = \frac{1}{\sqrt{ n\gamma }} \EE(\vert Z_n\vert \bbone_{ \{ b^{-n} R_n\leq t \}}  ) \\ 
  &\leq \frac{1}{\sqrt{ n \gamma } } \EE(\vert Z_{n-m}\vert \bbone_{ \{ b^{-n} R_n\leq t \}}  )
         + \frac{1}{\sqrt{ n \gamma } }  \EE\left(\left\vert \sum_{k=n-m+1}^n Y_k \right\vert \bbone_{ \{ b^{-n} R_n\leq t \}}  \right)\\
  &\leq  \frac{1}{\sqrt{ n \gamma } } \EE(\vert Z_{n-m}\vert \bbone_{ \{ b^{-n} R_{m,n}\leq t \}}  )       
         + \frac{1}{\sqrt{ n \gamma } }  \EE\left(\left\vert \sum_{k=n-m+1}^n Y_k \right\vert \right). 
 \end{split}         
 \end{align}  
Here, by \eqref{Y_estimate} with $\beta=1$ we have 
 \begin{align}\label{help_Han_9}
   \frac{1}{\sqrt{ n \gamma } }  \EE\left(\left\vert \sum_{k=n-m+1}^n Y_k \right\vert \right)
     &\leq \frac{1}{\sqrt{ n \gamma } } \sum_{k=n-m+1}^n \EE(\vert Y_k\vert)
     = \frac{Cm}{\sqrt{ n \gamma } } \to 0
     \qquad \text{as $n\to\infty$.}
 \end{align} 
Further, note that the random variables
 \begin{align*}
  Z_{n-m} = \sum_{k=1}^{n-m} Y_k \qquad \text{and}\qquad R_{m,n} = \sum_{i=n-m+1}^n U_i b^{i-1}
 \end{align*}
 are independent, since, for each $k\in\NN$, the random variable $Y_k$ depends on $R_k$, which depends on $U_1,\ldots,U_k$,
 and hence, for $n\geq m+1$, the random variable $Z_{n-m}$ depends on $U_1,\ldots,U_{n-m}$; 
 and $(U_i)_{i\in\NN}$ are independent.  
Therefore, by \eqref{help_Han_tsmall1_01} and \eqref{help_Han_tsmall1_02}, we obtain that 
 \begin{align*}
  \limsup_{n\to\infty} V^{\Theta,t}_n(f) 
   & \leq \frac{1}{\sqrt{\gamma}}
          \limsup_{n\to\infty} \left(  \frac{1}{\sqrt{n}} \EE(\vert Z_{n-m}\vert) \PP(b^{-n} R_{m,n}\leq t ) \right) \\
   & \leq \frac{1}{\sqrt{\gamma}}
          \limsup_{n\to\infty} \left(\sqrt{\frac{n-m}{n}} \EE\left(\left\vert \frac{1}{\sqrt{n-m}} Z_{n-m}\right\vert\right) \right)          
          \limsup_{n\to\infty} \PP(b^{-n} R_{m,n}\leq t ).
 \end{align*}
Using \eqref{help_Han_2b} with $\xi_n:= Z_n$, $n\in\ZZ_+$, and $I_n:=I:=(0,\infty)$, $n\in\NN$
 (the conditions of Lemma \ref{Lem_HSZ_aux} hold due to \eqref{help34} and \eqref{help_Z_CLT}), we have that 
 \begin{align*}
   \lim_{n\to\infty} \EE\left(\left\vert \frac{1}{\sqrt{n-m}} Z_{n-m}\right\vert\right)
      = \frac{1}{\sqrt{2\pi \sigma^2}} \int_{-\infty}^\infty
           \vert z\vert \ee^{-\frac{z^2}{2\sigma^2}}\,\dd z
      = \sqrt{\frac{2\sigma^2}{\pi}}.     
 \end{align*} 
Note that $R_{n-m}< b^{n-m}$ (due to the fact that $R_{n-m}$ is uniformly distributed on $\{0,1,\ldots,b^{n-m}-1\}$),
 and hence, by the choice of $\delta$, we have $b^{-n} R_{n-m} < b^{-m}\leq \delta$. 
Consequently, we have $b^{-n} R_n - \delta = b^{-n}(R_{m,n} + R_{n-m}) -\delta  \leq b^{-n} R_{m,n}$, 
 and it yields that
 \begin{align*}
  \limsup_{n\to\infty} V^{\Theta,t}_n(f) 
      & \leq \sqrt{\frac{2\sigma^2}{\gamma\pi}}
           \limsup_{n\to\infty} \PP(b^{-n} R_{m,n}\leq t )\\
      & \leq \sqrt{\frac{2\sigma^2}{\gamma\pi}}
           \limsup_{n\to\infty} \PP(b^{-n} R_n \leq t + \delta )= \sqrt{\frac{2\sigma^2}{\gamma\pi}} (t+\delta),     
 \end{align*}
 where the last equality follows, since $R_n$ is uniformly distributed on the set $\{0,1,\ldots,b^n-1\}$ and hence we have $ \PP(b^{-n} R_n \leq t + \delta )=\frac{\lfloor b^n(t+\delta)\rfloor}{b^n} \to t+\delta$.
By taking the limits $\gamma\uparrow \ln(b)$ and $\delta\downarrow 0$, we have \eqref{help_Han_7}.
   
{\sl Step 4.} 
Let $t\in[0,1)$ be fixed.
We show that
 \begin{align}\label{help_Han_8}
  \liminf_{n\to\infty} V^{\Theta,t}_n(f) 
     \geq  t\cdot \sqrt{\frac{2\sigma^2}{\pi\ln(b)}}.   
 \end{align} 
For $t=0$, it holds trivially, and hence in what follows we can assume that $t\in(0,1)$. 
Let $\vare>0$, $\delta\in(0,t)$ and $m\in\NN$ be such that $b^{-m}\leq \delta$.
Then for all $n\geq \max(m,\frac{1}{\vare^2})$, as follows from Step 2 in the proof of Lemma \ref{Lem_HSZ}, we have
 \begin{align*}
 \bbone_{\{ \vert\frac{1}{\sqrt{n}}Z_n\vert  \geq \vare \} }
     \sqrt{n\ln(b) - \ln(\vert Z_n\vert)} 
  \leq  \bbone_{\{ \vert\frac{1}{\sqrt{n}}Z_n\vert  \geq \vare \} }
       \sqrt{n\ln(b)}.
 \end{align*}
Recall from \eqref{Y_estimate} with $\beta=1$ that $\vert Y_k\vert\leq C$ for all $k\in\NN$.
Let $n_1\in\NN$ be such that $\vare\sqrt{n_1}\geq mC$.
If $n\geq \max(n_1,m,\frac{1}{\vare^2})$ is sufficiently large, then we get that
 \begin{align*}
  V^{\Theta,t}_n(f)
   &= \EE\left( \frac{\vert Z_n\vert}{\sqrt{ n\ln(b) - \ln(\vert Z_n\vert)}}
                                    \bbone_{\{ \vert Z_n\vert > 0 \} }\bbone_{ \{ b^{-n} R_n\leq t \}}  \right)\\
   &\geq \EE\left( \frac{\vert Z_n\vert}{\sqrt{ n\ln(b)}}
                                    \bbone_{\{ \vert \frac{1}{\sqrt{n}} Z_n\vert \geq \vare \} }\bbone_{ \{ b^{-n} R_n\leq t \}}  \right)\\              
   &\geq \EE\left( \frac{\vert Z_n\vert}{\sqrt{ n\ln(b)}}
                                    \bbone_{\{ \vert \frac{1}{\sqrt{n}} Z_n\vert \geq \vare \} }\bbone_{ \{ b^{-n} R_{m,n}\leq t - \delta \}}  \right),                                  
 \end{align*} 
 where the last inequality follows from $b^{-n} R_n - \delta \leq b^{-n} R_{m,n}$, as argued in Step 3.
Then, for each $n\geq \max(n_1,m,\frac{1}{\vare^2})$, by the reverse triangular inequality, we have that 
 \begin{align*}
  \vert Z_n\vert = \left\vert Z_{n-m} + \sum_{k=n-m+1}^n Y_k\right\vert
                 \geq \vert Z_{n-m}\vert - \left\vert \sum_{k=n-m+1}^n Y_k\right\vert
                 \geq \vert Z_{n-m}\vert - mC 
                 \geq \vert Z_{n-m}\vert - \vare\sqrt{n_1}.
 \end{align*}
Hence, if $n\geq \max(n_1,m,\frac{1}{\vare^2})$ and $\frac{1}{\sqrt{n}}\vert Z_{n-m}\vert\geq 2\vare$, then we have
 \[
  2\vare \leq \frac{1}{\sqrt{n}}\vert Z_n\vert + \vare\sqrt{\frac{n_1}{n}}
         \leq  \frac{1}{\sqrt{n}}\vert Z_n\vert + \vare,
 \]
 and hence $\frac{1}{\sqrt{n}}\vert Z_n\vert\geq \vare$ holds.
Consequently, if $n\geq \max(n_1,m,\frac{1}{\vare^2})$ is sufficiently large, then we get that   
 \begin{align*}
  V^{\Theta,t}_n(f)
    &\geq \EE\left( \frac{\vert Z_{n-m}\vert}{\sqrt{ n\ln(b)}}
                                    \bbone_{\{ \vert \frac{1}{\sqrt{n}} Z_n\vert \geq \vare \} }\bbone_{ \{ b^{-n} R_{m,n}\leq t - \delta \}}  \right) \\
    &\phantom{\geq\;}
          - \frac{1}{\sqrt{ n\ln(b)}}                           
             \EE\left( \left\vert  \sum_{k=n-m+1}^n Y_k \right\vert
             \bbone_{\{ \vert \frac{1}{\sqrt{n}} Z_n\vert \geq \vare \} }\bbone_{ \{ b^{-n} R_{m,n}\leq t - \delta \}}  \right)\\
    &\geq \EE\left( \frac{\vert Z_{n-m}\vert}{\sqrt{ n\ln(b)}}
                                    \bbone_{\{ \vert \frac{1}{\sqrt{n}} Z_{n-m}\vert \geq 2\vare \} }\bbone_{ \{ b^{-n} R_{m,n}\leq t - \delta \}}  \right) 
          - \frac{1}{\sqrt{ n\ln(b)}}                           
             \EE\left( \left\vert  \sum_{k=n-m+1}^n Y_k \right\vert \right).             
  \end{align*}
Similarly to \eqref{help_Han_9}, we have
 \begin{align}\label{help37}
   \frac{1}{\sqrt{ n\ln(b)}}                           
             \EE\left( \left\vert  \sum_{k=n-m+1}^n Y_k \right\vert \right)
             \to 0
             \qquad \text{as $n\to\infty$.}
 \end{align}
As argued in Step 3, the random variables $Z_{n-m}$ and $R_{m,n}$ are independent, hence we get 
 \begin{align}\label{help_00}
 \begin{split} 
  &\EE\left( \frac{\vert Z_{n-m}\vert}{\sqrt{ n\ln(b)}}
                                    \bbone_{\{ \vert \frac{1}{\sqrt{n}} Z_{n-m}\vert \geq 2\vare \} }\bbone_{ \{ b^{-n} R_{m,n}\leq t - \delta \}}  \right) \\
  &= \frac{1}{\sqrt{\ln(b)}} \sqrt{\frac{n-m}{m}}
        \EE\left( \left\vert\frac{1}{\sqrt{n-m}} Z_{n-m}\right\vert
                                    \bbone_{\{ \vert \frac{1}{\sqrt{n-m}} Z_{n-m}\vert \geq 2\vare \sqrt{\frac{n}{n-m}}\} }  \right)
       \PP(b^{-n} R_{m,n}\leq t - \delta).                                                               
  \end{split}     
  \end{align}
Using \eqref{help_Han_2b} with $\xi_n:=Z_{n-m}$, $n\geq m$, $I_{n}:=[2\vare \sqrt{\frac{n}{n-m}},\infty)$, $n\geq m$, and $I:=(2\vare,\infty)$
 (the conditions of Lemma \ref{Lem_HSZ_aux} hold due to \eqref{help34} and \eqref{help_Z_CLT}), we have 
 \begin{align*}
   \lim_{n\to\infty} \EE\left( \left\vert\frac{1}{\sqrt{n-m}} Z_{n-m}\right\vert
                                    \bbone_{\{ \vert \frac{1}{\sqrt{n-m}} Z_{n-m}\vert \geq 2\vare \sqrt{\frac{n}{n-m}}\} }  \right)
      = \frac{1}{\sqrt{2\pi \sigma^2}} \int_{\{ z\in\RR\,:\, \vert z\vert> 2\vare\} }
           \vert z\vert \ee^{-\frac{z^2}{2\sigma^2}}\,\dd z.
 \end{align*} 
Further, using that 
 \begin{align*}
  b^{-n}R_{m,n} \leq t-\delta \quad  \Longleftrightarrow \quad  
  b^{-n}(R_n - R_{n-m}) \leq t-\delta \quad  \Longleftrightarrow \quad   
  b^{-n}R_n  \leq t-\delta + b^{-n}R_{n-m}, 
 \end{align*} 
  and that $b^{-n}R_{n-m}$ is nonnegative, we have that $\{b^{-n}R_n\leq t-\delta\}\subseteq\{b^{-n}R_{m,n} \leq t-\delta\}$.
Since $R_n$ is uniformly distributed on $\{0,1,\ldots,b^n-1\}$, we get 
 \begin{align*} 
   \PP(b^{-n} R_{m,n}\leq t - \delta)  
      \geq \PP(b^{-n} R_n \leq t - \delta)  
      = \frac{\lfloor b^n(t-\delta)\rfloor}{b^n}
          \to t - \delta
          \qquad \text{as $n\to\infty$.}
 \end{align*}
Therefore, by \eqref{help37} and \eqref{help_00}, we get 
 \begin{align*}
 \liminf_{n\to\infty} V^{\Theta,t}_n(f)
   \geq \frac{1}{\sqrt{\ln(b)}}
        \frac{1}{\sqrt{2\pi \sigma^2}} \int_{\{ z\in\RR\,:\, {\vert z\vert>} 2\vare\} }
           \vert z\vert \ee^{-\frac{z^2}{2\sigma^2}}\,\dd z
          \cdot (t-\delta).
  \end{align*} 
By taking the limits $\vare\downarrow 0$ and $\delta\downarrow 0$, we obtain \eqref{help_Han_8}, as desired.

Finally, \eqref{help_Han_7} and \eqref{help_Han_8} yield the statement.
\proofend

\section{Examples}\label{Section_examples}

First, we apply Theorem \ref{Thm_main} to the Takagi function and recover a result of Han et al.\ \cite[Theorem 1.2]{HanSchZha} 
 (see also part (i) of Theorem \ref{Thm_HSZ_Thms_12_13} for an even $b$) with a different proof.

\begin{Ex}[Takagi function]\label{Ex_Takagi}
Let us consider the function $f$ given by \eqref{help7} with the function $\phi(t):=\min_{z\in\ZZ}\vert z-t\vert$, $t\in\RR$. 
Then $\phi$ is a periodic function with period 1 and, for $t\in[0,1]$, we can also write
 \[
    \phi(t) = \begin{cases}
                 t & \text{if $t\in[0,\frac{1}{2}]$,}\\
                 1-t & \text{if $t\in[\frac{1}{2},1]$,}
             \end{cases}
 \]
which shows that the hypotheses (H1) and (H2) hold. 
It can easily be shown that the Fourier coefficients $c_{j}=\int_0^1 \phi(t)\ee^{-2\pi \ii jt}\,\dd t$, $j\in\ZZ$, of $\phi$ 
 are given by 
 $$
  c_j=\begin{cases}
           -\frac{1}{\pi^2j^2} & \text{if $j$ is odd,}\\
           \frac14 & \text{if $j=0$,}\\
           0 & \text{if $j\not=0$ is even.}
          \end{cases}
 $$
Hence, the hypothesis (H3) also holds for an even $b$. 
Therefore, if $b$ is even, then, by Theorem \ref{Thm_main}, we get
 \begin{align*}
  \langle f \rangle^{\Theta}(t)
           & = t \cdot \sqrt{ \frac{2}{\pi\ln(b)}\int_0^1 (\phi'(x))^2\,\dd x }
              = t \cdot \sqrt{ \frac{2}{\pi\ln(b)}} , \qquad t\in[0,1],
 \end{align*}
 as in part (i) of Theorem \ref{Thm_HSZ_Thms_12_13}. 
We mention that, if $b$ is even, then $\sum_{k\notin b\ZZ} k^2\vert c_k \vert^2=\frac{1}{4\pi^2}<\infty$, 
 but $\sum_{k\notin b\ZZ}  \vert k c_k\vert=\infty$.
Note also that if $b$ is odd, then the hypothesis (H3) does not hold,
 so we cannot apply Theorem \ref{Thm_main}, and, in fact, if $b$ is odd, then a different $\Theta$-variation function 
 appears in part (i) of Theorem \ref{Thm_HSZ_Thms_12_13}. 
\proofend
\end{Ex}

Next, we apply Theorem \ref{Thm_main} to Weierstrass functions, by recovering a result
 of Han et al.\ \cite[Theorem 1.3]{HanSchZha} (see also part (ii) of Theorem \ref{Thm_HSZ_Thms_12_13}).

\begin{Ex}[Weierstrass functions]\label{Ex_Weierstrass}
Let us consider the function $f$ given by \eqref{help7} with  $\phi(t):=\nu \sin(2\pi t) + \varrho \cos(2\pi t) - \varrho$, $t\in\RR$, where $\nu, \varrho\in\RR$.
Then the hypotheses (H1) and (H2) hold. It can easily be shown that the Fourier coefficients 
 $c_{j}=\int_0^1 \phi(t)\ee^{-2\pi \ii j t}\,\dd t$, $j\in\ZZ$, of $\phi$ are given by 
$$c_{j}=\begin{cases}
          \frac{\varrho}{2}-\ii j\frac{\nu}{2} & \text{if $j\in\{-1,1\}$,}\\
           -\varrho & \text{if $j=0$,}\\
           0 & \text{otherwise.}
          \end{cases}$$
Hence the hypothesis (H3) also holds for every $b\in\NN\setminus\{1\}$,
 and we can apply Theorem \ref{Thm_main} to get
 \begin{align*}
  \langle f \rangle^{\Theta}(t)
           & = t \cdot \sqrt{ \frac{2}{\pi\ln(b)}\int_0^1 (\phi'(x))^2\,\dd x }
              = t \cdot \sqrt{ \frac{2}{\pi\ln(b)}  4\pi^2 \int_0^1 ( \nu\cos(2\pi t) - \varrho\sin(2\pi t) )^2\,\dd x } \\
           & = t \cdot 2\sqrt{ \frac{\pi(\nu^2 + \varrho^2)}{\ln(b)} } , \qquad t\in[0,1],
 \end{align*}
where the last equality follows by standard calculations.
\proofend
\end{Ex}

The remaining part of this section is devoted to lacunary functions (series). 
For $d\in\NN\setminus\{1\}$ and $\alpha>0$, let us consider the function $h:\RR\to\CC$,
 \[
  h(t):= \sum_{k=0}^\infty d^{-k\alpha}\ee^{2\pi\ii d^k t}, \qquad t\in\RR,
 \]
 which is a well-defined, continuous and bounded function, since 
 \[
  \vert h(t)\vert \leq \sum_{k=0}^\infty d^{-k\alpha} = \frac{1}{1-d^{-\alpha}}, \qquad t\in\RR.
 \]
The function $h$ is an example of a lacunary function (lacunary power series), see, e.g., Kahane et al.\ \cite{KahWeiWei}. 
The Fourier coefficients $\omega_{\ell}$, $\ell\in\ZZ$, of $h$ are
$$
\omega_{\ell}=\int_{0}^{1}h(t)\ee^{-2\pi \ii \ell t}\,\dd t
   =\sum_{k=0}^{\infty}d^{-k\alpha} \int_0^1\ee^{2\pi \ii (d^{k}-\ell) t}\,\dd t
=\begin{cases}
\ell^{-\alpha} & \text{if $\ell=d^{k}$, $k\in\ZZ_+$,}\\
 0 & \text{otherwise.}
\end{cases}
$$
To get a real-valued function, we consider the real and imaginary parts of $h$, where, in case of the real part, 
 we substract the value at zero to fulfill hypothesis (H1). 
This gives the functions
\begin{align}\label{def_lacunaries}
 \begin{split}
 \phi_{1}(t):= & \Re(h(t))-h(0) =\frac12\big(h(t)+\overline{h(t)}\big)-h(0)\\ 
 = & \sum_{k=0}^\infty d^{-k\alpha}\cos(2\pi d^k t)-\frac1{1-d^{-\alpha}}, \qquad t\in\RR,\\
 \phi_{2}(t):= & \Im(h(t)) =\frac1{2\ii}\big(h(t)-\overline{h(t)}\big) = \sum_{k=0}^\infty d^{-k\alpha}\sin(2\pi d^k t), \qquad t\in\RR,
 \end{split} 
\end{align}
which are called lacunary trigonometric functions (series) in Aistleitner et al.\ \cite{AisBerTic} and fulfill the hypothesis (H1). 
Here the sequence $(d^k)_{k\in\ZZ_+}$ satisfies the classical Hadamard gap condition: $d^{k+1}/d^k = d >1$ for each $k\in\ZZ_+$.
The Fourier coefficients $c_{\ell}^{(j)}=\int_0^1 \phi_{j}(t)\ee^{-2\pi \ii \ell t}\,\dd t$, $\ell\in\ZZ$, 
 of $\phi_{j}$, $j\in\{1,2\}$, are
\begin{align}\label{help_lac_Fourier_coefficients_1}
 \begin{split}
 c_{\ell}^{(1)}
  & = \int_0^1 \left( \frac12\big(h(t)+\overline{h(t)}\big)-h(0) \right)\ee^{-2\pi \ii \ell t} \,\dd t
    = \frac{1}{2}\omega_\ell + \frac{1}{2}\overline{\int_0^1 h(t) \ee^{-2\pi \ii(-\ell)t} \,\dd t } - h(0)\delta_{\ell,0}\\
  & = \frac{1}{2}( \omega_\ell + \overline{\omega_{-\ell}}) - h(0)\delta_{\ell,0}
    = \frac{1}{2}(\omega_\ell + \omega_{-\ell}) - \frac{1}{1-d^{-\alpha}}\delta_{\ell,0}\\
  & =\begin{cases}
             \frac12\,\vert\ell\vert^{-\alpha} & \text{if $\vert\ell\vert=d^{k}$, $k\in\ZZ_+$,}\\
           -\frac1{1-d^{-\alpha}} & \text{if $\ell=0$,}\\
           0 & \text{otherwise,}
          \end{cases}
 \end{split}         
\end{align}
 where $\delta_{\ell,0}:=1$ if $\ell=0$, and $\delta_{\ell,0}:=0$ if $\ell\ne 0$, and, similarly, 
\begin{align}\label{help_lac_Fourier_coefficients_2}
 \begin{split}
c_{\ell}^{(2)}
   = \frac1{2\ii} \int_0^1 (h(t) - \overline{h(t)}) \ee^{-2\pi \ii \ell t}\,\dd t 
   = \frac1{2\ii}(\omega_{\ell}-\omega_{-\ell})
   =\begin{cases}
     \frac{\sign(\ell)}{2\ii}\,\vert\ell\vert^{-\alpha} & \text{if $\vert\ell\vert=d^{k}$, $k\in\ZZ_+$,}\\
      0 & \text{otherwise.}
     \end{cases}          
 \end{split}         
\end{align}
This shows that $\phi_1$ and $\phi_2$ satisfy the hypothesis (H3) in case $b$ and $d$ are relatively prime.

\begin{Lem}\label{lacunaryHoelder}
The following assertions holds for the lacunary trigonometric functions $\phi_{j}$, $j\in\{1,2\}$, defined in \eqref{def_lacunaries}.
\begin{itemize}
\item[(i)]
 If $\alpha\in(0,1)$, then $\phi_{j}$, $j\in\{1,2\}$, is H\"older continuous with exponent $\alpha$.
\item[(ii)]
 If $\alpha=1$, then there exists a constant $C_1\in\RR_{++}$ such that 
  \[
   \vert \phi_{j}(t) - \phi_{j}(s) \vert \leq C_1 \vert t-s\vert \log_d(\vert t-s\vert^{-1})
  \]
  for all $s,t\in[0,1]$ with $0<\vert t-s\vert\leq \frac{1}{2}$, where $j\in\{1,2\}$.
  This yields that $\phi_{j}$, $j\in\{1,2\}$,  is locally H\"older continuous at any point of $(0,1)$ 
   (and hence of $\RR\setminus\ZZ$) with any positive exponent strictly less than $1$.
\item[(iii)] If $\alpha>1$, then $\phi_{j}$, $j\in\{1,2\}$, is continuously differentiable.
\end{itemize}
\end{Lem}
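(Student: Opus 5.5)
The plan is the classical Weierstrass-function splitting argument, applied termwise to the series in \eqref{def_lacunaries}. Fix $j\in\{1,2\}$ and $s,t\in\RR$ with $0<\vert t-s\vert\leq\frac12$. Since $\vert\cos x-\cos y\vert$ and $\vert\sin x-\sin y\vert$ are both at most $\min(2,\vert x-y\vert)$, we have
\[
 \vert\phi_j(t)-\phi_j(s)\vert\leq\sum_{k=0}^\infty d^{-k\alpha}\min\bigl(2,\,2\pi d^k\vert t-s\vert\bigr).
\]
(The constant $-\frac1{1-d^{-\alpha}}$ in $\phi_1$ cancels in the difference.) Choose $N\in\ZZ_+$ with $d^{-N-1}<\vert t-s\vert\leq d^{-N}$, which is possible because $\vert t-s\vert\leq 1$. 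Then split the sum into the low frequencies $k\leq N$, where we use the Lipschitz bound $2\pi d^k\vert t-s\vert$, and the high frequencies $k>N$, where we use the bound $2$.

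For (i), let $\alpha\in(0,1)$. The high-frequency part is at most $2\sum_{k>N}d^{-k\alpha}=\frac{2d^{-(N+1)\alpha}}{1-d^{-\alpha}}\leq C\vert t-s\vert^\alpha$. The low-frequency part is $2\pi\vert t-s\vert\sum_{k=0}^N d^{k(1-\alpha)}\leq C'\vert t-s\vert d^{N(1-\alpha)}$, and since $d^N\leq\vert t-s\vert^{-1}$ this gives $C'\vert t-s\vert^\alpha$. For $\vert t-s\vert>\frac12$ the bound is immediate, because $\phi_j$ is bounded and $\vert t-s\vert^\alpha\geq 2^{-\alpha}$. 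This proves H\"older continuity with exponent $\alpha$.

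For (ii), let $\alpha=1$. The high-frequency part is at most $\frac{2d^{-(N+1)}}{1-d^{-1}}\leq C\vert t-s\vert$. The low-frequency part equals $2\pi\vert t-s\vert(N+1)$, and $N\leq\log_d(\vert t-s\vert^{-1})$. Because $\vert t-s\vert\leq\frac12$ gives $\log_d(\vert t-s\vert^{-1})\geq\log_d 2>0$, the additive terms $\vert t-s\vert$ and $2\pi\vert t-s\vert$ can be absorbed into a multiple of $\vert t-s\vert\log_d(\vert t-s\vert^{-1})$, which yields $C_1$. For the local H\"older claim, fix $\gamma\in(0,1)$. Since $x^{1-\gamma}\log_d(x^{-1})\to0$ as $x\downarrow0$, the function $x^{1-\gamma}\log_d(x^{-1})$ is bounded on $(0,\frac12]$. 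Hence $\vert\phi_j(t)-\phi_j(s)\vert\leq C_\gamma\vert t-s\vert^\gamma$ whenever $\vert t-s\vert\leq\frac12$, in particular for $s,t$ in a neighbourhood of any point of $(0,1)$; periodicity extends this to $\RR\setminus\ZZ$. (In fact the bound is uniform, but only the local statement is required.)

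For (iii), let $\alpha>1$. The termwise derivatives are $\mp2\pi d^{k(1-\alpha)}\sin(2\pi d^kt)$ resp.\ $2\pi d^{k(1-\alpha)}\cos(2\pi d^kt)$, and these are bounded by $2\pi d^{k(1-\alpha)}$, which is summable. The series of $\phi_j$ also converges uniformly. By the Weierstrass M-test and the theorem on termwise differentiation of uniformly convergent series, $\phi_j$ is continuously differentiable.

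The only delicate step is the bookkeeping of the constant in (ii): one has to absorb the $+1$ in $N+1$ and the high-frequency term using the lower bound $\log_d(\vert t-s\vert^{-1})\geq\log_d2$, which holds thanks to the restriction $\vert t-s\vert\leq\frac12$. Everything else is routine summation of geometric series.
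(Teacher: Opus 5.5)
Your proof is correct. For (i) and (ii) it takes a different route from the paper; for (iii) it is the same.

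\textbf{Parts (i) and (ii).} The paper carries out no estimate itself. It obtains (i) and (ii) by specializing parts (ii) and (iii) of Proposition 2.2 in Barczy and Kern \cite{BarKer}, a general modulus-of-continuity result for Weierstrass-type series. The specialization uses $\psi(x)=x^\alpha$, base $d$, building blocks $\cos(2\pi x)-1$ and $\sin(2\pi x)$, $\gamma=1$ and $\xi_m=1$. That proposition gives the estimate on $[0,1]$, so for (i) the paper also needs Lemma \ref{Lemma_Lipcont_2} (periodicity plus vanishing on $\ZZ$) to pass to $\RR$.

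You instead run the classical splitting of the series at the scale $d^{-N}\approx\vert t-s\vert$ directly. Your bookkeeping is sound: the geometric sums, the bound $d^{N(1-\alpha)}\leq\vert t-s\vert^{\alpha-1}$, and the absorption of the $+1$ and the high-frequency term via $\log_d(\vert t-s\vert^{-1})\geq\log_d 2$ all check out.

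Your route has three advantages. It is self-contained. It works on all of $\RR$ at once, so the extension lemma is not needed. In (ii) it gives an explicit $C_1$ and the global bound $\vert\phi_j(t)-\phi_j(s)\vert\leq C_\gamma\vert t-s\vert^\gamma$ for every $\gamma\in(0,1)$, which is uniform H\"older continuity on $\RR$ and stronger than the local statement at points of $\RR\setminus\ZZ$ asserted in the lemma. The paper's route buys brevity and places the example inside a general framework, with arbitrary $\psi$, coefficients $\xi_m$ and H\"older exponent $\gamma$ of the building block.

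\textbf{Part (iii).} Your argument coincides with the paper's: the Weierstrass M-test applied to the termwise differentiated series, following Theorem 2.6 in Folland \cite{Fol}. One small correction: the derivative of the $\phi_1$-terms is $-2\pi d^{k(1-\alpha)}\sin(2\pi d^k t)$, with a definite minus sign rather than $\mp$.
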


\begin{proof}
(i). It follows from part (ii) of Proposition 2.2 in Barczy and Kern \cite{BarKer}
 with the choices $\psi(x):=x^\alpha$, $x\in\RR_{++}$, $b:=d$,
 $\phi_1:\RR\to\RR$, $\phi_1(x):=\cos(2\pi x) - 1$, $x\in\RR$, $\phi_2:\RR\to\RR$, $\phi_2(x):=\sin(2\pi x)$, $x\in\RR$, $\gamma=1$ and $\xi_m=1$, $m\in\ZZ_+$.
Then $\psi(b^{-1}) = b^{-\alpha}> b^{-1}$ and hence  part (ii) of Proposition 2.2 in Barczy and Kern \cite{BarKer} yields that 
 $\phi_j$, $j\in\{1,2\}$, restricted onto $[0,1]$ is H\"older continuous with exponent $-\log_b(b^{-\alpha})=\alpha$.
Since $\phi_j$, $j\in\{1,2\}$, is periodic with period 1 and vanishes on $\ZZ$, by Lemma \ref{Lemma_Lipcont_2},
 we get that $\phi_j$, $j\in\{1,2\}$, is H\"older continuous (on $\RR$) with exponent $\alpha$.
 In case of $d=2$, the assertion of part (i) for the complex-valued lacunary series $h$ can be also found 
  in Stein and Shakarchi \cite[part (c) of Exercise 15 on page 91]{SteSha}.\\[1ex]
(ii). This readily follows from part (iii) of Proposition 2.2 in Barczy and Kern \cite{BarKer}.\\[1ex]
(iii). We follow the proof of Theorem 2.6 in Folland \cite{Fol}.
By \eqref{help_lac_Fourier_coefficients_1} and \eqref{help_lac_Fourier_coefficients_2}, we have 
\begin{align}\label{help_lac_Fourier_coefficients_3}
  \sum_{\ell\in\ZZ} \vert \ell c_\ell^{(j)} \vert
     = 2\sum_{k=0}^\infty d^k \frac{1}{2} d^{-k\alpha}
     = \sum_{k=0}^\infty d^{-k(\alpha-1)}
     = \frac{1}{1-d^{-(\alpha-1)}}
     <\infty, \qquad j\in\{1,2\},
 \end{align}
  and hence, by the Weierstrass M-test, the series $2\pi \sum_{k=0}^\infty d^{-k(\alpha-1)} \cos(2\pi d^k t)$, $t\in\RR$,
 and $2\pi \sum_{k=0}^\infty d^{-k(\alpha-1)} \sin(2\pi d^k t)$, $t\in\RR$, are absolutely and uniformly convergent,
 therefore they define continuous functions, which are the derivatives of $\phi_1$ and $\phi_2$, respectively.
\end{proof}

Lemma \ref{lacunaryHoelder} shows that for $\alpha\in(0,1)$, the lacunary trigonometric functions $\phi_{j}$, $j\in\{1,2\}$, 
 fulfill the hypothesis (H2$^{\ast}$), and for $\alpha>1$, they even fulfill the stronger hypothesis (H2). 
Note that for $\alpha\in(0,1]$, it is known, 
 by a classical result of Hardy \cite[Theorem 1.31]{Har}, that $\phi_{j}$, $j\in\{1,2\}$, 
  is a Weierstrass function that is continuous, but nowhere differentiable. 
Hence the hypothesis (H2) does not hold for $\phi_j$, $j\in\{1,2\}$, in case $\alpha\in(0,1]$.

\begin{Ex}[Lacunary trigonometric functions]\label{Ex_lacunary}
Let us consider the function $f$ defined by \eqref{help7}, where the function $\phi$ is one of the lacunary trigonometric functions 
 $\phi_{j}$, $j\in\{1,2\}$, given in \eqref{def_lacunaries}.
From the previous discussion, we can conclude that, if $\alpha>1$ and $b$ and $d$ are relatively prime, then
 the lacunary trigonometric function $\phi_{j}$, $j\in\{1,2\}$, fulfills the hypotheses (H1), (H2) and (H3). 
 Hence, by Theorem \ref{Thm_main} together with Remark \ref{Parseval}, if $\alpha>1$ and $b$ and $d$ are relative prime, then
 \[
        \langle f \rangle^{\Theta}(t)
           = t\cdot \sqrt{ \frac{2}{\pi\ln(b)}\int_0^1 (\phi'(x))^2\,\dd x }
           = t\cdot \sqrt{ \frac{8\pi}{\ln(b)}  \sum_{\ell\notin b\ZZ} \ell^2 \vert c_\ell\vert^2 }, \qquad t\in[0,1].
 \]
Here, similarly to \eqref{help_lac_Fourier_coefficients_3}, we have 
 \[
  \sum_{\ell\notin b\ZZ} \ell^2 \vert c_\ell \vert^2 
    = 2\sum_{k=0}^\infty d^{2k} \frac{1}{4} d^{-2k\alpha}
    = \frac12\sum_{k=0}^\infty d^{-2k(\alpha-1)}
    = \frac{1}{2(1-d^{2(1-\alpha)})}.
 \]
Consequently, if $\alpha>1$ and $b$ and $d$ are relatively prime, then the function
\[
        \langle f \rangle^{\Theta}(t)
           = t\cdot \sqrt{ \frac{4\pi}{ (1-d^{2(1-\alpha)}) \ln(b)} }, \qquad t\in[0,1],
 \] 
 is the continuous $\Theta$-variation function of a lacunary trigonometric function given by \eqref{def_lacunaries} 
 along the sequence of $b$-adic partitions.
\proofend
\end{Ex}

\vspace*{5mm}

\appendix

\vspace*{5mm}

\noindent{\bf\Large Appendix}

\section{Comparison of two canonical representations of real numbers in $[0,1)$}\label{App_A}

In this appendix, first we recall a $b$-adic representation of a real number in $[0,1)$, 
 where $b\in\NN\setminus\{1\}$, see, e.g., Vakil \cite[Theorem B.2.4]{Vak}.

\begin{Thm}\label{Thm_Vakil}
Let $b\in\NN\setminus\{1\}$.
For each real number $s\in[0,1)$, there exists a unique sequence $(a_n)_{n\in\NN}$ with values in $\{0,1,\ldots,b-1\}$ such that
 $s = \sum_{n=1}^\infty \frac{a_n}{b^n}$, and $a_n<b-1$ for infinitely many $n\in\NN$
 (i.e., there does not exist an $n_0\in\NN$ such that $a_n = b-1$ for all $n\geq n_0$).
One calls such a representation of $s$ a canonical (standard) $b$-adic representation.
\end{Thm}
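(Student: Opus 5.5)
To prove Theorem \ref{Thm_Vakil} I will construct the digits greedily, check convergence and the non-degeneracy condition, and then prove uniqueness by comparing two representations at the first digit where they differ.

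\emph{Existence.} Fix $s\in[0,1)$ and define the remainders $s_0:=s$, $a_n:=\lfloor b s_{n-1}\rfloor$ and $s_n:=b s_{n-1}-a_n$ for $n\in\NN$.
\begin{itemize}
\item By induction, $s_n\in[0,1)$ for all $n\in\ZZ_+$. Hence $b s_{n-1}\in[0,b)$, so each $a_n$ lies in $\{0,1,\ldots,b-1\}$.
\item Unwinding the recursion gives $s=\sum_{k=1}^n a_k b^{-k}+b^{-n}s_n$.
\item Since $0\leq b^{-n}s_n<b^{-n}\to0$, the series $\sum_{k\geq1}a_k b^{-k}$ converges to $s$.
\end{itemize}
It remains to show that $a_n<b-1$ for infinitely many $n$. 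Suppose instead that $a_n=b-1$ for all $n>n_0$. Then
\[
b^{-n_0}s_{n_0}=\sum_{n>n_0}(b-1)b^{-n}=b^{-n_0},
\]
so $s_{n_0}=1$, which contradicts $s_{n_0}<1$.

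\emph{Uniqueness.} Let $(a_n)$ and $(a_n')$ be two admissible representations of $s$ that differ. Let $N$ be the first index with $a_N\neq a_N'$, and assume without loss of generality that $a_N<a_N'$. Subtracting the common initial part and multiplying by $b^N$ gives
\[
a_N'-a_N=\sum_{n>N}(a_n-a_n')b^{N-n}.
\]
The left-hand side is at least $1$. For the right-hand side, use $a_n-a_n'\leq a_n\leq b-1$, so it is bounded by $\sum_{n>N}a_n b^{N-n}\leq\sum_{n>N}(b-1)b^{N-n}=1$. The last inequality is strict, because $a_n<b-1$ for some $n>N$ (infinitely many such $n$ exist by admissibility). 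This gives $1<1$, a contradiction, so the representation is unique.

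The main point needing care is this use of admissibility in the uniqueness step: it is exactly what excludes the dual expansions ending in an infinite tail of $(b-1)$'s. The remaining steps are routine.
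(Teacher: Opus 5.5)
Your proof is correct and complete. The greedy digits $a_n=\lfloor b s_{n-1}\rfloor$ with remainders $s_n\in[0,1)$ give a convergent expansion. The identity $b^{-n_0}s_{n_0}=\sum_{n>n_0}a_nb^{-n}$ correctly rules out a terminal block of $(b-1)$'s, since such a block would force $s_{n_0}=1$. In the uniqueness step, the bound $\sum_{n>N}(a_n-a_n')b^{N-n}\leq\sum_{n>N}a_nb^{N-n}<1$ is made strict precisely by the admissibility of the expansion carrying the smaller digit at position $N$. The ``without loss of generality'' is legitimate because both expansions are admissible.

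The paper does not prove this statement at all. It is recalled from Vakil's book (Theorem B.2.4) and used as a black box, so your argument supplies the standard proof behind that citation. One thing it buys within the paper is worth noting. In the proof of Lemma \ref{Lem_comp_canonical}, the authors use only the non-strict tail bound $\bigl|\sum_{n>n^*}(a_n-c_n)b^{-n}\bigr|\leq b^{-n^*}$. This gives merely $s\leq t$ when $a_{n^*}<c_{n^*}$, and they must then invoke the uniqueness part of Theorem \ref{Thm_Vakil} to exclude $s=t$. Your strict estimate shows directly that, for canonical expansions, $a_{n^*}<c_{n^*}$ forces $t-s\geq b^{-n^*}-\sum_{n>n^*}a_nb^{-n}>0$. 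So the same computation proves both uniqueness and the lexicographic-order statement of Lemma \ref{Lem_comp_canonical}.
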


If $s=0$, then $a_n=0$, $n\in\NN$, in Theorem \ref{Thm_Vakil}.
Further, by Theorem \ref{Thm_Vakil}, the mapping $[0,1)\ni s\mapsto (a_n)_{n\in\NN}$ is well-defined and bijective
 (where $(a_n)_{n\in\NN}$ is given in Theorem \ref{Thm_Vakil}).

Next, we recall a result on the comparison of two canonical representations of real numbers in $[0,1)$.
We believe that this result is well-known in the literature, but we cannot address any reference for it,
 and hence, for completeness, we provide a proof as well.

\begin{Lem}\label{Lem_comp_canonical}
Let $b\in\NN\setminus\{1\}$, and let $s,t\in[0,1)$ be given in their canonical $b$-adic representations, namely,
 \[
  s = \sum_{n=1}^\infty \frac{a_n}{b^n} \qquad \text{and} \qquad t = \sum_{n=1}^\infty \frac{c_n}{b^n}.
 \]
Let $n^*:=\min\{ n\in\NN : a_n\ne c_n \}$ with the convention $\min\emptyset:=\infty$.
Then $s\geq t$ holds if and only if one of the following two assertions holds:
 \begin{itemize}
   \item[(i)] $n^*=\infty$, i.e., $a_n=c_n$, $n\in\NN$, 
   \item[(ii)] $n^*<\infty$, $a_1=c_1,\ldots,a_{n^*-1}=c_{n^*-1}$ and $a_{n^*}>c_{n^*}$.
 \end{itemize}
Consequently, the order of $s$ and $t$ is precisely the lexicographical order of their 
 canonical $b$-adic representations. 
\end{Lem}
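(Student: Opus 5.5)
The plan is to reduce the comparison of $s$ and $t$ to a comparison of finite truncations, using the tail bound that the canonical representation provides.

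For $N\in\NN$ set $s_N:=\sum_{n=1}^N a_n b^{-n}$ and $t_N:=\sum_{n=1}^N c_n b^{-n}$. The key estimate is
\[
 0\leq s-s_N<b^{-N}\qquad\text{for all } N\in\NN .
\]
The lower bound is immediate since all $a_n\geq 0$. For the strict upper bound, note that $s-s_N=\sum_{n>N}a_nb^{-n}\leq \sum_{n>N}(b-1)b^{-n}=b^{-N}$. Equality would force $a_n=b-1$ for all $n>N$, and this is excluded by the canonicity condition in Theorem \ref{Thm_Vakil}. The same estimate holds for $t$.

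With this in hand, the ``if'' direction goes as follows. In case (i), $s=t$ trivially. In case (ii), write $N:=n^*$. Since the first $N-1$ digits agree and $a_N\geq c_N+1$, we get $s_N-t_N\geq b^{-N}$. Then
\[
 s-t=(s_N-t_N)+(s-s_N)-(t-t_N)> b^{-N}+0-b^{-N}=0,
\]
so in fact $s>t$.

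For the ``only if'' direction, I argue by contraposition. If neither (i) nor (ii) holds, then $n^*<\infty$ and $a_{n^*}<c_{n^*}$. Applying the argument just given with the roles of $s$ and $t$ exchanged yields $t>s$, that is, $s\geq t$ fails. Hence $s\geq t$ implies (i) or (ii).

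The lexicographic statement then follows directly: by uniqueness in Theorem \ref{Thm_Vakil}, $s=t$ holds exactly when (i) holds, and $s>t$ holds exactly when (ii) holds.

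The only delicate point in the whole argument is the strictness of the tail bound. That is precisely where canonicity is used, and without it the claim fails (for example $0.0999\ldots=0.1$ in base $10$).
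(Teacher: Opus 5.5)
Your proof is correct and follows essentially the same route as the paper: split at $n^*$ and control both tails by $b^{-n^*}$. The only difference is where canonicity enters. You use it to make the tail bound strict, which gives $s>t$ directly in case (ii) and turns the converse into a symmetry argument. The paper instead works with the non-strict bound $\sum_{n>n^*}a_nb^{-n}\leq b^{-n^*}$ and invokes uniqueness of the canonical representation (Theorem \ref{Thm_Vakil}) to exclude the equality case.
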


\begin{proof}
First, suppose that one of the following two assertions (i) and (ii) holds.
If assertion (i) holds, then, by Theorem \ref{Thm_Vakil}, we get that $s=t$.
If assertion (ii) holds, then we have that
 \begin{align*}
  &s =  \sum_{n=1}^\infty a_n b^{-n}
    =  \sum_{n=1}^{n^*-1} a_n b^{-n} + a_{n^*} b^{-n^*} + \sum_{n=n^*+1}^\infty a_n b^{-n},\\
  &t =  \sum_{n=1}^\infty c_n b^{-n}
    =  \sum_{n=1}^{n^*-1} c_{n} b^{-n} + c_{n^*} b^{-n^*} + \sum_{n=n^*+1}^\infty c_n b^{-n}.
 \end{align*}
Since $a_{n^*} > c_{n^*}$, i.e., $a_{n^*} - c_{n^*}>0$, and $a_{n^*} - c_{n^*}\in\ZZ$, we have that  $a_{n^*} - c_{n^*}\in\NN$,
 yielding that $(a_{n^*} - c_{n^*})b^{-n^*} \geq b^{-n^*}$, i.e., $a_{n^*}b^{-n^*}  - c_{n^*} b^{-n^*}  \geq b^{-n^*}$.
Further, we have that 
 \begin{align*}
  0 \leq \sum_{n=n^*+1}^\infty a_n b^{-n} \leq (b-1)\sum_{n=n^*+1}^\infty b^{-n} 
     =(b-1)\frac{b^{-(n^*+1)}}{1- b^{-1}} = b^{-n^*},
 \end{align*}
 and similarly
 $0 \leq \sum_{n=n^*+1}^\infty c_n b^{-n} \leq b^{-n^*}$.
Hence, using that $a_1=c_1,\ldots,a_{n^*-1} = c_{n^*-1}$, we get that
 \begin{align*}
  s\geq  \sum_{n=1}^{n^*-1} c_n b^{-n} + c_{n^*} b^{-n^*} + b^{-n^*} + \sum_{n=n^*+1}^\infty a_n b^{-n} \geq t, 
 \end{align*}
 where the last equality follows from the fact that 
 \[
  b^{-n^*} + \sum_{n=n^*+1}^\infty a_n b^{-n} \geq  \sum_{n=n^*+1}^\infty c_n b^{-n}
  \qquad \Longleftrightarrow \qquad
  \sum_{n=n^*+1}^\infty c_n b^{-n}  -   \sum_{n=n^*+1}^\infty a_n b^{-n} \leq b^{-n^*},
 \]
 which is satisfied, since, as we checked $\sum_{n=n^*+1}^\infty a_n b^{-n}$ and $\sum_{n=n^*+1}^\infty c_n b^{-n}$ belong to $[0,b^{-n^*}]$.

Suppose now that $s\geq t$.
If $s=t$, then Theorem \ref{Thm_Vakil} yields that $a_n=c_n$, $n\in\NN$, i.e., assertion (i) holds. 
If $s>t$, then Theorem \ref{Thm_Vakil} yields that $n^*\in\NN$, i.e., $a_1=c_1,\ldots,a_{n^*-1}=c_{n^*-1}$ and $a_{n^*}\ne c_{n^*}$.
To check that assertion (ii) holds, it remains to verify that $a_{n^*}>c_{n^*}$.
Then, since $\sum_{n=1}^{n^*-1} a_n b^{-n} = \sum_{n=1}^{n^*-1} c_n b^{-n}$, we get that 
 \begin{align}\label{help_comparison}
  \begin{split}
  s-t& = a_{n^*} b^{-n^*} + \sum_{n=n^*+1}^\infty a_n b^{-n}
        - c_{n^*} b^{-n^*} - \sum_{n=n^*+1}^\infty c_n b^{-n}  \\
     & = (a_{n^*} - c_{n^*})b^{-n^*}  + \sum_{n=n^*+1}^\infty (a_n-c_n) b^{-n}.
 \end{split}     
 \end{align}
On the contrary, let us suppose that $a_{n^*}< c_{n^*}$.
Then $a_{n^*} - c_{n^*}<0$, and, since $a_{n^*} - c_{n^*}\in\ZZ$, we have that $a_{n^*} - c_{n^*}\leq -1$,
 yielding that $(a_{n^*} - c_{n^*})b^{-n^*} \leq -b^{-n^*}$.
Further, we get that 
 \begin{align*}
  \left\vert \sum_{n=n^*+1}^\infty (a_n-c_n) b^{-n} \right\vert
     \leq \sum_{n=n^*+1}^\infty \vert a_n-c_n\vert b^{-n}
     \leq (b-1)\sum_{n=n^*+1}^\infty  b^{-n}
     = b^{-n^*}.
 \end{align*} 
Hence, if $a_{n^*}< c_{n^*}$ were true, then, by \eqref{help_comparison}, we would have $s-t\leq 0$, i.e., $s\leq t$.
Taking into account that $s\geq t$, we would have that $s=t$.
Therefore, by the uniqueness of canonical $b$-adic representation (see Theorem \ref{Thm_Vakil}), 
 we would have that $a_n=c_n$, $n\in\NN$. 
This would yield that $n^*=\infty$, leading us to a contradiction.
\end{proof}

\section{Auxiliary results on piecewise continuous and piecewise continuously differentiable functions}\label{App_B}

We believe that the following results are known in the literature, but we cannot address any reference for it,
 and hence, for completeness, we provide proofs as well. 

\begin{Lem}\label{Lemma_piececont_bound_meas}
Let $\phi:[0,1]\to \RR$ be a piecewise continuous function.
Then $\phi$ is bounded and Borel measurable.
\end{Lem}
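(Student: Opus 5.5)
The proof splits naturally into boundedness and measurability, both handled by decomposing $[0,1]$ at the exceptional points. Let $0\leq t_1<\cdots<t_k\leq 1$ be the (at most finitely many) points where $\phi$ fails to be continuous. Set $t_0:=0$ and $t_{k+1}:=1$; after discarding duplicates if $t_1=0$ or $t_k=1$, this gives finitely many open intervals $(t_\ell,t_{\ell+1})$. On each of them $\phi$ is continuous. By the definition of piecewise continuity (in the sense of Folland), the one-sided limits $\phi(t_\ell+)$ and $\phi(t_{\ell+1}-)$ exist in $\RR$ at the endpoints, including at $0$ and $1$.

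\emph{Boundedness.} For each $\ell$, I will define $\widetilde\phi_\ell:[t_\ell,t_{\ell+1}]\to\RR$ by $\widetilde\phi_\ell:=\phi$ on $(t_\ell,t_{\ell+1})$, $\widetilde\phi_\ell(t_\ell):=\phi(t_\ell+)$ and $\widetilde\phi_\ell(t_{\ell+1}):=\phi(t_{\ell+1}-)$. This function is continuous on a compact interval, so it is bounded by some $M_\ell$. Hence
\[
 |\phi|\leq \max\Big(\max_{\ell} M_\ell,\ \max_{0\leq i\leq k+1}|\phi(t_i)|\Big)
\]
on $[0,1]$, which is a finite maximum.

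\emph{Borel measurability.} I will write
\[
 \phi=\sum_{\ell=0}^{k}\phi\,\bbone_{(t_\ell,t_{\ell+1})}+\sum_{i=0}^{k+1}\phi(t_i)\bbone_{\{t_i\}}.
\]
Each function $\phi\,\bbone_{(t_\ell,t_{\ell+1})}$ is Borel measurable, because it equals $\widetilde\phi_\ell\,\bbone_{(t_\ell,t_{\ell+1})}$, and $\widetilde\phi_\ell$ extended by $0$ outside $[t_\ell,t_{\ell+1}]$ is Borel measurable (it is continuous on a closed set and constant on its complement). Alternatively, one can check directly that $\{x\in(t_\ell,t_{\ell+1}):\phi(x)>a\}$ is relatively open in an open interval, hence Borel. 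The terms $\phi(t_i)\bbone_{\{t_i\}}$ are simple functions. A finite sum of Borel measurable functions is Borel measurable, which finishes the argument.

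There is no real obstacle here. The only care needed is the bookkeeping at the endpoints $0$ and $1$, where only one-sided limits are required, and the degenerate case $k=0$, in which $\phi$ is continuous on the compact set $[0,1]$ and the claim is immediate.
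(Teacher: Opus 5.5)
Your proof is correct. The measurability half matches the paper's: you decompose $\phi$ into its restrictions to the open pieces $(t_\ell,t_{\ell+1})$ plus finitely many point values.

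Your boundedness argument takes a different route. You extend $\phi$ by its one-sided limits to a continuous function on each closed piece $[t_\ell,t_{\ell+1}]$ and apply compactness once per piece. The paper instead chooses $\delta_j$-neighbourhoods of each exceptional point $t_j$ on which $\phi$ stays within $\varepsilon$ of $\phi(t_j\pm)$, and bounds $|\phi|$ there by an explicit constant $M$. It then fixes a common $\delta$ and uses continuity of $\phi$ on the remaining closed intervals $[0,t_1-\delta]$, $[t_j+\delta,t_{j+1}-\delta]$ and $[t_k+\delta,1]$.

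Your version avoids the $\varepsilon$--$\delta$ bookkeeping. It also treats the endpoint cases $t_1=0$ or $t_k=1$ uniformly; the paper assumes $0<t_1$ and $t_k<1$ and only remarks that the other cases are easily adjusted. Your extensions $\widetilde\phi_\ell$ are then reused directly for measurability. This is the same device the paper itself uses later, with the functions $g_j$ in the proof of Lemma~\ref{Lem_weak_conv}. The paper's argument works with $\phi$ directly, with no auxiliary functions, at the cost of more case handling.

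As a side remark, your sum $\sum_{\ell=0}^{k}$ correctly includes the last piece $(t_k,1)$. The paper's displayed decomposition sums over $(t_{j-1},t_j)$ for $j=1,\ldots,k$ only, so it omits that piece.
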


\begin{proof}
By the definition of piecewise continuity, 
 there exist at most finitely many points $0\leq t_1<\cdots<t_k\leq1$ 
 for some $k\in\ZZ_+$ such that $\phi$ is continuous except at the points $t_1,\ldots,t_k$ 
 and the left- and right-hand limits $\phi(t_{\ell}-)$ and $\phi(t_\ell+)$ exist in $\RR$ for all $\ell\in\{1,\ldots,k\}$. 
In case $t_{1}=0$ or $t_{k}=1$, we only require the existence of $\phi(t_{1}+)$ or $\phi(t_{k}-)$ in $\RR$, respectively.
In what follows, we suppose that $0<t_1$ and $t_k<1$.
The forthcoming proof can be easily adjusted for handling the remaining cases $t_1=0$ or $t_k=1$.
For each $j\in\{1,\ldots,k\}$, let $\delta_j\in \big( 0,\min(t_j,1-t_j)\big)$ be such that 
 \[
  \vert \phi(t) - \phi(t_j-) \vert <\vare, \qquad t\in(t_j-\delta_j,t_j),
 \]
 and 
 \[
  \vert \phi(t) - \phi(t_j+) \vert <\vare, \qquad t\in(t_j,t_j+\delta_j).
 \]
Let 
 \[
   M:=\max_{j\in\{1,\ldots,k\}} \Big\{  \vert  \phi(t_j-) + \vare \vert, \vert  \phi(t_j-) - \vare \vert,
                                        \vert  \phi(t_j+) + \vare \vert,  \vert  \phi(t_j+) - \vare \vert ,
                                        \vert \phi(t_j)\vert \Big\}.
 \] 
Then, for any
 \[
  \delta\in \Big( 0 , \min_{j\in\{1,\ldots,k\}} \min\Big(\delta_j,\frac{t_j-t_{j-1}}{2}\Big) \Big),
 \]
 where $t_0:=0$, we have that
 \begin{align}\label{help_piece_1}
  \vert \phi(x)\vert \leq M,\qquad x\in\bigcup_{j=1}^k (t_j-\delta,t_j+\delta),
 \end{align}
 where the intervals $(t_j-\delta, t_j+\delta)$, $j\in\{1,\ldots,k\}$, are pairwise disjoint. 
Further, by the hypothesis, $\phi$ is continuous on the intervals $[0,t_1-\delta]$, $[t_j+\delta, t_{j+1}-\delta]$, $j\in\{1,\ldots,k-1\}$, 
 and $[t_k+\delta,1]$, and hence bounded on these intervals as well.
Therefore, taking into account \eqref{help_piece_1}, we get that $\phi$ is bounded on $[0,1]$.

For the measurability of $\phi$, let us consider the decomposition 
 \begin{align}\label{help_piece_2}
  \begin{split}
  \phi = \sum_{j=1}^k \phi \bbone_{(t_{j-1},t_j)} + \sum_{j=0}^{k+1} \phi \bbone_{\{t_j\}} 
       = \sum_{j=1}^k \phi\vert_{(t_{j-1},t_j)} \bbone_{(t_{j-1},t_j)} + \sum_{j=0}^{k+1} \phi(t_j) \bbone_{\{t_j\}},
  \end{split}     
 \end{align} 
 where $t_{k+1}:=1$.
Here, for each $j\in\{1,\ldots,k\}$, the restriction $\phi\vert_{(t_{j-1},t_j)}$ of the function $\phi$ onto the interval
 $(t_{j-1},t_j)$ is continuous (by the hypothesis), and hence Borel measurable, and the functions
 $\bbone_{(t_{j-1},t_j)}$, $j\in\{1,\ldots,k\}$, and $\bbone_{\{t_j\}}$, $j\in\{0,\ldots,k+1\}$, are Borel measurable as well.
Since a (finite) linear combination of Borel measurable functions is Borel measurable, 
 using \eqref{help_piece_2}, we have that $\phi$ is Borel measurable.
\end{proof}

\begin{Lem}\label{Lemma_Lipcont_1}
Let $\phi:[0,1]\to \RR$ be a continuous and piecewise continuously differentiable function.
Then $\phi$ is Lipschitz continuous.
\end{Lem}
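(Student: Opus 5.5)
The plan is to reduce the statement to the mean value theorem on the finitely many subintervals where $\phi$ is continuously differentiable, and then to chain the resulting estimates across the exceptional points. Let $0<t_1<\cdots<t_k<1$ be the exceptional points from the definition of piecewise continuous differentiability, and put $t_0:=0$, $t_{k+1}:=1$.

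First I will show that $\phi'$ is bounded on $[0,1]\setminus\{t_1,\ldots,t_k\}$. On each open interval $(t_{j},t_{j+1})$, $j\in\{0,\ldots,k\}$, the function $\phi'$ is continuous. Its one-sided limits $\phi'(t_j+)$ and $\phi'(t_{j+1}-)$ exist in $\RR$; at the endpoints $0$ and $1$ these are $\phi'(0+)$ and $\phi'(1-)$, which exist by assumption. Hence $\phi'$ extends to a continuous function on each compact interval $[t_j,t_{j+1}]$, and so it is bounded there. Alternatively, one can observe that $\phi'$, with arbitrary values assigned at the $t_j$, is piecewise continuous and invoke Lemma \ref{Lemma_piececont_bound_meas}. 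Either way we obtain a constant $L:=\sup|\phi'|<\infty$, the supremum being taken over all non-exceptional points.

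Next, take $0\leq s<t\leq 1$ lying in the same closed interval $[t_j,t_{j+1}]$. Since $\phi$ is continuous on $[s,t]$ and differentiable on $(s,t)\subseteq(t_j,t_{j+1})$, the mean value theorem gives
\[
|\phi(t)-\phi(s)|=|\phi'(\xi)|\,(t-s)\leq L(t-s)
\]
for some $\xi\in(s,t)$. Note that only continuity of $\phi$ at the endpoints is needed here, and this holds because $\phi$ is continuous on all of $[0,1]$.

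For general $s<t$, let $t_{i}<\cdots<t_{i'}$ be the exceptional points lying strictly between $s$ and $t$. These points split $[s,t]$ into consecutive subintervals, each contained in a single $[t_j,t_{j+1}]$. Applying the previous estimate to each piece and using the triangle inequality, the lengths telescope:
\[
|\phi(t)-\phi(s)|\leq L(t_i-s)+L(t_{i+1}-t_i)+\cdots+L(t-t_{i'})=L(t-s).
\]
This proves that $\phi$ is Lipschitz continuous with constant $L$.

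The main point requiring care is the first step: the boundedness of $\phi'$ near the exceptional points and near $0$ and $1$. This is exactly where the existence of the finite one-sided derivative limits is used.
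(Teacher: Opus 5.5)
Your proposal is correct and follows essentially the same route as the paper: you bound $\phi'$ on each piece $(t_j,t_{j+1})$ using the finite one-sided limits, apply the mean value theorem on each closed piece, and chain the estimates across the exceptional points with a telescoping sum. Taking the supremum of $\vert\phi'\vert$ over all non-exceptional points also sidesteps the paper's small index slip ($\max_{\ell\in\{1,\ldots,k\}}$, which should run over $\ell\in\{0,\ldots,k\}$).
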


\begin{proof}
Since $\phi$ is piecewise continuously differentiable, there exist at most finitely many points 
 $0<t_1<\cdots<t_k<1$ for some $k\in\ZZ_+$ such that $\phi$ is continuously differentiable 
 except at the points $t_1,\ldots,t_k$.
Let $t_0:=0$ and $t_{k+1}:=1$.
Note that, for each $\ell\in\{0,\ldots,k\}$,  $\phi'$ is continuous on $(t_\ell,t_{\ell+1})$
 and its left- and right-hand side derivatives exist at $t_{\ell}$ and $t_{\ell+1}$, respectively.
Therefore, for each $\ell\in\{0,\ldots,k\}$, there exists a constant $C_\ell\in\RR_{++}$ such that $\vert \phi'(x)\vert \leq C_\ell$ 
 for all $x\in(t_\ell,t_{\ell+1})$.
Then
 \begin{align*}
      \vert \phi'(x)\vert \leq  \max_{\ell\in\{1,\ldots,k\}} C_\ell =:L, \qquad x\in[0,1]\setminus\{t_0,\ldots,t_{k+1}\}.
 \end{align*}
For each $\ell\in\{0,\ldots,k\}$ and all $x,y\in[t_\ell,t_{\ell+1}]$, 
 by the Lagrange mean value theorem applied to the restriction of $\phi$ onto $[t_\ell,t_{\ell+1}]$ 
  (which can be indeed applied, since $\phi$ is continuous on $[t_\ell,t_{\ell+1}]$ 
 and differentiable on $(t_\ell,t_{\ell+1})$), we have that there exists a point $c$ in the open interval with endpoints $x$ and $y$ such that 
 \begin{align}\label{help_Lipcont_1}
  \vert \phi(x) - \phi(y)\vert = \vert \phi'(c) \vert \vert x-y\vert \leq L \vert x-y\vert.
 \end{align} 
If $\ell<\ell^*$ with $\ell,\ell^*\in\{0,\ldots,k\}$ and $x\in[t_\ell,t_{\ell+1}]$, $y\in[t_{\ell^*},t_{\ell^*+1}]$,
 then, by the triangle inequality, \eqref{help_Lipcont_1} and the fact that $0\leq t_1<\cdots<t_k\leq1$, 
 we have that 
 \begin{align*}
 \vert \phi(y) - \phi(x)\vert
   & = \left\vert (\phi(y) - \phi(t_{\ell^*}) )  + (\phi(t_{\ell^*}) - \phi(t_{\ell^*-1}))  + \cdots + 
                  (\phi(t_{\ell+1}) - \phi(x))  \right\vert\\
   &\leq \vert \phi(y) - \phi(t_{\ell^*}) \vert +  \vert \phi(t_{\ell^*}) - \phi(t_{\ell^*-1}) \vert 
          + \cdots + \vert \phi(t_{\ell+1}) - \phi(x) \vert \\
   &\leq L \Big( \vert y - t_{\ell^*} \vert + \vert t_{\ell^*} - t_{\ell^*-1} \vert  
                  + \cdots + \vert t_{\ell+1} - x \vert \Big)\\
   &= L \Big( (y - t_{\ell^*} ) + ( t_{\ell^*} - t_{\ell^*-1})  
                  + \cdots + (t_{\ell+1} - x) \Big)\\
   & = L (y-x) = L\vert y-x\vert.                                                  
 \end{align*}
This implies that $\phi$ is Lipschitz continuous with a Lipschitz constant $L$.
\end{proof}

\begin{Lem}\label{Lemma_Lipcont_2}
Let $\phi:\RR\to \RR$ be a periodic function with period 1 such that it vanishes on $\ZZ$ and 
 its restriction onto $[0,1]$ is H\"older continuous with exponent $\beta\in(0,1]$.
Then $\phi$ is H\"older continuous (on $\RR$) with exponent $\beta\in(0,1]$ as well.
\end{Lem}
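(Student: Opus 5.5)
Let $C>0$ denote a H\"older constant of the restriction $\phi\vert_{[0,1]}$, so that $\vert\phi(x)-\phi(y)\vert\leq C\vert x-y\vert^\beta$ for all $x,y\in[0,1]$. The plan is to show that $\phi$ is H\"older continuous on $\RR$ with exponent $\beta$ and constant $2C$ (any fixed multiple of $C$ would do). Let $x,y\in\RR$ with $x<y$. By periodicity we may translate both points by the same integer, so we can assume $x\in[0,1)$. We then distinguish two cases according to $y-x$.

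\emph{Case 1: $y-x\geq 1$.} Here periodicity, the vanishing of $\phi$ on $\ZZ$, and the H\"older bound on $[0,1]$ give $\vert\phi(z)\vert=\vert\phi(z-\lfloor z\rfloor)-\phi(0)\vert\leq C$ for all $z\in\RR$. Hence
 \[
  \vert\phi(x)-\phi(y)\vert\leq 2C\leq 2C\vert x-y\vert^\beta ,
 \]
since $\vert x-y\vert\geq1$ and $\beta>0$.

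\emph{Case 2: $y-x<1$.} Then $y\in[0,2)$. If $y\leq 1$, both points lie in $[0,1]$ and the hypothesis applies directly. If $y\in(1,2)$, then $x<1<y$. Using $\phi(1)=0$ and $\phi(y)=\phi(y-1)-\phi(0)$ with $y-1\in(0,1)$, we obtain
 \[
  \vert\phi(x)-\phi(y)\vert\leq\vert\phi(x)-\phi(1)\vert+\vert\phi(y-1)-\phi(0)\vert\leq C(1-x)^\beta+C(y-1)^\beta .
 \]
Since $0\leq 1-x\leq y-x$ and $0\leq y-1\leq y-x$, and $t\mapsto t^\beta$ is increasing on $\RR_+$, each term is at most $C\vert y-x\vert^\beta$. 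This gives the bound $2C\vert x-y\vert^\beta$.

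Combining the two cases proves the lemma with constant $2C$. There is no real obstacle here. The only point requiring care is the case where the two points straddle an integer. This is handled by inserting the integer point $1$ and using that $\phi$ vanishes there together with periodicity; in the first case we instead use only the boundedness of $\phi$, which again follows from periodicity and $\phi(0)=0$.
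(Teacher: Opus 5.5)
Your proof is correct and follows essentially the same route as the paper's. Both split into the case $\vert y-x\vert\geq 1$, handled by the boundedness of $\phi$, and the case where two points at distance less than $1$ straddle an integer, handled by inserting that integer and using periodicity together with $\phi$ vanishing on $\ZZ$. The differences are cosmetic: you bound each of the two terms by $C\vert y-x\vert^\beta$ using monotonicity of $t\mapsto t^\beta$ rather than the concavity inequality $a^\beta+b^\beta\leq 2^{1-\beta}(a+b)^\beta$, and you derive the sup bound from the H\"older condition and $\phi(0)=0$, which gives the single constant $2C$. You also keep the exponent $\beta$ in the final estimate, whereas the paper's last display drops it by a typo.
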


\begin{proof}
Note that $\phi(t)=\phi(\{t\})$ for all $t\in \RR$, where $\{t\}$ denotes the fractional part of $t$.
Since the restriction of $\phi$ onto $[0,1]$ is H\"older continuous and hence continuous,
 we have that there exists a constant $K\in\RR_{++}$ such that $\vert \phi(x)\vert\leq K$, $x\in[0,1]$,
 and, using that $\phi$ is periodic with period $1$, we have that $\vert \phi(x)\vert\leq K$, $x\in\RR$.

Let $x, y \in \RR$ be such that $x<y$.
If $|y - x| \geq 1$, then 
\[
 |\phi(y) - \phi(x)| \leq |\phi(y)| + |\phi(x)| 
    \leq 2K\leq 2K |y - x|.
 \]

If $|y - x| < 1$ and $\lfloor x \rfloor = \lfloor y \rfloor$, 
 then, using that $\phi$ is H\"older continuous on $[0,1]$ with exponent $\beta\in(0,1]$, we get that 
 \[
  |\phi(y) - \phi(x)| = |\phi(\{y\}) - \phi(\{x\})| \leq L |\{y\} - \{x\}|^\beta
 \] 
 with some $L\in\RR_{++}$. 
Since $\lfloor x \rfloor = \lfloor y \rfloor$, then $\{y\} - \{x\} = y - x$, yielding that 
 $|\phi(y) - \phi(x)| \leq L |y - x|^\beta$.

If $|y - x| < 1$ and $\lfloor x \rfloor \neq \lfloor y \rfloor$, then, using also that $x < y$, 
 there exists a unique integer $\ell\in\ZZ$ such that $\ell-1<x \leq \ell \leq y<\ell+1$.
By the triangle inequality, we get that
 \[
  |\phi(y) - \phi(x)| = |(\phi(y) - \phi(\ell)) + (\phi(\ell) - \phi(x))| \leq |\phi(y) - \phi(\ell)| + |\phi(\ell) - \phi(x)|.
 \] 
Since $\phi$ is H\"older continuous on $[0,1]$ with exponent $\beta\in(0,1]$ and $\phi(\ell)=\phi(0) = \phi(1)$, we get that
 \[
  |\phi(y) - \phi(\ell)| = |\phi(\{y\}) - \phi(0)| \leq L |\{y\} - 0|^\beta = L (y - \ell)^\beta,
 \] 
 and 
 \[
   |\phi(\ell) - \phi(x)| = |\phi(1) - \phi(\{x\})| \leq L |1 - \{x\}|^\beta= L (\ell - x)^\beta.
 \]  
Hence
 \[
  |\phi(y) - \phi(x)| \leq L\left( (y - \ell)^\beta + (\ell - x)^\beta\right)
                       \leq L 2^{1-\beta}(y-x)^\beta 
                       \leq 2L \vert y-x\vert^\beta,
 \] 
 where we used the inequality $a^\beta + b^\beta \leq 2^{1-\beta}(a+b)^\beta$, $a,b\in\RR_+$
 (following from Jensen's inequality applied to the function $\RR_+\ni x\mapsto x^\beta$).
 
All in all, we have that
 \[
  |\phi(y) - \phi(x)| \leq 2(K+L)\vert y-x\vert, \qquad x,y\in\RR,
 \] 
 i.e., $\phi$ is H\"older continuous on $\RR$ with exponent $\beta$ and with a H\"older constant $2(K+L)$.
\end{proof}

\section{Auxiliary results on convergence of functionals of random variables}\label{App_C}

In the proofs of Sections \ref{section_intro} and \ref{Sec_Results},
 we need the following auxiliary results that can partially be found in Han et al.\ \cite{HanSchZha} 
 and are extended to our needs.
 
First, we prove Lemma \ref{Lem_p_var_expression}, 
which is due to Han et al.\ \cite[formula (2.2)]{HanSchZha},
but, as we already mentioned in the Introduction, we presented it under a weaker hypothesis 
(supposing the H\"older continuouity of $\phi$ instead of Lipschitz continuity)
 and clarified the well-definedness of $V^{\Theta,1}_n(f)$
 for sufficiently large \ $n\in\NN$ as well.
 
\smallskip 
 
\noindent {\bf Proof of Lemma \ref{Lem_p_var_expression}.}
By the assumptions, $\phi$ is bounded on $\RR$, i.e., there exists a $K\in\RR_{++}$ such that $\vert \phi(t)\vert\leq K$, $t\in\RR$.
Hence we have that
 \[
  \vert f(t)\vert \leq K \sum_{m=0}^\infty b^{-m}
                    = \frac{K}{1 - b^{-1}} <\infty,\qquad t\in[0,1].
 \]
Consequently, due to the Weierstrass M-test, the series in the definition of $f$ converges absolutely and uniformly on $[0,1]$,
 so the function $f$ is well-defined.
Finally, the uniform limit theorem implies that $f$ is continuous as well.

Further, since $\phi$ is H\"older continuous with exponent $\beta\in(0,1]$, then
 using \eqref{Y_estimate}, we have that $b^{-n}\left\vert \sum_{m=1}^n Y_m \right\vert\to0$ as $n\to\infty$, since
  \begin{align*}
   0\leq b^{-n}\left\vert \sum_{m=1}^n Y_m \right\vert
   \leq C b^{-n} \sum_{m=1}^n b^{m(1-\beta)} 
   = \begin{cases}
       C b^{1-\beta} \frac{b^{-n\beta} - b^{-n}}{b^{1-\beta} - 1}  & \text{if $\beta\in(0,1)$,}\\
       C nb^{-n} & \text{if $\beta=1$.}
     \end{cases}
 \end{align*}
 where the constant $C\in\RR_{++}$ is given by \eqref{phi_Holder}. 
Hence $\Theta\left( b^{-n}\left\vert \sum_{m=1}^n Y_m \right\vert \right)$, and consequently,
 taking into also account that $\Theta$ is nonnegative,
 the right-hand side of \eqref{help15_Theta} is well-defined for sufficiently large $n\in\NN$.

Formula \eqref{help15_Theta} in case of a Lipschitz continuous $\phi$ has been derived by 
 Han et al.\ \cite[formula (2.2)]{HanSchZha}, but their argument also works in the case of H\"older continuous
 function $\phi$.
\proofend
 
The next result is an extension of formula (2.3) in the proof of Lemma 2.1 in Han et al.\ \cite{HanSchZha}.

\begin{Lem}\label{Lem_HSZ_aux}
Let $(\xi_n)_{n\in\ZZ_+}$ be a sequence of random variables such that the sequence $\frac{1}{n}\EE(\xi_n^2)$, $n\in\NN$, is bounded,
 and $\frac{1}{\sqrt{n}} \xi_n$ converges in distribution as $n\to\infty$
to a normally distributed random variable with mean $0$ and variance $\sigma^2>0$. 
Then for any sequence of nondegenerate intervals $I_{n}\subseteq\RR_{+}$, $n\in\NN$, with endpoints $0\leq a_{n}< b_{n}\leq\infty$ such that $a_{n}\to a$ and $b_{n}\to b$
 for some $a,b\in[0,\infty]$ with $a<b$, we have that
    \begin{align}\label{help_Han_2b}
    \lim_{n\to\infty}
      \EE\left( \left\vert \frac{1}{\sqrt{n}}\xi_n \right\vert 
            \bbone_{ \big\{ \big\vert \frac{1}{\sqrt{n}}\xi_n \big\vert\in I_{n}\big\} }  \right)
     = \frac{1}{\sqrt{2\pi \sigma^2}} \int_{\{ z\in\RR \,: \, \vert z\vert\in I \}}
           \vert z\vert \ee^{-\frac{z^2}{2\sigma^2}}\,\dd z
 \end{align}
 with the notation $I:=(a,b)$.
\end{Lem}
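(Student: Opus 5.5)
Write $X_n:=\frac{1}{\sqrt{n}}\xi_n$ and let $X\distre\cN(0,\sigma^2)$. The plan is a standard uniform-integrability argument combined with weak convergence and a continuity-point check for the moving intervals $I_n$. Since $\sup_n\EE(X_n^2)=\sup_n\frac1n\EE(\xi_n^2)<\infty$, the family $(|X_n|)_{n\in\NN}$ is uniformly integrable: for $K>0$,
\[
\sup_n\EE\big(|X_n|\bbone_{\{|X_n|>K\}}\big)\leq \frac{1}{K}\sup_n\EE(X_n^2)\to0 \qquad\text{as } K\to\infty .
\]
The same bound holds for $X$. So it suffices to prove the statement with $|X_n|$ replaced by the truncation $|X_n|\wedge K$ and with $I_n$ replaced by $I_n\cap[0,K]$, up to an error that is uniformly small in $n$.

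\textbf{Fixed-interval case.} Take the interval $I=(a,b)$, possibly with $b=\infty$. The function $g(x):=|x|\bbone_{\{|x|\in \bar I\}}$ (or with any choice of endpoints) is discontinuous only at points with $|x|\in\{a,b\}$. This set is Lebesgue-null, so it has $\PP(X\in\cdot)$-measure zero because $\sigma^2>0$. By the continuous mapping theorem, $g(X_n)\distr g(X)$. Uniform integrability of $g(X_n)\leq|X_n|$ then gives $\EE g(X_n)\to\EE g(X)$, for instance by Billingsley's Theorem 5.4, as already used in the proof of Lemma \ref{Lem_quadratic_Z_exp}. The limit $\EE g(X)$ is exactly the right-hand side of \eqref{help_Han_2b}.

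\textbf{Moving intervals.} For $\eta>0$, the symmetric difference between $\{|X_n|\in I_n\}$ and $\{|X_n|\in I\}$ is eventually contained in
\[
\{|X_n|\in[a-\eta,a+\eta]\}\cup\{|X_n|\in[b-\eta,b+\eta]\}.
\]
When $b=\infty$, the second set is replaced by $\{|X_n|\geq M\}$ for large $M$, because $b_n\to\infty$; likewise if $a=\infty$, but that case is excluded by $a<b$.

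We therefore need
\[
\limsup_{n\to\infty}\EE\big(|X_n|\bbone_{\{|X_n|\in[c-\eta,c+\eta]\}}\big)\leq \EE\big(|X|\bbone_{\{|X|\in[c-\eta,c+\eta]\}}\big)
\]
for finite $c$. This follows from the fixed-interval case applied to the closed interval, since its endpoints are again null for the limit law. The right-hand side tends to $0$ as $\eta\downarrow0$. The tail term $\{|X_n|\geq M\}$ is handled by uniform integrability.

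\textbf{Main obstacle.} The only delicate point is this moving-endpoint bookkeeping, in particular the infinite-endpoint cases. The argument above handles it by sandwiching between fixed intervals and using that the normal limit has no atoms.
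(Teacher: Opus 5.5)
Your proof is correct, but it handles the moving intervals by a different route than the paper. Both arguments begin the same way, with uniform integrability coming from the bounded second moments.

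The paper then treats the moving intervals in one step using the extended mapping theorem for a sequence of functions (Billingsley's Theorem 5.5). It sets $h_n(x)=|x|\bbone_{I_n}(|x|)$ and $h(x)=|x|\bbone_{I}(|x|)$ and observes that $h_n(x_n)\to h(x)$ whenever $x_n\to x$ with $|x|\notin\{a,b\}$. This gives $h_n(X_n)\distr h(X)$ directly, where in your notation $X_n=\frac{1}{\sqrt{n}}\xi_n$. Uniform integrability then yields convergence of the means (Billingsley's Theorem 5.4).

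You instead use only the ordinary continuous mapping theorem for a fixed function and control $I_n\triangle I$ by hand. Eventually this set lies in $\eta$-neighbourhoods of $a$ and $b$, or in $[M,\infty)$ when $b=\infty$. You bound the neighbourhood contributions by the fixed closed-interval case together with the absence of atoms of the normal limit, and the tail by uniform integrability.

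The paper's route is shorter once the extended mapping theorem is available, although its check of the exceptional set is only sketched; strictly that set is $\{x : |x|\in\{a,b\}\}$, not $\{a,b\}$. Your route is more elementary and makes the infinite-endpoint case explicit, at the price of the extra $\eta$ and $M$ bookkeeping. Your opening remark about truncating at level $K$ is never used and can be dropped.
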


\begin{proof}
Due to the hypothesis that $\frac{1}{n}\EE(\xi_n^2)$, $n\in\NN$, is bounded, 
 for any nondegenerate interval $I_{n}\subseteq\RR_+$, $n\in\NN$, the family of random variables 
 $\{\vert \frac{1}{\sqrt{n}}\xi_n \vert \bbone_{\{ \vert \frac{1}{\sqrt{n}}\xi_n\vert\in I_{n}\} }  : n\in\NN\}$
is uniformly integrable, since 
 \begin{align*}
  \sup_{n\in\NN}
   \EE\left( \left\vert  \left\vert \frac{1}{\sqrt{n}}\xi_n \right\vert 
            \bbone_{ \big\{ \big\vert \frac{1}{\sqrt{n}}\xi_n \big\vert\in I_{n}\big\} } \right\vert^2 \right)
   = \sup_{n\in\NN} \frac{1}{n}
             \EE \left( \xi_n^2 \bbone_{  \big\{ \big\vert \frac{1}{\sqrt{n}}\xi_n \big\vert\in I_{n}\big\} } \right)      
   \leq \sup_{n\in\NN} \frac{1}{n} \EE(\xi_n^2)
   <\infty.
 \end{align*} 
 By the mapping theorem (see, e.g., Billingsley \cite[Theorem 5.5]{Bil1}), we check that
  \begin{align}\label{help_Han_3b}
 \left\vert \frac{1}{\sqrt{n}}\xi_n \right\vert 
            \bbone_{ \big\{ \big\vert \frac{1}{\sqrt{n}}\xi_n \big\vert\in I_n\big\} } 
   \distr      
    \vert \zeta \vert \bbone_{ \{\vert \zeta \vert\in I\} }
    \qquad \text{as $n\to\infty$,}      
  \end{align}
 where $\zeta$ is a normally distributed random variable with mean $0$ and variance $\sigma^2$.
For each $n\in\NN$, let $h_n:\RR\to\RR$, $h_n(x):=\vert x\vert \bbone_{I_n}(\vert x\vert)$, $x\in\RR$,
 and $h:\RR\to\RR$, $h(x):=\vert x\vert \bbone_I(\vert x\vert)$, $x\in\RR$.
Further, let $S:=\big\{ x\in\RR : h_n(x_n)\nrightarrow h(x)\;\;  \text{for some real sequence $(x_n)_{n\in\NN}$ tending to $x$}\big\}$.
Note that $S\subset \{a,b\}$, since, for example, if $0\leq a<x<b$ and $x_n\to x$ as $n\to\infty$,
 then $0\leq a_n<\frac{a+x}{2}<x_n<\frac{x+b}{2}<b_n$ for large enough $n\in\NN$, 
 yielding that $h_n(x_n) = x_n \to x = h(x)$ as $n\to\infty$.
The other cases can be handled similarly.  
Since $\{a,b\}$ has Lebesgue measure $0$ and $\zeta$ is absolutely continuous, we have $\PP(\zeta\in \{a,b\})=0$,
 yielding that $\PP(\zeta\in S)=0$.
Therefore, we get \eqref{help_Han_3b} by the mapping theorem.

Consequently, the uniform integrability together with \eqref{help_Han_3b} yields  that 
 \begin{align*}
  \lim_{n\to\infty}
      \EE\left( \left\vert \frac{1}{\sqrt{n}}\xi_n \right\vert 
            \bbone_{ \big\{ \big\vert \frac{1}{\sqrt{n}}\xi_n \big\vert\in I_n\big\} }  \right)
   & = \EE\big( \vert \zeta \vert \bbone_{ \{\vert \zeta \vert\in I\} } \big)            
   =\frac{1}{\sqrt{2\pi \sigma^2}} \int_{\{ z\in\RR \,: \, \vert z\vert\in I \}}
       \vert z\vert \ee^{-\frac{z^2}{2\sigma^2}}\,\dd z,
 \end{align*}
 see, e.g., Theorem 5.4 in Billingsley \cite{Bil1}, which proves \eqref{help_Han_2b}.
\end{proof} 

The next auxiliary result is Lemma 2.1 in Han et al.\ \cite{HanSchZha}. 
We give a detailed proof because its steps will be needed in the proof of Theorem \ref{Thm_main}.

\begin{Lem}[Han et al.\  {\cite[Lemma 2.1]{HanSchZha}}]\label{Lem_HSZ}
Let $(\xi_n)_{n\in\ZZ_+}$ be a sequence of random variables such that
 \begin{itemize}
  \item[\rm (i)] $\xi_0=0$,
  \item[\rm (ii)] its increments are uniformly bounded, i.e., there exists a constant $K>0$ such that
        $\PP(\vert \xi_{n+1} - \xi_n\vert\leq K \text{ for all } n\in\ZZ_+)=1$,
  \item[\rm (iii)] $\frac{1}{\sqrt{n}} \xi_n$ converges in distribution as $n\to\infty$
               to a normally distributed random variable with mean $0$ and variance $\sigma^2>0$,
  \item[\rm (iv)] the sequence $\frac{1}{n}\EE(\xi_n^2)$, $n\in\NN$, is bounded.
 \end{itemize}
Then, for each $b\in\NN\setminus\{1\}$, we get that
 \[
    \lim_{n\to\infty} b^n\EE\Big( \Theta(b^{-n} \vert \xi_n\vert)\Big) = \sqrt{\frac{2\sigma^2}{\pi\ln(b)}},
 \]
 where the function $\Theta$ is defined in \eqref{def_Theta}.
\end{Lem}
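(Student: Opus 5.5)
Writing $\Theta$ explicitly, for $n$ large enough that $b^{-n}|\xi_n|<1$ (by (i)--(ii) we have $|\xi_n|\le Kn$, and $Knb^{-n}\to0$), we get
\[
 b^n\EE\big(\Theta(b^{-n}|\xi_n|)\big)=\EE\left(\frac{|\xi_n|}{\sqrt{n\ln(b)-\ln|\xi_n|}}\,\bbone_{\{|\xi_n|>0\}}\right).
\]
The plan is to show that the denominator behaves like $\sqrt{n\ln(b)}$ on the relevant events. Then Lemma \ref{Lem_HSZ_aux} identifies the limit as $\frac{1}{\sqrt{\ln b}}\EE|\zeta|=\frac{1}{\sqrt{\ln b}}\sqrt{2\sigma^2/\pi}$ with $\zeta\sim\cN(0,\sigma^2)$. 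We prove matching upper and lower bounds separately.

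\emph{Step 1 (upper bound).} Since $|\xi_n|\le Kn$, we have $n\ln(b)-\ln|\xi_n|\ge n\ln(b)-\ln(Kn)$ on $\{|\xi_n|>0\}$. For any $\gamma\in(0,\ln b)$ this exceeds $n\gamma$ for all $n\ge n_0(\gamma)$. Hence the expression is at most $\gamma^{-1/2}\,\EE\big(|n^{-1/2}\xi_n|\big)$. By Lemma \ref{Lem_HSZ_aux} with $I_n=I=(0,\infty)$, which is allowed by (iii) and (iv), this tends to $\gamma^{-1/2}\sqrt{2\sigma^2/\pi}$. (The atom at $0$ contributes nothing, since it has weight $|z|=0$.) Letting $\gamma\uparrow\ln b$ gives the $\limsup$ bound.

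\emph{Step 2 (lower bound).} Fix $\vare>0$. On $\{|n^{-1/2}\xi_n|\ge\vare\}$ we have $|\xi_n|\ge\vare\sqrt n\ge1$ once $n\ge\vare^{-2}$. Thus $\ln|\xi_n|\ge0$, and the denominator is at most $\sqrt{n\ln b}$. Dropping the complementary event, which has a nonnegative integrand, gives
\[
 b^n\EE\big(\Theta(b^{-n}|\xi_n|)\big)\ge\frac{1}{\sqrt{\ln b}}\,\EE\Big(|n^{-1/2}\xi_n|\bbone_{\{|n^{-1/2}\xi_n|\ge\vare\}}\Big).
\]
By Lemma \ref{Lem_HSZ_aux} with $I_n=[\vare,\infty)$, so that $I=(\vare,\infty)$, the right-hand side converges to
\[
 \frac{1}{\sqrt{\ln b}}\,\frac{1}{\sqrt{2\pi\sigma^2}}\int_{|z|>\vare}|z|\,\ee^{-z^2/(2\sigma^2)}\,\dd z .
\]
Letting $\vare\downarrow0$, monotone convergence yields the matching $\liminf$ bound $\sqrt{2\sigma^2/(\pi\ln b)}$.

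The only delicate point is the upper bound. There $\ln|\xi_n|$ could a priori be very negative when $|\xi_n|$ is small, making the denominator large rather than small. That only helps the upper bound, though, and the crude bound $|\xi_n|\le Kn$ already controls the dangerous direction: it is exactly where assumption (ii) is used. The uniform integrability needed to pass expectations to the limit is supplied by (iv) inside Lemma \ref{Lem_HSZ_aux}.
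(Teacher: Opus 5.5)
Your proof is correct and follows the paper's argument essentially step for step. The upper bound is the same: $\gamma$ with $|\xi_n|\leq Kn$, then Lemma \ref{Lem_HSZ_aux} with $I_n=I=(0,\infty)$. The lower bound also uses $\ln|\xi_n|\geq 0$ on $\{|\xi_n|\geq 1\}$. The only cosmetic difference is that the paper takes the shrinking threshold $\vare_n=1/\sqrt{n}$, so Lemma \ref{Lem_HSZ_aux} with $I_n=[\vare_n,\infty)$ and $I=(0,\infty)$ gives the limit directly. You instead fix $\vare$ and then let $\vare\downarrow 0$, as the paper itself does in Step 4 of the proof of Theorem \ref{Thm_main}.
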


\noindent{\bf Proof.}
{\sl Step 1.}
We show that
 \begin{align}\label{help_Han_5}
 \limsup_{n\to\infty} b^n\EE\Big( \Theta(b^{-n} \vert \xi_n\vert)\Big)  
  \leq \sqrt{\frac{2\sigma^2}{ \pi\ln(b)}}.       
 \end{align}
Using the definition of $K$ and that $\xi_0=0$, we have that $\PP$-almost surely 
 \begin{align*}
  \vert \xi_n\vert
    &= \left\vert \sum_{k=1}^{n}(\xi_k-\xi_{k-1})\right\vert 
    \leq  \sum_{k=1}^{n}\vert \xi_k-\xi_{k-1}\vert\leq Kn, \qquad n\in\NN.
 \end{align*}
Note that, for all $\gamma\in(0,\ln(b))$, there exists an $n_0\in\NN$ such that 
 \[
    n\gamma < n\ln(b) - \ln(Kn),\qquad n\geq n_0.
 \]
Indeed, the inequality $n\gamma < n\ln(b) - \ln(Kn)$ holds if and only if $\gamma < \ln(b) - \frac{\ln(Kn)}{n}$, where 
 $\frac{\ln(Kn)}{n}\to 0$ as $n\to\infty$. 
Hence, for all $\gamma\in(0,\ln(b))$, there exists an $n_0\in\NN$ such that $\PP$-almost surely we have
 \[
  \sqrt{n\ln(b) - \ln(\vert \xi_n\vert)} \bbone_{\{ \vert\xi_n\vert  > 0 \} } \geq \sqrt{n\ln(b) - \ln(Kn)} \bbone_{\{ \vert\xi_n\vert  > 0 \} } 
        \geq  \sqrt{n\gamma} \bbone_{\{ \vert\xi_n\vert  > 0 \} }, \qquad n\geq n_0.
 \]
Therefore, using \eqref{def_Theta} and by choosing $I:=I_{n}:=(0,\infty)$, $n\in\NN$, in \eqref{help_Han_2b}, we have that 
 \begin{align*}
  & \limsup_{n\to\infty} b^n\EE\Big( \Theta(b^{-n} \vert \xi_n\vert)\Big)  
    = \limsup_{n\to\infty}  b^n \EE\left(  \frac{b^{-n}\vert \xi_n\vert }{\sqrt{-\ln(b^{-n}\vert \xi_n\vert)}}
                    \bbone_{\{ b^{-n} \vert\xi_n\vert \ne 0\} }  \right)  \\
 &\qquad =\limsup_{n\to\infty} 
             \EE\left(  \frac{\vert \xi_n\vert }{\sqrt{ n\ln(b) - \ln(\vert \xi_n\vert)} }
             \bbone_{\{ b^{-n} \vert\xi_n\vert \ne 0\} }  \right) 
  \leq \limsup_{n\to\infty} 
               \EE\left(  \frac{\vert \xi_n\vert }{\sqrt{n\gamma} }
                     \bbone_{\{ \vert\xi_n\vert  > 0 \} }  \right)\\
& \qquad = \frac{1}{\sqrt{\gamma}}
        \frac{1}{\sqrt{2\pi \sigma^2}} \int_{\RR\setminus\{0\}}
           \vert z\vert \ee^{-\frac{z^2}{2\sigma^2}}\,\dd z 
     = \sqrt{\frac{2\sigma^2}{\pi \gamma}}
 \end{align*}
for all $\gamma\in(0,\ln(b))$. By taking the limit $\gamma\uparrow \ln(b)$, we get \eqref{help_Han_5}.

{\sl Step 2.}
We show that
 \begin{align}\label{help_Han_6}
  \liminf_{n\to\infty} b^n\EE\Big( \Theta(b^{-n} \vert \xi_n\vert)\Big) 
     \geq \sqrt{\frac{2\sigma^2}{\pi \ln(b)}},  
\end{align}
which together with \eqref{help_Han_5} yields the statement.
For each $n\in\NN$, let $\vare_{n}:=\frac1{\sqrt{n}}$. 
Then we have
 \begin{align*}
 \bbone_{\{ \vert\frac{1}{\sqrt{n}}\xi_n\vert  \geq \vare_{n} \} }
     \sqrt{n\ln(b) - \ln(\vert \xi_n\vert)} 
  \leq  \bbone_{\{ \vert\frac{1}{\sqrt{n}}\xi_n\vert  \geq \vare_{n} \} }
       \sqrt{n\ln(b)},
 \end{align*}
 since if $\vert \frac{1}{\sqrt{n}}\xi_n\vert\geq \vare_n$, i.e., $\vert \xi_n\vert\geq 1$, then 
 $\ln(\vert \xi_n\vert) \geq \ln(1)=0$.
Therefore, using \eqref{def_Theta} and by choosing $I_{n}:=[\vare_{n},\infty)$, $n\in\NN$, and $I:=(0,\infty)$ in \eqref{help_Han_2b}, we have that 
 \begin{align*}
  & \liminf_{n\to\infty} b^n\EE\Big( \Theta(b^{-n} \vert \xi_n\vert)\Big)  
      = \limsup_{n\to\infty}  b^n \EE\left(  \frac{b^{-n}\vert \xi_n\vert }{\sqrt{-\ln(b^{-n}\vert \xi_n\vert)}}
                    \bbone_{\{ b^{-n} \vert\xi_n\vert \ne 0\} }  \right) \\
  & = \liminf_{n\to\infty} 
             \EE\left(  \frac{\vert \xi_n\vert }{\sqrt{ n\ln(b) - \ln(\vert \xi_n\vert)} }
                     \bbone_{\{ \vert\xi_n\vert  > 0 \} }  \right) 
   \geq \liminf_{n\to\infty} 
               \EE\left(  \frac{\vert \xi_n\vert }{\sqrt{n\ln(b) - \ln(\vert \xi_n\vert)} }
                     \bbone_{\{ \vert\xi_n\vert \geq  1 \} }  \right) \\
  &\geq \liminf_{n\to\infty} 
               \EE\left(  \frac{\vert \xi_n\vert }{\sqrt{ n\ln(b)} }
                     \bbone_{\{ \vert\frac{1}{\sqrt{n}}\xi_n\vert  \geq \vare_{n} \} }  \right) 
   = \frac{1}{\sqrt{\ln(b)}}
        \frac{1}{\sqrt{2\pi \sigma^2}} \int_{\RR}
           \vert z\vert \ee^{-\frac{z^2}{2\sigma^2}}\,\dd z= \sqrt{\frac{2\sigma^2}{\pi\ln(b)}},
 \end{align*}
 yielding \eqref{help_Han_6}.
\proofend

The last auxiliary result is a generalization of Lemma 2.3 in Han et al.\ \cite{HanSchZha} to the case of a piecewise continuous function.
 We detail and extend the proof in Han et al.\  \cite{HanSchZha} to our more general case.

\begin{Lem}\label{Lem_weak_conv}
For the sequence of random variables $(R_m)_{m\in\NN}$ defined in \eqref{help33}, we have that
 \[
   \frac{1}{n}\sum_{\ell=1}^n g(b^{-\ell} R_\ell) \to \int_0^1 g(x)\,\dd x \quad \text{ as $n\to\infty$ \ $\PP$-almost surely}
 \]
 for every piecewise continuous function $g:[0,1]\to\RR$.
\end{Lem}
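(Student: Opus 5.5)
The plan is to exploit that $X_\ell:=b^{-\ell}R_\ell=\sum_{i=1}^{\ell}U_i b^{i-1-\ell}$ is a Markov chain on $[0,1)$ with $X_\ell=(X_{\ell-1}+U_\ell)/b$, i.e.\ the most recent digit $U_\ell$ becomes the \emph{leading} $b$-adic digit. This is the inverse of the $b$-adic shift. The chain is uniquely ergodic with respect to Lebesgue measure: the distribution of $X_\ell$ given $X_{\ell-k}$ is uniform on a grid of mesh $b^{-k}$ shifted by $b^{-k}X_{\ell-k}$. I would first treat continuous $g$, then pass to piecewise continuous $g$ by approximation, using the boundedness of $g$ from Lemma \ref{Lemma_piececont_bound_meas}.

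For continuous $g$ I would argue by a martingale strong law. Fix $k\in\NN$ and note that $X_\ell$ depends only on the last $k$ digits up to an error of at most $b^{-k}$:
\[
\Big|X_\ell-\sum_{i=\ell-k+1}^{\ell}U_i b^{i-1-\ell}\Big|\leq b^{-k}.
\]
By uniform continuity, $g(X_\ell)$ differs from $g(W_\ell^{(k)})$ by at most $\omega_g(b^{-k})$, where $W_\ell^{(k)}:=\sum_{i=\ell-k+1}^{\ell}U_ib^{i-1-\ell}$. For fixed $k$, the sequence $(W^{(k)}_\ell)_{\ell\geq k}$ is $k$-dependent and stationary, each term being uniform on $\{jb^{-k}\}$. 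Splitting the indices into $k$ residue classes mod $k$ gives i.i.d.\ sequences, so the strong law of large numbers yields
\[
\frac1n\sum_{\ell\le n}g(W^{(k)}_\ell)\to b^{-k}\sum_{j=0}^{b^k-1}g(jb^{-k})
\]
almost surely. Letting $k\to\infty$ along a countable set, the Riemann sums converge to $\int_0^1g$ and the continuity error vanishes, so the claim holds almost surely for continuous $g$.

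For piecewise continuous $g$ with finitely many discontinuities $t_1,\dots,t_k$, the same $k$-block argument still works, because $g$ is Riemann integrable. Given $\vare>0$, I would choose continuous $g_-\le g\le g_+$ with $\int(g_+-g_-)<\vare$; this is possible since $g$ is bounded and only finitely many small neighbourhoods of the $t_j$ need modification. Applying the continuous case to $g_\pm$ and sandwiching gives almost sure $\limsup$ and $\liminf$ bounds within $\vare$ of $\int_0^1g$. Taking $\vare=1/N$ over countably many $N$ yields a single null set.

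The main obstacle I expect is the sandwich construction at jump points, where $g_\pm$ must bound $g$, including its values exactly at $t_j$. I would handle this by taking $g_+$ to equal $\sup g$ on a $\eta$-neighbourhood of each $t_j$ and interpolating linearly to $g$ outside a $2\eta$-neighbourhood, with $g_-$ defined symmetrically. An alternative would be a Borel--Cantelli argument like \eqref{BClimsup*}, but the sandwich is cleaner.
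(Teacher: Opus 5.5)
Your argument is correct, but it takes a different route from the paper.

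The paper's route:
\begin{itemize}
\item It extends $(U_n)$ to a two-sided i.i.d.\ sequence and works with the stationary sequence $X_n=\sum_{j\geq1}U_{n+1-j}b^{-j}$. This sequence carries infinitely many past digits and is exactly uniform on $(0,1)$.
\item It applies Birkhoff's ergodic theorem for the Bernoulli shift to $g\circ h$.
\item It transfers the limit to $b^{-n}R_n$ using $0\leq X_n-b^{-n}R_n\leq 2b^{-n}$. A Borel--Cantelli argument, built on the comparison of canonical $b$-adic expansions from Appendix \ref{App_A}, shows that almost surely only finitely many intervals $[b^{-n}R_n,X_n]$ contain a discontinuity of $g$.
\end{itemize}

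Your route:
\begin{itemize}
\item You truncate to the last $k$ digits. This gives a $(k-1)$-dependent stationary sequence, to which the classical strong law applies along residue classes mod $k$.
\item You settle continuous $g$ by uniform continuity and Riemann sums.
\item You reach piecewise continuous $g$ by sandwiching it between continuous functions.
\end{itemize}

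What each buys: your route avoids ergodic theory, the two-sided extension and Appendix \ref{App_A} entirely. Since the last step only needs continuous $g_-\leq g\leq g_+$ with $\int_0^1(g_+-g_-)$ small, it proves the statement for every Riemann integrable $g$. The paper's route, in return, develops the Borel--Cantelli device for random $b$-adic intervals, which it reuses in the proof of Lemma \ref{Lem_quadratic_Z_almost}.

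Two small points of precision:
\begin{itemize}
\item Your remark that the $k$-block argument ``still works'' for piecewise continuous $g$ is not accurate as stated. The bound $|g(X_\ell)-g(W^{(k)}_\ell)|\leq\omega_g(b^{-k})$ fails near jumps, so it is the sandwich that does the work.
\item A literal linear interpolation between $\sup g$ and $g(t_j\pm2\eta)$ need not dominate $g$ on the transition zone. Take instead $g_+:=\lambda\sup g+(1-\lambda)g$, with a continuous cutoff $\lambda\in[0,1]$ equal to $1$ on the $\eta$-neighbourhoods of the $t_j$ and $0$ outside their $2\eta$-neighbourhoods, and define $g_-$ symmetrically. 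This $g_+$ is continuous, since $1-\lambda$ vanishes near each jump. It satisfies $g_+\geq g$ and $\int_0^1(g_+-g)=O(\eta)$.
\end{itemize}
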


\noindent{\bf Proof.}
{\sl Step 1.}
We extend the sequence $(U_n)_{n\in\NN}$ of independent and identically distributed random variables on $\{0,1,\ldots, b-1\}$ 
 (introduced before the formula \eqref{help33})
 to a two-sided independent and identically distributed sequence $(U_n)_{n\in\ZZ}$. 
Then the random variables
 \[
   X_n := \sum_{j=1}^\infty U_{n+1-j} b^{-j}\distre \sum_{j=1}^\infty U_j b^{-j}, \qquad n\in\ZZ,
 \]
 are uniformly distributed on the interval $(0,1)$, see, e.g., Lemma 3.20 in Kallenberg \cite{Kal} for the proof in case $b=2$, 
 which can be easily extended to an arbitrary $b\in\NN\setminus\{1\}$.

{\sl Step 2.}
Without loss of generality, we may assume that $(U_i)_{i\in\ZZ}$ is the canonical process defined on the probability space  
 \[
   (\Omega,\cA, \PP) := \Big( \{ 0,1,\ldots,b-1\}^{\ZZ}, (2^{\{0,1,\ldots,b-1\}})^{\otimes \ZZ}, \mu^{\otimes \ZZ} \Big),
 \]  
 where $2^{\{0,1,\ldots,b-1\}}$ is the power set of $\{0,1,\ldots,b-1\}$ and $\mu$ denotes the uniform distribution on $\{0,1,\ldots,b-1\}$.
Define the shift operator $\tau:\{0,1,\ldots,b-1\}^\ZZ \to \{0,1,\ldots,b-1\}^\ZZ$ by
 \[
   \tau( (\omega_n)_{n\in\ZZ} ):= (\omega_{n+1})_{n\in\ZZ},\qquad (\omega_n)_{n\in\ZZ}\in \Omega.
 \]
Then the measure preserving dynamical system $(\Omega,\cA, \PP,\tau)$ is mixing and hence ergodic, see, e.g.,
 Example 20.26 and Remark 20.27 in Klenke \cite{Kle}.

Let us introduce the random variable $h:\Omega\to [0,1]$ defined by 
 \[
   h((\omega_n)_{n\in\ZZ}) := \sum_{j=1}^\infty \omega_{1-j} b^{-j}, \qquad (\omega_n)_{n\in\ZZ}\in\Omega.
 \]
Then $h$ is indeed a random variable (measurable), since it is a pointwise limit of measurable mappings
 (following from the fact that the coordinate mappings $U_n$, $n\in\NN$, are measurable).
Let $\tau^0$ be the identity mapping on $\{ 0,1,\ldots,b-1\}^{\ZZ}$, $\tau^n$ be the $n$-fold composition of $\tau$
 with itself, and $\tau^{-n}$ be the inverse of $\tau^n$, where $n\in\NN$.
Using that $U_n(\omega) = \omega_n = (\tau^n(\omega))_{0} = U_0(\tau^n(\omega))$ for all $\omega\in\Omega$ and $n\in\ZZ$,
 we have that
 \[
   X_n(\omega) = \sum_{j=1}^\infty  \big(\tau^{n+1-j}(\omega)\big)_{0}  b^{-j}
               = \sum_{j=1}^\infty  \big(\tau^{n}(\omega)\big)_{1-j}  b^{-j}
          = (h\circ \tau^n)(\omega),\qquad \omega\in\Omega, \quad n\in\ZZ.
 \]
In particular, we have $X_0(\omega) = (h\circ \tau^0)(\omega) = h(\omega)$ for all $\omega\in\Omega$, i.e., $X_0=h$.

{\sl Step 3.}
Let $g:[0,1]\to\RR$ be a piecewise continuous function.
Since $g$ is a bounded and Borel measurable (see Lemma \ref{Lemma_piececont_bound_meas}), by applying Birkhoff's ergodic theorem (see, e.g., 
 Klenke \cite[Theorem 20.14]{Kle}) to the integrable random variable $g\circ h:\Omega\to \RR$, we obtain that 
 \begin{align}\label{Birkhoff-appl}
  \frac{1}{n}\sum_{\ell=1}^n g(X_\ell(\omega)) 
      = \frac{1}{n}\sum_{\ell=1}^n g( (h\circ \tau^\ell)(\omega) )
      = \frac{1}{n}\sum_{\ell=1}^n ((g\circ h)\circ \tau^\ell)(\omega) 
      \to \EE(g\circ h)   
 \end{align}
 as $n\to\infty$ for $\PP$-almost every $\omega\in\Omega$.
By Steps 1 and 2, we have $h=X_0$ and $X_0$ is uniformly distributed on the interval $(0,1)$. Hence the limit in \eqref{Birkhoff-appl} takes the form
 \[
 \EE(g\circ h) = \EE(g(X_0)) = \int_0^1 g(x)\,\dd x.
 \]

{\sl Step 4.}
We show that, for any piecewise continuous function $g:[0,1]\to\RR$, 
 \begin{align}\label{help_Han_1}
  \frac{1}{n}\sum_{\ell=1}^n g(b^{-\ell}R_\ell(\omega)) 
    - \frac{1}{n}\sum_{\ell=1}^n g(X_\ell(\omega)) 
   \to 0   \qquad \text{as $n\to\infty$}   
 \end{align}
 for $\PP$-almost every $\omega\in\Omega$, which, by Step 3, yields the statement.
Using \eqref{help33}, the definition of $X_n$ and that $U_i\in\{0,\ldots,b-1\}$, $i\in\ZZ$, we get for all $n\in\NN$
\begin{align*}
X_n & =\sum_{j=1}^\infty U_{n+1-j} b^{-j}\geq\sum_{j=1}^n U_{n+1-j} b^{-j}=b^{-n}\sum_{i=1}^n U_{i} b^{i-1}=b^{-n}R_n\geq0,
\end{align*}
and
 \begin{align}\label{XRineq}
  0\leq  X_n-b^{-n}R_n  &= \sum_{j=n+1}^\infty U_{n+1-j} b^{-j}
     \leq \sum_{j=n+1}^\infty b^{-(j-1)}
     = b^{-n} \frac{b}{b-1}
     \leq 2b^{-n}.
 \end{align}
Let $I_n:=[b^{-n}R_n,X_n]$, $n\in\NN$, and for some $k\in\ZZ_+$, denote by $0\leq t_1<\cdots<t_k\leq1$ the at most finitely many exceptional points, where $g$ is not continuous. We will show that
\begin{equation}\label{BClimsup}
 \PP\left(\limsup_{n\to\infty}\{t_j\in I_n\}\right)=0\quad\text{ for all }j\in\{1,\ldots,k\},
\end{equation}
 and consequently, $\PP\left(\liminf_{n\to\infty}\{t_j\not\in I_n\}\right)=1$ for all $j\in\{1,\ldots k\}$,
 which yields that $\PP$-almost surely the number of intervals $I_n$, $n\in\NN$, that contain at least one 
 of the exceptional points $0\leq t_1<\cdots<t_k\leq 1$ is finite. 
To prove \eqref{BClimsup}, by the Borel-Cantelli lemma, 
 it is sufficient to show that $\sum_{n\in\NN}\PP(t_j\in I_n)<\infty$ for all $j\in\{1,\ldots k\}$.
 Let $j\in\{1,\ldots k\}$ be fixed.
We have $$\PP(t_j\in I_n) = \PP(b^{-n}R_n\leq t_j\leq X_n),\qquad n\in\NN.$$
If $t_j=1$, then $\PP(b^{-n}R_n\leq t_j\leq X_n)=\PP(X_n=1)=0$, $n\in\NN$, since $X_n$ is uniformly distributed on $(0,1)$ by Step 1. 
Now let $t_j\in[0,1)$ and $\sum_{i=1}^\infty u_ib^i$ be the unique canonical (standard) $b$-adic representation of $t_j$, where $u_i\in\{0,\ldots,b-1\}$, $i\in\NN$, see, e.g., Theorem B.2.4 in Vakil \cite{Vak} (or Theorem \ref{Thm_Vakil}).
A non-canonical (non-standard) version is given if there exists an $n\in\NN$ such that $u_i= b-1$ for all $i\geq n$.
Note that 
 \[
 \PP(X_n\text{ is not given in canonical form})\leq\sum_{\ell=1}^\infty \PP(U_{n+1-j}=b-1\text{ for all }j\geq\ell)=0,\qquad n\in\NN,
 \]
 since $\PP(U_{n+1-j}=b-1\text{ for all }j\geq\ell)=0$ for all $n\in\NN$ and $\ell\in\NN$
 due to the fact that $U_i$, $i\in\NN$, are independent and uniformly distributed on $\{0,\ldots,b-1\}$.
Using the comparison of two canonical $b$-adic representations of real numbers in $[0,1)$ (see Lemma \ref{Lem_comp_canonical}),
 we get 
\begin{align*}
& \{t_j\leq X_n\text{ and }X_n\text{ is given in canonical form}\} \\
& \qquad = \{U_{n+1-i}=u_i,\, i\in\NN\}
          \cup\bigcup_{m\in\NN}\Big\{U_{n+1-i}=u_i,\, i\in\{1,\ldots,m-1\}\text{ and }U_{n+1-m}>u_m\Big\}
\end{align*}
for all $n\in\NN$.
Since $b^{-n}R_n=\sum_{i=1}^nU_{n+1-i}b^{-i}$, $n\in\NN$, is a canonical form, similarly, for all $n\in\NN$, we get
\begin{align*}
\{b^{-n}R_n\leq t_j\} & =  \{U_{n+1-i}=u_i,\, i\in\{1,\ldots,n\} \text{ and}\; u_i=0,\, i\geq n+1 \}\\
                      & \phantom{=}\cup\bigcup_{m=1}^n\Big\{U_{n+1-i}=u_i,\, i\in\{1,\ldots,m-1\}\text{ and }U_{n+1-m}<u_m\Big\}\\
                      & \phantom{=}\cup \bigcup_{m=n+1}^\infty \Big\{U_{n+1-i}=u_i,\, i\in\{1,\ldots,n\},
                                    u_i=0, \, i\in\{n+1,\ldots,m-1\} \text{ and } 0< u_m\Big\}\\
                      & = \{U_{n+1-i}=u_i,\, i\in\{1,\ldots,n\} \}\\              
                      & \phantom{=}\cup\bigcup_{m=1}^n\Big\{U_{n+1-i}=u_i,\, i\in\{1,\ldots,m-1\}\text{ and }U_{n+1-m}<u_m\Big\}.
\end{align*}
Hence, for the intersection, we get
\begin{align*}
& \{b^{-n}R_n\leq t_j\leq X_n\text{ and }X_n\text{ is given in canonical form}\} \\
& \qquad \subseteq \{U_{n+1-i}=u_i,\, i\in\NN\} \cup\bigcup_{m=n+1}^\infty\Big\{U_{n+1-i}=u_i,\, i\in\{1,\ldots,m-1\}\text{ and }U_{n+1-m}>u_m\Big\}
\end{align*}
 for all $n\in\NN$.
Using that $\PP(U_{n+1-i}=u_i,\, i\in\NN)=0$, $n\in\NN$, this shows that 
\begin{align*}
\PP(t_j\in I_n) & = \PP(b^{-n}R_n\leq t_j\leq X_n)
  \leq \sum_{m=n+1}^\infty \PP( U_{n+1-i}=u_i,\, i\in\{1,\ldots,m-1\} )\\
  &= \sum_{m=n+1}^\infty b^{-(m-1)}=b^{-n}\,\frac1{1-b^{-1}},
  \qquad n\in\NN,
\end{align*}
 yielding that $\sum_{n\in\NN} \PP(t_j\in I_n)<\infty$, as desired.

As it was explained, from \eqref{BClimsup} it follows that $\PP$-almost surely the number of intervals $I_n$, $n\in\NN$, 
 that contain at least one of the exceptional points $0\leq t_1<\cdots<t_k\leq1$ is finite. 
Let $t_0:=0$ and $t_{k+1}:=1$. 
Then $\PP$-almost surely there exists an $N\in\NN$ such that  
 \begin{align}\label{help38}
  [b^{-n}R_n,X_n] \subseteq [0,t_1) \cup \bigcup_{j=1}^{k-1} (t_j,t_{j+1})\cup (t_k,1],
  \qquad n\geq N,
 \end{align}
 which yields the unique existence of an index $j_n\in\{0,\ldots, k\}$, $n\geq N$, satisfying
 $$[b^{-n}R_n,X_n]\subseteq\begin{cases}
[0,t_{1}) & \text{ if }j_n=0,\\
(t_{j_n},t_{j_{n}+1}) & \text{ if }j_n\in\{1,\ldots,k-1\},\\
(t_k,1] & \text{ if }j_n=k.\\
\end{cases}$$
For $j\in\{0,\ldots, k\}$, let the function $g_j:[t_j,t_{j+1}]\to\RR$ be given by
$$g_j(x)=\begin{cases}
g(x) & \text{ if }x\in(t_j,t_{j+1}),\\
g(t_j+) & \text{ if }x=t_j,\\
g(t_{j+1}-) & \text{ if }x=t_{j+1}.\\
\end{cases}$$
Then $g_j$, $j\in\{0,\ldots, k\}$, is uniformly continuous.
Hence, for each $j\in\{0,\ldots, k\}$ and all $\vare>0$, there exists a $\delta_j>0$ such that 
 \[
   \vert g_j(x) - g_j(y)\vert <\vare \qquad \text{for $\vert x-y\vert<\delta_j$, $x,y\in[t_j,t_{j+1}]$.}
 \]
Let $\delta:=\min_{j\in\{0,\ldots,k\}} \delta_j$.
Since $\vert b^{-\ell}R_\ell - X_\ell\vert \leq 2b^{-\ell}$, $\ell\in\NN$ (see \eqref{XRineq}), using \eqref{help38},
 we have that $\PP$-almost surely for all $\vare>0$,
 there exists an $\ell_\vare\geq N$, $\ell_\vare\in\NN$ such that $\vert b^{-\ell}R_\ell - X_\ell\vert < \delta$ for $\ell\geq \ell_\vare$, and hence
 \[
 \vert g_j(b^{-\ell}R_\ell) - g_j(X_\ell) \vert <\vare, \qquad \ell\geq \ell_\vare, \quad \ell\in\NN,\quad j\in\{0,\ldots,\ell\}.
 \] 
Since $b^{-\ell}R_\ell$ and $X_\ell$ are not exceptional points for $\ell\geq \ell_\vare$ $\PP$-almost surely,
 and $g_j$ and $g$ coincide on $(t_j,t_{j+1})$, we have that 
 \[
   \vert g(b^{-\ell}R_\ell) - g(X_\ell) \vert <\vare, \qquad \ell\geq \ell_\vare, \quad \ell\in\NN.
 \] 
Using that $g$ is bounded (see Lemma \ref{Lemma_piececont_bound_meas}), there exists a $K>0$ such that $\vert g(x)\vert\leq K$ for all $x\in[0,1]$.
Therefore, for all $\vare>0$ and $n\geq \ell_\vare$ we get that  $\PP$-almost surely
 \begin{align*}
 &\left\vert \frac{1}{n}\sum_{\ell=1}^n g(b^{-\ell}R_\ell) 
    - \frac{1}{n}\sum_{\ell=1}^n g(X_\ell) \right\vert\\
 &\qquad \leq \frac{1}{n}\sum_{\ell=1}^{\ell_\vare-1} \vert  g(b^{-\ell}R_\ell) - g(X_\ell) \vert    
      + \frac{1}{n}\sum_{\ell=\ell_\vare}^n \vert  g(b^{-\ell}R_\ell) - g(X_\ell) \vert \\   
 &\qquad \leq \frac{(\ell_\vare-1)K}{n}
      + \frac{1}{n}\sum_{\ell=\ell_\vare}^n \vert  g(b^{-\ell}R_\ell) - g(X_\ell) \vert 
 \leq \frac{(\ell_\vare-1)K}{n} + \vare.    
 \end{align*}
Since $\frac{(\ell_\vare-1)K}{n}\to0$ as $n\to\infty$ and $\vare>0$ is arbitrary,
 this proves \eqref{help_Han_1}, as desired.
\proofend

\end{document}